\definecolor{vegasgold}{rgb}{0.77, 0.7, 0.35}
\definecolor{darkgoldenrod}{rgb}{0.72, 0.53, 0.04}
\definecolor{gold(metallic)}{rgb}{0.83, 0.69, 0.22}
\tikzset{every loop/.style={min distance=10mm,looseness=10}}
\DeclareFontFamily{U}{wncy}{}
\DeclareFontShape{U}{wncy}{m}{n}{<->wncyr10}{}
\DeclareSymbolFont{mcy}{U}{wncy}{m}{n}
\DeclareMathSymbol{\Sh}{\mathord}{mcy}{"58}
\newtheorem{theorem}{Theorem}[section]
\newtheorem{proposition}[theorem]{Proposition}
\newtheorem{definition}[theorem]{Definition}
\newtheorem{assumption}[theorem]{Assumption}
\numberwithin{equation}{section}
\newtheorem{lthm}{Theorem} 
\theoremstyle{remark}
\newtheorem{remark}[theorem]{Remark}
\newtheorem{example}[theorem]{Example}
\newcommand{\Y}{\mathcal{Y}}
\newcommand{\G}{\mathfrak{G}}
\newcommand{\Z}{\mathbb{Z}}
\newcommand{\Q}{\mathbb{Q}}
\newcommand{\cY}{\mathcal{Y}}
\newcommand{\op}[1]{\operatorname{#1}}
\newcommand\mtx[4] { \left( {\begin{array}{cc}
 #1 & #2 \\
 #3 & #4 \\
 \end{array} } \right)}
\begin{document}
\title[An analogue of Kida's formula in graph theory]{An analogue of Kida's formula in graph theory}

\author[A.~Ray]{Anwesh Ray}
\address[Ray]{Chennai Mathematical Institute, H1, SIPCOT IT Park, Kelambakkam, Siruseri, Tamil Nadu 603103, India}
\email{ar2222@cornell.edu}

\author[D.~Valli\`{e}res]{Daniel Valli\`{e}res}
\address{Daniel Valli\`{e}res\newline Mathematics and Statistics Department, California State University, Chico, CA 95929, USA}
\email{dvallieres@csuchico.edu}

\begin{abstract}
Let $\ell$ be a rational prime and let $p:Y\rightarrow X$ be a Galois cover of finite graphs whose Galois group is a finite $\ell$-group. Consider a $\mathbb{Z}_{\ell}$-tower above $X$ and its pullback along $p$. Assuming that all the graphs in the pullback are connected, one obtains a $\mathbb{Z}_{\ell}$-tower above $Y$. 
We prove a formula relating the Iwasawa $\lambda$-invariant of the $\mathbb{Z}_{\ell}$-tower above $X$ to the Iwasawa $\lambda$-invariant of the pullback.  This formula is analogous to Kida's formula in classical Iwasawa theory. We also study structural properties of certain noncommutative pro-$\ell$ towers of graphs, based on an analogy with classical results of Cuoco on the growth of Iwasawa invariants in $\Z_\ell^2$-extensions of number fields. Our investigations are illustrated by explicit examples. 
\end{abstract}

\subjclass[2020]{Primary: 05C25, 11R23; Secondary: 11Z05} 
\date{\today} 
\keywords{Graph theory, Iwasawa invariants, Kida's formula, the Riemann-Hurwitz formula, noncommutative towers of graphs}

\maketitle
\tableofcontents 
 
\section{Introduction}
Let $f:\mathcal{S}_{1} \rightarrow \mathcal{S}_{2}$ be a branched cover of compact Riemann surfaces of degree $d$.  The Riemann-Hurwitz formula stipulates that
\begin{equation} \label{RH}
\chi(\mathcal{S}_{1}) = d\chi(\mathcal{S}_{2}) - \sum_{P \in \mathcal{S}_{1}}(e_{P} - 1), 
\end{equation}
where $\chi$ denotes the Euler characteristic of a compact Riemann surface, and $e_{P}$ is the ramification index at the point $P$. Note that the above sum is finite since $e_P=1$ for all but finitely many points $P$. If the cover is unramified, then \eqref{RH} simply becomes
\begin{equation} \label{RH_unramified}
\chi(\mathcal{S}_{1}) = d\chi(\mathcal{S}_{2}). 
\end{equation}
Using the relation $\chi(\mathcal{S}) = 2 - 2g(\mathcal{S})$, where $g(\mathcal{S})$ denotes the genus of $\mathcal{S}$, \eqref{RH} can also be written as
\begin{equation} \label{RH_genus}
2g(\mathcal{S}_{1}) - 2 = d(2g(\mathcal{S}_{2}) - 2) +  \sum_{P \in \mathcal{S}_{1}}(e_{P} - 1).
\end{equation}

\par Kida proved an analogous formula in the context of Iwasawa theory, which we now explain. Throughout, $\ell$ will be a rational prime,  and let $\mu_{\ell^\infty}$ be the $\ell$-power roots of unity in $\overline{\Q} \subseteq \mathbb{C}$. Let $K$ be a $CM$ number field. The cyclotomic $\Z_\ell$-extension of $K$ is the unique $\Z_\ell$-extension contained in $K(\mu_{\ell^\infty})$ and is denoted by $K_\infty$. Associated to $K_{\infty}$ is a tower of $CM$-fields
$$K = K_{0} \subseteq K_{1} \subseteq \ldots \subseteq K_{n} \subseteq \ldots $$
for which $K_{n}/K$ is Galois with Galois group isomorphic to $\mathbb{Z}/\ell^{n}\mathbb{Z}$. Let $K_n^+$ be the maximal totally real subfield of $K_n$ and $h_n^+$ be the class number of $K_n^+$. Denote by $h_n$, the class number of $K_n$ and let $h_n^-:=h_n/h_n^+$ denote the relative class number which is known to be an integer. Iwasawa showed (see \cite{Iwasawa:1959} and \cite{iwasawa1973zl}) that there exist nonnegative integers $\mu^{-} = \mu_{K}^{-},\lambda^{-} =  \lambda_{K}^{-}$ and an integer $\nu^{-} = \nu_{K}^{-}$ such that for $n$ large, one has
\begin{equation} \label{Iwasawa}
{\rm ord}_{\ell}(h_{n}^{-}) = \mu^{-}  \ell^{n} + \lambda^{-} n + \nu^{-}, 
\end{equation}
where ${\rm ord}_{\ell}$ is the usual $\ell$-adic valuation on $\mathbb{Q}$.  Let now $E/F$ be a Galois extension of $CM$-fields for which ${\rm Gal}(E/F)$ is an $\ell$-group, and assume for simplicity that $\ell > 2$ and that $F$ contains the $\ell$-th roots of unity.  Kida showed in \cite{Kida:1980} that if $\mu_{F}^{-} = 0$, then $\mu_{E}^{-} = 0$, and 
\begin{equation} \label{kida}
2\lambda_{E}^{-} - 2 = [E_{\infty}:F_{\infty}](2 \lambda_{F}^{-} - 2) + \sum_{w}(e_{w} - 1),
\end{equation}
where the sum is over all nonarchimedean places of $E_{\infty}$ that split completely in $E_{\infty}/E_{\infty}^{+}$ and that do not lie above $\ell$.  Several authors pointed out the striking similarity between Kida's formula (\ref{kida}) and the Riemann-Hurwitz formula (\ref{RH_genus}).

In this paper, we study analogies between Galois theoretic objects in arithmetic and their topological counterparts in graph theory. The properties of the Dedekind zeta function of a number field correlate to the arithmetic distribution properties of its primes. On the other hand, the Ihara zeta function in graph theory behaves much like the Dedekind zeta function, and has many interesting combinatorial applications, cf. \cite{Terras:2011}. There are also many analogies between graphs and algebraic curves. For instance, \cite{Baker/Norine:2009} establishes an analogue of the Abel-Jacobi map and the Riemann-Roch theorem in graph theory. There is a natural notion of the class group in graph theory (known in the literature under various names such as the critical group, the sandpile group, the Jacobian group, the Picard group, etc), which exhibits properties similar to its number theoretic and geometric counterparts. From an Iwasawa theoretic viewpoint, it is of natural interest to study the behavior of class numbers in various towers of graphs. Recently, a result similar to Iwasawa's theorem (\ref{Iwasawa}) above has been observed in graph theory. Let $X$ be a connected graph (with loops and parallel edges allowed), and consider a $\mathbb{Z}_{\ell}$-tower above $X$.  This means that we are given a sequence of covers of connected graphs
\begin{equation} \label{too}
X = X_{0} \leftarrow X_{1} \leftarrow \ldots \leftarrow X_{n} \leftarrow \ldots
\end{equation}
for which the cover $X_{n}/X$ obtained by composing the covers $X_{n} \rightarrow \ldots \rightarrow X_{1} \rightarrow X$ is Galois with group of covering transformations isomorphic to $\mathbb{Z}/\ell^{n}\mathbb{Z}$. We note that after the announcement of this article on the arXiv, Gambheera and the second author \cite{GambheeraVallieres} developed an Iwasawa theory for branched \(\mathbb{Z}_\ell\)-towers of graphs. In contrast, the present work focuses exclusively on unbranched Galois covers.  Then, \cite[Theorem 6.1]{mcgownvallieresIII} shows that there exist nonnegative integers $\mu,\lambda$ and an integer $\nu$ such that for $n$ large, one has
$${\rm ord}_{\ell}(\kappa_{n}) = \mu \ell^{n} + \lambda n + \nu, $$
where $\kappa_{n}$ is the class number (i.e., number of spanning trees) of the graph $X_{n}$. The approach is based on studying the properties of a suitable analogue of the $\ell$-adic $L$-function in graph theory, obtained from the $\ell$-adic interpolation of the special values of the Artin-Ihara $L$-functions. For a more algebraic approach to this result, one is referred to \cite{Gonet:2021a, Gonet:2022}.  We shall refer to $\mu$ and $\lambda$ as the Iwasawa invariants of the tower (\ref{too}).  Furthermore, it is known (see for instance \cite[page 55]{Sunada:2013}) that if $Y \rightarrow X$ is a cover of connected graphs of degree $d$, then one has
\begin{equation} \label{RH_graph}
\chi(Y) = d \chi(X),
\end{equation}
where $\chi$ here denotes the Euler characteristic of a graph.  It becomes then natural to study whether an analogue of Kida's formula holds true in graph theory as well.  Our main result is the following theorem.
\begin{lthm}[{Theorem \ref{main}}]\label{thmA}
Let $X$ be a connected graph for which $\chi(X) \neq 0$ and consider an unbranched $\mathbb{Z}_{\ell}$-tower
\begin{equation} \label{to}
X = X_{0} \leftarrow X_{1} \leftarrow \ldots \leftarrow X_{n} \leftarrow \ldots
\end{equation}
Let $p:Y \rightarrow X$ be a Galois cover of graphs whose Galois group is an $\ell$-group and pull back the tower (\ref{to}) along $p$ to obtain another $\mathbb{Z}_{\ell}$-tower 
\begin{equation} \label{to_pb}
Y \leftarrow Y \times_{X}X_{1} \leftarrow Y \times_{X}X_{2} \leftarrow \ldots \leftarrow Y \times_{X}X_{n} \leftarrow \ldots 
\end{equation}
Assume that all the graphs in both towers are connected.  Further, let us denote the Iwasawa invariants of the tower (\ref{to}) by $\mu_{X}$ and $\lambda_{X}$ and the Iwasawa invariants of the tower (\ref{to_pb}) by $\mu_{Y}$ and $\lambda_{Y}$.  Then $\mu_{X} = 0$ if and only if $\mu_{Y} = 0$ in which case
\begin{equation} \label{kida_gr}
\lambda_{Y}+1 = [Y:X] (\lambda_{X} +1), 
\end{equation}
where $[Y:X]$ is the degree of the cover $Y/X$.
\end{lthm}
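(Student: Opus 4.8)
The plan is to reduce the statement about the growth of the spanning-tree numbers $\kappa(X_n),\kappa(Y_n)$ to a comparison of the $\lambda$-invariants of two determinant power series, and then to exploit the freeness of the $G$-action on the pulled-back graphs, where $G=\Gal(Y/X)$. First I would recall, for a connected graph $Z$ with $\chi(Z)\neq 0$ carrying a $\mathbb{Z}_\ell$-tower with group $\varprojlim C_n$, $C_n=\mathbb{Z}/\ell^n\mathbb{Z}$, the determinant (Artin–Ihara) description of the spanning-tree numbers: there is a power series $f_Z(T)\in\Lambda=\mathbb{Z}_\ell\llbracket T\rrbracket$, equal up to a unit to the determinant of the $\Lambda$-adic twisted Laplacian $\mathcal{L}_Z$ of the tower, whose specialisations $f_Z(\zeta-1)=\det\mathcal{L}_Z(\psi)$ recover the twisted Laplacian determinants (with $\zeta=\psi(\gamma)$ an $\ell$-power root of unity). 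Since $Z$ is connected the untwisted Laplacian is singular, so $f_Z(0)=0$, and the hypothesis $\chi(Z)\neq 0$ guarantees that $T$ divides $f_Z$ exactly once. Feeding the product formula $\op{ord}_\ell\kappa(Z_n)=\sum_{\zeta\neq 1,\ \zeta^{\ell^n}=1}\op{ord}_\ell f_Z(\zeta-1)+(\text{elementary terms})$ into the standard Weierstrass-preparation computation then identifies $\mu_Z=\mu(f_Z)$ and $\lambda_Z+1=\lambda(f_Z)$. Thus the quantities $\lambda_Y+1$ and $\lambda_X+1$ appearing in \eqref{kida_gr} are literally $\lambda(f_Y)$ and $\lambda(f_X)$, the ``$+1$'' being the simple zero contributed by the trivial character.

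Next I would factor $f_Y$ along $G$. For each $n$ the pulled-back graph $Y_n:=Y\times_X X_n$ is a connected Galois cover of $X$ with group $\Gamma_n=G\times C_n$, and the inductivity of Artin–Ihara $L$-functions along $Y\to X$ gives $\det\mathcal{L}_Y(\psi)\doteq\prod_{\rho\in\op{Irr}(G)}\det\mathcal{L}_X(\rho\boxtimes\psi)^{\deg\rho}$, because $\op{Ind}_{C_n}^{\Gamma_n}\psi\cong\bigoplus_{\rho}(\rho\boxtimes\psi)^{\oplus\deg\rho}$. Interpolating in $T$ yields $f_Y\doteq\prod_{\rho\in\op{Irr}(G)}f_{X,\rho}^{\deg\rho}$ in $\Lambda$, where $f_{X,\rho}$ is the determinant series of the $\rho$-twisted $\Lambda$-adic Laplacian over $X$ and $f_{X,\mathbf{1}}=f_X$. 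Additivity of the invariants gives $\mu(f_Y)=\sum_\rho\deg\rho\cdot\mu(f_{X,\rho})$ and $\lambda(f_Y)=\sum_\rho\deg\rho\cdot\lambda(f_{X,\rho})$. The equivalence $\mu_X=0\Leftrightarrow\mu_Y=0$ now drops out cleanly: since $G$ is an $\ell$-group every irreducible $\mathbb{F}_\ell$-representation of $G$ is trivial, so $\mathcal{L}_X(\rho)$ is block-unipotent modulo $\ell$ and $f_{X,\rho}\equiv f_X^{\deg\rho}\pmod\ell$; hence $\mu(f_{X,\rho})=0\Leftrightarrow\mu_X=0$ for every $\rho$, and as all summands are non-negative this forces $\mu_Y=0\Leftrightarrow\mu_X=0$.

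For the $\lambda$-identity, since $\sum_\rho(\deg\rho)^2=|G|=[Y:X]$ it suffices to prove
\[
\lambda(f_{X,\rho})=\deg\rho\cdot\lambda(f_X)\qquad(\rho\in\op{Irr}(G)),
\]
for then $\lambda(f_Y)=\sum_\rho(\deg\rho)^2\lambda(f_X)=[Y:X]\,\lambda(f_X)$, i.e. $\lambda_Y+1=[Y:X](\lambda_X+1)$. Equivalently, writing $N_Z:=\coker(\mathcal{L}_Z)$ for the $\Lambda$-module cut out by $f_Z$ (so that, once its $\mu$-invariant vanishes, $N_Z$ is $\mathbb{Z}_\ell$-free of rank $\lambda(f_Z)=\lambda_Z+1$), the displayed identity says precisely that $N_Y\otimes_{\mathbb{Z}_\ell}\mathbb{Q}_\ell$ is a \emph{free} $\mathbb{Q}_\ell[G]$-module.

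The main obstacle is exactly this freeness, which I would establish from the geometry of the cover. The decisive point---and the graph-theoretic shadow of the representation-theoretic heart of Kida's formula---is that a connected Galois cover of graphs is unramified, so $G$ acts freely on the vertex and edge sets of each $Y_n$. Hence $\mathbb{Z}_\ell[V(Y_n)]$ and $\mathbb{Z}_\ell[E(Y_n)]$ are free $\mathbb{Z}_\ell[G]$-modules, and the equivariant Euler-characteristic identity in the representation ring of $G$ reads $[H_0(Y_n;\mathbb{Q}_\ell)]-[H_1(Y_n;\mathbb{Q}_\ell)]=\chi(X_n)\,[\mathbb{Q}_\ell[G]]$; since $H_0=\mathbf{1}$ and $-\chi(X_n)\geq 0$, this forces $H_1(Y_n;\mathbb{Q}_\ell)\cong\mathbf{1}\oplus\mathbb{Q}_\ell[G]^{\oplus(-\chi(X_n))}$ as $G$-representations. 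The same freeness makes $\mathcal{L}_Y$ a $\Lambda[G]$-linear map between free $\Lambda[G]$-modules with $(N_Y)_G\cong N_X$ (the Laplacian $\mathcal{L}_X$ is the $G$-coinvariant quotient of $\mathcal{L}_Y$), so that once $N_Y\otimes\mathbb{Q}_\ell$ is known to be $\mathbb{Q}_\ell[G]$-free its rank is pinned down by $\dim_{\mathbb{Q}_\ell}(N_Y\otimes\mathbb{Q}_\ell)_G=\dim_{\mathbb{Q}_\ell}(N_X\otimes\mathbb{Q}_\ell)=\lambda(f_X)=\lambda_X+1$. The delicate part, where $\chi(X)\neq0$ and $\mu=0$ are genuinely used, is to carry the finite-level induced decompositions through the inverse limit over the tower without losing or gaining $\mathbb{Z}_\ell$-rank, so that the rational $\mathbb{Q}_\ell[G]$-freeness holds on the nose; this is what removes any ramification correction and yields the clean multiplicative relation, in exact parallel with the ramification-free graph Riemann–Hurwitz formula \eqref{RH_graph}.
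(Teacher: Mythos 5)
Your overall strategy---identifying $\mu_Z$, $\lambda_Z$ with $\mu(f_Z)$, $\lambda(f_Z)-1$, factoring $f_{Y}=\prod_{\rho\in\op{Irr}(G)}f_{X,\rho}^{\deg\rho}$ via the induced-character decomposition and Artin formalism, and comparing each $f_{X,\rho}$ with $f_{X}$ modulo the maximal ideal using that $G$ is an $\ell$-group---is exactly the Sinnott-style route the paper takes. The only structural difference is that the paper first uses a central subgroup of order $\ell$ to reduce, by induction, to a cyclic cover of degree $\ell$, so that only abelian characters and the abelian Artin--Ihara theory are ever needed; your version works with all irreducible representations of $G$ at once, which is fine in principle but requires the non-abelian Artin--Ihara formalism (and the Kolchin-type unipotence of $\bar{\rho}$) that the paper deliberately avoids. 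Up through the $\mu$-equivalence your argument is correct.

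The genuine problem is your last step. Having reduced \eqref{kida_gr} to the identity $\lambda(f_{X,\rho})=\deg\rho\cdot\lambda(f_{X})$, you recast it as $\mathbb{Q}_\ell[G]$-freeness of $N_{Y}\otimes\mathbb{Q}_\ell$ and propose to prove that freeness by an equivariant Chevalley--Weil argument at finite levels followed by a passage to the limit; you yourself flag the limit step (``without losing or gaining $\mathbb{Z}_\ell$-rank'') as unresolved, and nothing in your sketch supplies it: the $G$-structure of $H_1(Y_n;\mathbb{Q}_\ell)$ at finite level does not by itself control the Iwasawa module $N_{Y}$, and the claim $(N_{Y})_G\cong N_{X}$ is also left unproved. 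So as written the proof is incomplete. The irony is that the gap is closed by a fact you already used in the $\mu$-step: when $\mu(Q)=0$, the invariant $\lambda(Q)$ equals the $T$-adic valuation of the reduction of $Q$ modulo the maximal ideal of $\mathfrak{o}$. Hence your congruence $f_{X,\rho}\equiv f_{X}^{\deg\rho}$ immediately gives $\lambda(f_{X,\rho})=\deg\rho\cdot\lambda(f_{X})$ once $\mu_{X}=0$, and then $\lambda(f_{Y})=\sum_{\rho}\deg\rho\cdot\lambda(f_{X,\rho})=\bigl(\sum_{\rho}(\deg\rho)^2\bigr)\lambda(f_{X})=[Y:X]\,\lambda(f_{X})$, which is \eqref{kida_gr}; this is precisely how the paper concludes (in the cyclic case, where every $\deg\rho=1$). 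The freeness of $N_{Y}\otimes\mathbb{Q}_\ell$ as a $\mathbb{Q}_\ell[G]$-module is then a corollary, not a needed input. One small erratum: your claim that $\chi(X)\neq 0$ forces $T$ to divide $f_{X}$ exactly once is false (in the paper's second example $f_{X,\alpha}(T)=-3T^{2}+3T^{3}-\cdots$); the shift $\lambda_{X}=\lambda(f_{X})-1$ comes from the factor $\ell^{n}$ in $\ell^{n}\kappa_{n}=\kappa_{X}\prod_{\psi\neq\psi_{0}}f_{X}(t_{\psi})$, not from a simple zero at $T=0$. This slip does not affect the rest of the argument.
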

Theorem \ref{thmA} can be viewed as an analogue of Kida's formula (\ref{kida}), but in graph theory.  We are aware of three different proofs of Kida's formula:  the original one by Kida in \cite{Kida:1980} using genus theory, the one by Iwasawa in \cite{Iwasawa:1981} where he proves a result analogous to the Chevalley-Weil formula for a branched Galois cover of compact Riemann surfaces, and the one by Sinnott in \cite{Sinnott:1984} using $\ell$-adic $L$-functions.  Our approach here to proving Theorem \ref{thmA} is inspired by \cite{Sinnott:1984}. The results in this article only apply to unbranched covers and have been recently generalized to branched covers in the setting of \cite{GambheeraVallieres}, by Kataoka \cite{Kataokaramifiedkida}. This proof however uses different methods from ours.

\par While the result certainly strengthens the analogy between classical Iwasawa theory and the Iwasawa theory of graphs, it is natural to ask whether this is the sole motivation or if there are other applications in mind. One possible motivation is that Kida’s formula in number theory plays a crucial role in understanding the behavior of Iwasawa invariants in larger Galois extensions. In \S \ref{section 5}, we present an application of Theorem \ref{main} to the study of certain growth questions in noncommutative pro-$\ell$ towers of graphs. Our investigations are inspired by a result of Cuoco \cite{cuoco1980growth}, who studied the growth of Iwasawa invariants in a $\Z_\ell^2$-tower over a number field $K$. This provides us insight into the structural properties of certain non-commutative pro-$\ell$ examples. These computations suggest that the Kida formula could serve as a stepping stone to further investigations in noncommutative Iwasawa theory. The authors believe that analogous investigations in the Iwasawa theory of graphs will lead to significant developments in combinatorics.
\begin{remark}
In \cite{Baker/Norine:2009}, Baker and Norine prove that if $\pi:Y\rightarrow X$ is a branched cover of graphs of degree $d$ in the sense of \cite[Notes 5.6]{Sunada:2013}, then
\begin{equation} \label{RH_graph_ram}
\chi(Y) = d\chi(X)  - \sum_{w \in V_{Y}}(e_{w} - 1),
\end{equation}
where $e_{w}$ is the ramification index of the vertex $w$.  In the present paper (as well as in \cite{Vallieres:2021, mcgownvallieresII, mcgownvallieresIII}, \cite{lei2022non}, \cite{Sage/Vallieres:2022}, \cite{DLRV}, and \cite{Gonet:2021a, Gonet:2022}), we restrict ourselves to unramified covers; therefore, the Euler characteristic satisfies the simpler formula (\ref{RH_graph}), and this might explain the lack of ramification indices in (\ref{kida_gr}).
\end{remark}
\emph{Organization:} In \S \ref{preliminaries}, we gather together a few previous results that will be used later on in the article.  We remind the reader about graphs and covers of graphs in \S \ref{gr}, about Artin-Ihara $L$-functions in \S \ref{l_function}, about voltage assignments in \S \ref{vo_assign}, and about the graph theoretical analogue of Iwasawa's theorem (\ref{Iwasawa}) in \S \ref{Iw_thm}.  In \S \ref{pullb}, we explain the notion of a pullback in the category of graphs and some of its properties.  We reinterpret pullbacks in terms of voltage assignments in \S \ref{pull_b_va}.  We present our main result in \S \ref{ki_main}, which we illustrate through examples in \S \ref{examples}. We present an application to the study of the growth of Iwasawa invariants in noncommutative pro-$\ell$ towers in \S \ref{section 5}, which we illustrate through an example in \S \ref{section 5.2}.

\subsection*{Acknowledgement} The project was initiated when Anwesh Ray was a Simons postdoctoral fellow at the Centre de Recherches Mathématiques in Montreal. He acknowledges support from the CRM-Simons fellowship during this time. The authors thank the anonymous referee for helpful comments.
\section{Preliminaries} \label{preliminaries}

\subsection{Graphs and covers of graphs} \label{gr}
We think of a graph as presented in \cite{Serre:1977} and \cite{Sunada:2013}.  Thus, a graph $X$ consists of a set of vertices $V_{X}$, a set of directed edges $\mathbf{E}_{X}$, an incidence map ${\rm inc}: \mathbf{E}_{X} \rightarrow V_{X} \times V_{X}$ given by $e \mapsto {\rm inc}(e) = (o(e),t(e))$, and an inversion map $\mathbf{E}_{X} \rightarrow \mathbf{E}_{X}$ given by $e \mapsto \bar{e}$ satisfying
\begin{enumerate}
\item $\bar{e} \neq e$,
\item $\bar{\bar{e}} = e$,
\item $o(\bar{e}) = t(e)$ and $t(\bar{e}) = o(e)$,
\end{enumerate}
for all $e \in \mathbf{E}_{X}$.  For $v \in V_{X}$, we let
$$\mathbf{E}_{X,v} = \{e \in \mathbf{E}_{X} \, : \, o(e) = v \}. $$
The set of undirected edges is obtained by identifying $e$ with $\bar{e}$ and will be denoted by $E_{X}$.  Note that we allow loops and parallel edges.  Such graphs are sometimes called multigraphs, but we will refer to them as graphs for simplicity.  \emph{We assume throughout this paper that all of our graphs are finite unless otherwise stated.  This means that both $V_{X}$ and $\mathbf{E}_{X}$ are finite sets.}  Setting $b_{i}(X) := {\rm rank}_{\mathbb{Z}}\, H_{i}(X,\mathbb{Z})$, the Euler characteristic $\chi(X)$ is defined as follows
$$\chi(X) = b_{0}(X) - b_{1}(X).$$ We note that $b_i(X)=0$ for all $i\geq 2$ (cf. \cite[p. 140 l. 7]{HatcherAT}).
If $X$ is connected, then
$$\chi(X) = |V_{X}| - |E_{X}|,$$ which follows from \cite[Section 4.4]{Sunada:2013} or \cite[Theorem 2.44]{HatcherAT}. In this case, we also note that $\chi(X) = 1 - b_{1}(X)$. The valency of a vertex $v \in V_{X}$ is defined as follows
$${\rm val}_{X}(v) = |\mathbf{E}_{X,v}|. $$

\par Henceforth, we choose a labeling of the vertices of $X$, say $V_{X} = \{v_{1},v_{2},\ldots,v_{g} \}$.  The degree (or valency) matrix associated to $X$ is the diagonal matrix $D = (d_{ij})$ satisfying 
\begin{equation} \label{degree_matrix}
d_{ii} = {\rm val}_{X}(v_{i}),
\end{equation} 
for all $i=1,\ldots,g$.  The adjacency matrix $A = (a_{ij})$ of $X$ is the symmetric matrix whose entries are specified as follows
\begin{equation} \label{adj_matrix}
a_{ij} =
\begin{cases}
\text{twice the number of undirected loops at } v_{i}, &\text{ if } i=j;\\
\text{the number of undirected edges between } v_{i} \text{ and } v_{j}, & \text{ otherwise.}
\end{cases}
\end{equation}
The matrix $Q := D - A$ is called the Laplacian matrix and is known to be singular. In fact, it is easy to see that any vector whose entries are all the same is in the null-space of $Q$.

\par In order to introduce the Galois theory of graphs, we first recall the notion of a morphism in the category of graphs. Let $X$ and $Y$ be two graphs.  A morphism of graphs $f:Y \rightarrow X$ consists of two functions $f_{V}:V_{Y} \rightarrow V_{X}$ and $f_{E}:\mathbf{E}_{Y} \rightarrow \mathbf{E}_{X}$ satisfying
\begin{enumerate}
\item $f_{V}(o(e)) = o(f_{E}(e))$,
\item $f_{V}(t(e)) = t(f_{E}(e))$,
\item $\overline{f_{E}(e)} = f_{E}(\bar{e})$, \label{trois}
\end{enumerate}
for all $e \in \mathbf{E}_{X}$.  We will often write $f$ for both $f_{V}$ and $f_{E}$.

\begin{definition} \label{cover}
Let $X$ and $Y$ be two graphs and let $f:Y\rightarrow X$ be a morphism of graphs. If $f$ satisfies the following conditions
\begin{enumerate}
\item $f:V_{Y} \rightarrow V_{X}$ is surjective,
\item for all $w \in V_{Y}$, the restriction $f|_{\mathbf{E}_{Y,w}}$ induces a bijection 
$$f|_{\mathbf{E}_{Y,w}}:\mathbf{E}_{Y,w} \stackrel{\approx}{\rightarrow}{\mathbf{E}_{X,f(w)}}, $$
\end{enumerate}
then $f$ is said to be a cover. We shall often say that $Y/X$ is a cover when the projection map $f$ is understood from the context.  
\end{definition}
\begin{remark} \label{con_cover}
In \cite[Section 5.1]{Sunada:2013}, Sunada requires both $X$ and $Y$ to be connected in the definition of a cover, but we do not make this requirement here.  If $Y$ is assumed to be connected, then so is $X$.  Indeed, if $v_{1}, v_{2} \in V_{X}$, then let $w_{1}, w_{2} \in V_{Y}$ be such that $f(w_{i}) = v_{i}$ for $i=1,2$.  If $Y$ is connected, there exists a path $c$ going from $w_{i}$ to $w_{j}$.  The image $f(c)$ is a path in $X$ going from $v_{i}$ to $v_{j}$ showing that $X$ is connected as well.  On the other hand, if $X$ is assumed to be connected, then $Y$ is not necessarily connected.
\end{remark}
If $Y/X$ is a cover and $Y$ is connected, then there exists a positive integer $d$ such that $|f^{-1}(v)| = d$ for all $v \in V_{X}$.  The integer $d$ is called the degree of $Y/X$ and will be denoted by $[Y:X]$.  If $f:Y \rightarrow X$ is a cover, then we let
$${\rm Aut}_{f}(Y/X) = \{ \sigma \in {\rm Aut}(Y) \, : \, f \circ \sigma = f\}. $$
\begin{definition}
The cover $f:Y \rightarrow X$ is called Galois if the following two conditions are satisfied.
\begin{enumerate}
\item The graph $Y$ is connected (and hence also $X$ by Remark \ref{con_cover}).
\item The group ${\rm Aut}_{f}(Y/X)$ acts transitively on the fiber $f^{-1}(v)$ for all $v \in V_{X}$.
\end{enumerate}
\end{definition}
If $Y/X$ is a Galois cover, then $[Y:X] = |{\rm Aut}_{f}(Y/X)|$, and we write ${\rm Gal}_{f}(Y/X)$, or ${\rm Gal}(Y/X)$ if $f$ is understood, instead of ${\rm Aut}_{f}(Y/X)$.  One has the usual Galois correspondence between subgroups of ${\rm Gal}(Y/X)$ and equivalence classes of intermediate covers of $Y/X$.

\begin{definition} \label{tower}
Let $\ell$ be a rational prime, and let $X$ be a connected graph.  A $\mathbb{Z}_{\ell}$-tower above $X$ consists of a sequence of covers of connected graphs
$$X = X_{0} \leftarrow X_{1} \leftarrow \ldots \leftarrow X_{n} \leftarrow \ldots $$
such that for all positive integer $n$, the cover $X_{n}/X$ obtained from composing the covers $X_{n} \rightarrow X_{n-1} \rightarrow \ldots \rightarrow X_{1} \rightarrow X$ is Galois with Galois group ${\rm Gal}(X_{n}/X)$ isomorphic to $\mathbb{Z}/\ell^{n}\mathbb{Z}$.
\end{definition}

\subsection{Artin-Ihara $L$-functions} \label{l_function}
Let $Y/X$ be an abelian cover of graphs, meaning that $Y/X$ is Galois with abelian group of covering transformations. The theory can be extended to a non-abelian setting, but we restrict ourselves to the abelian setting for simplicity, see \cite{Terras:2011} for further details. Set $G := \op{Gal}(Y/X)$ and denote by $\widehat{G} = {\rm Hom}_{\mathbb{Z}}(G,\mathbb{C}^{\times})$ the group of complex valued characters of $G$.  If $\psi \in \widehat{G}$, then the corresponding Artin-Ihara $L$-function is defined as follows
$$L_{Y/X}(u,\psi) := \prod_{\mathfrak{c}}\left(1 - \psi\left(\frac{Y/X}{\mathfrak{c}} \right)u^{l(\mathfrak{c})} \right)^{-1}. $$
The above product runs over all primes $\mathfrak{c}$ of $X$; $l(\mathfrak{c})$ is the length of $\mathfrak{c}$, and $\left(\frac{Y/X}{\mathfrak{c}} \right) \in G$ denotes the corresponding Frobenius automorphism.  
\begin{remark}For the definition of a prime in a graph, see \cite[Section 2.2]{Terras:2011}, and for the definition of the Frobenius automorphism, see \cite[Section 16]{Terras:2011}.
\end{remark}Recall that we labeled the vertices of $X$ as follows $V_{X} = \{v_{1},v_{2},\ldots,v_{g} \}$.  For each $i=1,\ldots,g$, choose a vertex $w_{i}$ of $Y$ in the fiber of $v_{i}$. For $\sigma \in G$, define the matrix $A(\sigma) = (a_{ij}(\sigma))$ as follows
\begin{equation*}
a_{ij}(\sigma) := 
\begin{cases}
\text{twice the number of undirected loops at the vertex } w_{i}, &\text{if } i=j \text{ and } \sigma =1;\\
\text{the number of undirected edges connecting } w_{i} \text{ to } w_{j}^{\sigma}, &\text{otherwise}.
\end{cases}
\end{equation*}
If $\psi \in \widehat{G}$, one defines the \emph{twisted adjacency matrix} as follows
\begin{equation}\label{twisted}
A_{\psi} := \sum_{\sigma \in G}\psi(\sigma) \cdot A(\sigma). 
\end{equation}
Then the three-term determinant formula \cite[Theorem 18.15]{Terras:2011} asserts that
\begin{equation} \label{3terms}
L_{Y/X}(u,\psi)^{-1} = \left(1 - u^{2}\right)^{-\chi(X)}  \op{det}\left(\op{I} - A_{\psi}u + (D-\op{I})u^{2}\right).
\end{equation}
In the above formula, we recall that $\chi(X)$ is the Euler characteristic of $X$, $D$ is the degree matrix of $X$ (cf. \eqref{degree_matrix}), $A_{\psi}$ the \emph{twisted adjacency matrix} associated to $\psi$ and the cover $Y/X$ (cf. \eqref{twisted}), and $\op{I}$ is the identity matrix. For ease of notation, set
\begin{equation} \label{h_def}
h_{Y/X}(u,\psi) :=  \op{det}\left(\op{I} - A_{\psi}u + (D-\op{I})u^{2}\right) \in \mathbb{Z}[\psi][u],
\end{equation}
where $\mathbb{Z}[\psi]$ denotes the ring of integers in the cyclotomic number field 
$$\Q(\psi) := \mathbb{Q}(\{\psi(\sigma)\, | \, \sigma \in G\}).$$  
One can check that $h_{Y/X}(u,\psi)$ does not depend on the choice of the vertices $w_{i}$.  When $\psi = \psi_{0}$ is the trivial character, then we write $h_{X}(u)$ instead of $h_{Y/X}(u,\psi_{0})$, and $Z_{X}(u)$ instead of $L_{Y/X}(u,\psi_{0})$.  The function $Z_{X}(u)$ is the Ihara zeta function of the graph $X$ introduced in \cite{Ihara:1966} and reinterpreted in the context of graph theory by Sunada in \cite{Sunada:1986} following a suggestion of Serre.  Note that $A_{\psi_{0}} = A$, the usual adjacency matrix of $X$ (cf. \eqref{adj_matrix}). Note that when the cover $Y/X$ is understood, it is suppressed in our notation, and we shall simply write $h_X(u, \psi)$ instead of $h_{Y/X}(u, \psi)$.

It was shown by Stark and Terras (see \cite[Proposition 3]{Stark/Terras:2000}) that the Artin-Ihara $L$-functions satisfy the usual Artin formalism.  It follows that
$$Z_{Y}(u) = Z_{X}(u) \cdot \prod_{\psi \neq \psi_{0}}L_{Y/X}(u,\psi). $$
Furthermore, (\ref{3terms}) and (\ref{RH_graph}) imply that
\begin{equation} \label{prod_dec}
h_{Y}(u) = h_{X}(u) \prod_{\psi \neq \psi_{0}}h_{Y/X}(u,\psi). 
\end{equation}

If $Z$ is an arbitrary connected graph, note that $h_{Z}(1) = 0$ since the Laplacian matrix $Q = D - A$ is singular, and it follows from (\ref{h_def}) that $h_{Z}(1) = \op{det}(Q)$.  In \cite{Hashimoto:1990}, Hashimoto showed that for a connected graph $Z$, one has
\begin{equation} \label{sp_val}
h_{Z}'(1) = -2 \chi(Z) \kappa_{Z}, 
\end{equation}
where $\kappa_{Z}$ is the number of spanning trees of $Z$ (see also \cite{Northshield:1998}).

Coming back to our abelian cover $Y/X$, and assuming that $\chi(X)\neq 0$,  (\ref{RH_graph}), (\ref{prod_dec}), and (\ref{sp_val}) together imply that the following identity holds
\begin{equation} \label{class_number_formula}
|G| \cdot \kappa_{Y} = \kappa_{X} \cdot \prod_{\psi \neq \psi_{0}}h_{Y/X}(1,\psi),
\end{equation}
(cf. \cite[Equation (7)]{Vallieres:2021}).  Very often, we shall make the following assumption on our graphs.
\begin{assumption}\label{euler assumption}
Let $X$ be a connected graph.  We assume that $\chi(X) \neq 0$.
\end{assumption}
\begin{remark}
The special case when $\chi(X) = 0$ can be treated separately. If $X$ is a connected graph such that $\chi(X) = 0$, note that removing all vertices of degree one and their adjacent edge does not change the primes, the Ihara zeta function, and the number of spanning trees. Thus we can assume that ${\rm val}_{X}(v) \ge 2$ for all $v \in V_{X}$.  In this case, $X$ is necessarily the cycle graph on $g$ vertices, which we denote by $C_{g}$.  Now, $\pi_{1}(C_{g},v_{0})$ is a free group on one generator; hence, it is abelian, and 
$$\pi_{1}(C_{g},v_{0}) \simeq H_{1}(C_{g},\mathbb{Z}) \simeq \mathbb{Z}.$$
It follows that up to isomorphism, there is a unique Galois cover $Y/C_{g}$ of degree $n$, it is abelian, with ${\rm Gal}(Y/C_{g}) \simeq \mathbb{Z}/n\mathbb{Z}$.  By (\ref{RH_graph}), we have $Y \simeq C_{gn}$.  Also note that the number of spanning trees of $C_{g}$ is $g$, and thus, the number of spanning trees of $Y$ is $gn$. Thus, the growth patterns in the number of spanning trees in any abelian tower is well understood, due to this explicit description.
\end{remark}

\subsection{Constructions of Galois covers via voltage assignments} \label{vo_assign}
Our main reference for this section is \cite{Gross/Tucker:2001}.  If $X$ is a graph, then we say that a subset $S$ of $\mathbf{E}_{X}$ is an orientation for $X$ if the following two conditions are satisfied:
\begin{enumerate}
\item $S \cap \overline{S} = \varnothing$,
\item $\mathbf{E}_{X} = S \cup \overline{S}$.
\end{enumerate}
In other words, an orientation $S$ is a complete set of representatives for $E_{X}$.  
\begin{remark}
In \cite{Vallieres:2021, mcgownvallieresII, mcgownvallieresIII}, \cite{lei2022non}, \cite{Sage/Vallieres:2022}, and \cite{DLRV}, an orientation was referred to instead as a \emph{section}.
\end{remark}
Given a finite group $G$ and a function $\alpha:S \rightarrow G$ (called a voltage assignment), one can associate a graph $X(G,S,\alpha)$ constructed as follows. Extend $\alpha$ to $\mathbf{E}_X$ by setting $\alpha(\bar{s}) = \alpha(s)^{-1}$ for $s \in S$. Then, the set of vertices of $X(G,S,\alpha)$ is $V_{X} \times G$ and the set of directed edges is $\mathbf{E}_{X} \times G$.  The edge $(e,\sigma)$ connects the vertex $(o(e),\sigma)$ to the vertex $(t(e),\sigma\cdot \alpha(e))$ and the inversion map is given by $\overline{(e,\sigma)} := (\bar{e},\sigma \cdot \alpha(e))$. It is easy to check that $X(G,S,\alpha)$ is a graph in the sense of \S \ref{gr}, which we leave to the reader. In what follows, $v$ (resp. $e$) shall denote a vertex (resp. edge) of $X$, and $\sigma$ shall denote an element of $G$.  The map $p:X(G,S,\alpha) \rightarrow X$ defined via
\begin{equation} \label{proj_map}
p(v,\sigma) := v \text{ and } p(e,\sigma) := e 
\end{equation}
is a cover of graphs.  Given $\tau \in G$, the map $\phi_{\tau}:X(G,S,\alpha) \longrightarrow X(G,S,\alpha)$ defined via
$$\phi_{\tau}(v,\sigma) = (v, \tau \cdot \sigma) \text{ and } \phi_{\tau}(e,\sigma) = (e,\tau \cdot \sigma) $$
satisfies $\phi_{\tau} \in {\rm Aut}_{p}(X(G,S,\alpha)/X)$, and the function 
\begin{equation} \label{iso_au}
G \hookrightarrow {\rm Aut}_{p}(X(G,S,\alpha)/X)
\end{equation}
given by $\tau \mapsto \phi_{\tau}$ is an injective group morphism.  If one assumes that $X(G,S,\alpha)$ is connected, then $X(G,S,\alpha)/X$ is a Galois cover and it follows from the unique lifting theorem \cite[Theorem 5.1]{Sunada:2013} that (\ref{iso_au}) is an isomorphism.

If the base graph $X$ is a bouquet (i.e., has a single vertex), then the graphs $X(G,S,\alpha)$ are precisely the Cayley-Serre graphs, a slight generalization of the usual Cayley graphs.  As explained in \cite[Section 3]{Sage/Vallieres:2022}, given a fixed orientation $S$ of $X$, every abelian cover $Y/X$ with Galois group $G$ is isomorphic as a cover to one of the form $X(G,S,\alpha)/X$ for some function $\alpha:S \rightarrow G$.  

If $G$ and $G_{1}$ are two finite groups and $f:G \rightarrow G_{1}$ is a group morphism, then one obtains a morphism of graphs $f_{*}:X(G,S,\alpha) \rightarrow X(G_{1},S,f \circ \alpha)$ by setting
$$f_{*}(v,\sigma) = (v,f(\sigma)) \text{ and } f_{*}(e,\sigma) = (e,f(\sigma)). $$
If $f$ is surjective, then $f_{*}$ is a cover.  For $\tau \in {\rm ker}(f)$, let $\phi_{\tau}:X(G,S,\alpha) \longrightarrow X(G,S,\alpha)$ be defined via
$$\phi_{\tau}(v,\sigma) = (v, \tau \cdot \sigma) \text{ and } \phi_{\tau}(e,\sigma) = (e,\tau \cdot \sigma). $$
The map 
\begin{equation} \label{ker_iso}
{\rm ker}(f) \hookrightarrow {\rm Aut}_{f_{*}}(X(G,S,\alpha)/X(G_{1},S,f \circ \alpha)),
\end{equation}
defined via $\tau \mapsto \phi_{\tau}$ is a well-defined injective group morphism.  If one assumes furthermore that $X(G,S,\alpha)$ is connected, then $f_{*}:X(G,S,\alpha) \rightarrow X(G_{1},S,f \circ \alpha)$ is a Galois cover and (\ref{ker_iso}) is an isomorphism of groups.

Starting with a function $\alpha:S \rightarrow \mathbb{Z}_{\ell}$, and letting $\alpha_{/n}$ be the reduction of $\alpha$ modulo $\ell^{n}$, it follows from the above discussion that we obtain a $\mathbb{Z}_{\ell}$-tower
\begin{equation} \label{concrete_tower}
X \leftarrow X(\mathbb{Z}/\ell\mathbb{Z},S,\alpha_{/1}) \leftarrow X(\mathbb{Z}/\ell^{2}\mathbb{Z},S,\alpha_{/2})\leftarrow \ldots \leftarrow X(\mathbb{Z}/\ell^{n}\mathbb{Z},S,\alpha_{/n}) \leftarrow \ldots 
\end{equation}
in the sense of Definition \ref{tower}, provided the graphs $X(\mathbb{Z}/\ell^{n}\mathbb{Z},S,\alpha_{/n})$ are connected for all nonnegative integers $n$.  As explained in \cite[Section 2.2]{DLRV}, given an orientation $S$ for $X$, every $\mathbb{Z}_{\ell}$-tower in the sense of Definition \ref{tower} is isomorphic in a suitable sense to a $\mathbb{Z}_{\ell}$-tower as in (\ref{concrete_tower}) for some function $\alpha:S\rightarrow \mathbb{Z}_{\ell}$.
\begin{assumption}\label{main assumption}
Let $\ell$ be a rational prime and let $\alpha: S\rightarrow \Z_\ell$ be a function. We assume that the derived graphs $X(\Z/\ell^n\mathbb{Z}, S, \alpha_{/n})$ are connected for all $n\geq 0$.
\end{assumption}
\begin{remark}
See \cite[Theorem 2.11]{DLRV} for a condition that guarantees the validity of Assumption \ref{main assumption}. We explain the assumption in further detail in \S \ref{conn1} below.
\end{remark}
\subsubsection{Connectedness of graphs obtained from voltage assignments} \label{conn1}
Let $X$ be a connected graph, $S$ an orientation for $X$, $G$ a finite group, and let $\alpha:S \rightarrow G$ be a function which we extend to all of $\mathbf{E}_{X}$ as usual by setting
\begin{equation} \label{sym}
\alpha(\bar{s}) = \alpha(s)^{-1}.    
\end{equation}
If $c = e_{1} e_{2}  \ldots  e_{n}$ is a path in $X$, then we set 
$$\alpha(c) = \alpha(e_{1}) \cdot \ldots \cdot \alpha(e_{n}) \in G. $$
The condition (\ref{sym}) guarantees that if $c_{1}$ and $c_{2}$ are homotopically equivalent, then $\alpha(c_{1}) = \alpha(c_{2})$.  Choosing an arbitrary vertex $v_{0} \in V_{X}$, it follows that $\alpha$ induces a group morphism
\begin{equation} \label{group_mor}
\rho_{\alpha}:\pi_{1}(X,v_{0}) \rightarrow G, 
\end{equation}
defined via $\rho_{\alpha}([\gamma]) = \alpha(\gamma)$.
\begin{theorem} \label{connectedness}
Let $X$ be a connected graph and let $S,G, \alpha$ be as above.  Then the graph $X(G,S,\alpha)$ is connected if and only if the group morphism $\rho_{\alpha}$ defined above in (\ref{group_mor}) is surjective.
\end{theorem}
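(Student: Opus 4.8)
The plan is to analyze connectedness of $X(G,S,\alpha)$ by tracking which vertices are reachable from the fixed base vertex $(v_0,1)$, where $1$ denotes the identity of $G$, and to relate this reachable set to the image of $\rho_\alpha$. A graph is connected exactly when every vertex lies in the connected component of one fixed vertex, so it suffices to describe the component of $(v_0,1)$ and count its vertices.

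First I would set up path lifting. Given a path $c = e_1 e_2 \cdots e_n$ in $X$ emanating from $v_0$, the edge structure of $X(G,S,\alpha)$ shows that $c$ lifts uniquely to a path in $X(G,S,\alpha)$ starting at $(v_0,1)$: the lift begins with the edge $(e_1,1)$, which runs from $(v_0,1)$ to $(t(e_1),\alpha(e_1))$, continues with $(e_2,\alpha(e_1))$, and so on, terminating at the vertex $(t(c),\alpha(c))$ where $\alpha(c) = \alpha(e_1)\cdots\alpha(e_n)$. Hence the vertices above a fixed $v \in V_{X}$ that are reachable from $(v_0,1)$ are precisely those of the form $(v,\alpha(c))$ as $c$ ranges over paths in $X$ from $v_0$ to $v$; such paths exist since $X$ is connected.

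Next I would identify this set of reachable group elements with a coset of $H := \image(\rho_\alpha)$, which is a subgroup of $G$ because $\rho_\alpha$ is a group morphism. Fix one path $c_v$ from $v_0$ to $v$. If $c$ is any other such path, then $c\cdot\overline{c_v}$ (concatenation with the reversed path) is a loop at $v_0$, and the homotopy invariance of $\alpha$ recorded just after (\ref{group_mor}) gives $\alpha(c)\alpha(c_v)^{-1} = \alpha(c\cdot\overline{c_v}) = \rho_\alpha([c\cdot\overline{c_v}]) \in H$; conversely, for any loop $\gamma$ at $v_0$ the path $\gamma c_v$ realizes $\alpha(\gamma)\alpha(c_v) = \rho_\alpha([\gamma])\alpha(c_v)$. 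Thus the reachable vertices above $v$ are exactly $\{v\}\times H\alpha(c_v)$, a set of cardinality $|H|$.

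Finally I would count. Summing over the $|V_{X}|$ vertices of $X$, the connected component of $(v_0,1)$ has exactly $|V_{X}|\cdot|H|$ vertices, whereas $X(G,S,\alpha)$ has $|V_{X}|\cdot|G|$ vertices in total. Therefore $X(G,S,\alpha)$ is connected if and only if $|H| = |G|$, i.e. if and only if $H = G$, which is precisely the surjectivity of $\rho_\alpha$. I expect the only delicate points to be bookkeeping the left/right conventions in the coset computation (which direction $\alpha$ accumulates along an edge, and hence whether one obtains left or right cosets of $H$) and confirming that reachability from the single vertex $(v_0,1)$ detects global connectedness; the latter is immediate, so getting the coset bookkeeping right is the main thing to be careful about.
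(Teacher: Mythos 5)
Your proof is correct. It rests on the same core mechanism as the paper's proof---unique path lifting in $X(G,S,\alpha)$, multiplicativity of $\alpha$ along paths, and the fact that voltages of loops at $v_{0}$ realize exactly $\mathrm{Im}(\rho_{\alpha})$---but it is organized differently. The paper proves the two implications separately: for sufficiency it reduces connectedness to joining vertices within the single fiber $p^{-1}(v_{0})$ and lifts a loop whose class maps to $\sigma^{-1}\tau$; for necessity it projects a path from $(v_{0},1_{G})$ to $(v_{0},\sigma)$ down to a loop. You instead compute the entire connected component of $(v_{0},1_{G})$ in one pass, showing that the reachable vertices above each $v$ form the right coset $H\alpha(c_{v})$ with $H=\mathrm{Im}(\rho_{\alpha})$, and then conclude by counting vertices. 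Both directions then fall out simultaneously, and your description gives strictly more information: the number of connected components of $X(G,S,\alpha)$ is $[G:H]$, each of size $|V_{X}|\cdot|H|$, which is a useful fact in its own right (for instance, it underlies the remark after the theorem about checking connectedness via generators of $\pi_{1}$). The mild costs are that the counting step uses finiteness of $G$ (harmless here, since $G$ is assumed finite, and avoidable anyway: taking $v=v_{0}$ with the trivial path shows directly that connectedness forces $H=G$), and that one should note, as you implicitly do via the edge structure, that every path upstairs starting at $(v_{0},1_{G})$ is the lift of its projection, so "reachable" is exactly parametrized by downstairs paths. Your coset bookkeeping (right cosets $H\alpha(c_{v})$, matching the convention that $(e,\sigma)$ ends at $(t(e),\sigma\cdot\alpha(e))$) is correct as written.
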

\begin{proof}
Assume first that $\rho_{\alpha}$ is surjective.  Since $p:X(G,S,\alpha) \rightarrow X$ is a cover, every directed edge $e$ of $X$ can be uniquely lifted to a directed edge of $X(G,S,\alpha)$ once a vertex in the fiber of $o(e)$ has been chosen.  The connectedness of $X$ implies that given any vertices $v_{1}, v_{2} \in V_{X}$ and a choice of $w_{1} \in p^{-1}(v_{1})$, there exists a path in $X(G,S,\alpha)$ starting at $w_{1}$ and ending at a vertex in $p^{-1}(v_{2})$.  To show that $X(G,S,\alpha)$ is connected, it suffices then to show that every vertex in $p^{-1}(v_{0})$ can be connected by a path in $X(G,S,\alpha)$.  Let thus $(v_{0},\sigma), (v_{0},\tau) \in p^{-1}(v_{0})$ be arbitrary.  Since $\rho_{\alpha}$ is assumed to be surjective, there exists a loop $\gamma$ in $X$ based at $v_{0}$ satisfying $\rho_{\alpha}(v_{0}) = \sigma^{-1} \cdot \tau$.  One has
$$\gamma = e_{1}e_{2} \ldots e_{n}, $$
for some $e_{i} \in \mathbf{E}_{X}$.  Then, the path
$$(e_{1},\sigma)(e_{2},\sigma \cdot \alpha(e_{1}))\ldots(e_{n},\sigma \cdot \alpha(e_{1}) \cdot \ldots \cdot \alpha(e_{n-1})) $$
in $X(G,S,\alpha)$ connects $(v_{0},\sigma)$ to $(v_{0},\tau)$ showing the claim.

Conversely, if $X(G,S,\alpha)$ is connected, then there is a path $c$ in $X(G,S,\alpha)$ going from $(v_{0},1_{G})$ to $(v_{0},\sigma)$.  The path $c$ is of the form
$$(e_{1},1_{G})(e_{2}, \alpha(e_{1}))\ldots(e_{n}, \alpha(e_{1}) \cdot \ldots \cdot \alpha(e_{n-1})), $$
for some directed edges $e_{i} \in \mathbf{E}_{X}$ satisfying
\begin{enumerate}
\item $o(e_{1}) = v_{0} = t(e_{n})$, 
\item $t(e_{i}) = o(e_{i+1})$ for $i=1,\ldots,n-1$,
\item $\sigma = \alpha(e_{1}) \cdot \ldots \cdot \alpha(e_{n})$.
\end{enumerate}
Then, the loop $\gamma = p(c)$ satisfies $\rho_{\alpha}([\gamma]) = \sigma$ and since $\sigma \in G$ was chosen arbitrarily, this shows the surjectivity of $\rho_{\alpha}$.
\end{proof}
Theorem \ref{connectedness} can be used to check if $X(G,S,\alpha)$ is connected provided a basis for the free group $\pi_{1}(X,v_{0})$ is known.  Indeed, say $[\gamma_{1}],\ldots,[\gamma_{t}]$ is such a basis, then $X(G,S,\alpha)$ is connected if and only if 
$$G = \langle \rho_{\alpha}([\gamma_{i}]) \, : \, i=1,\ldots,t \rangle. $$
Furthermore, we remind the reader that a basis for $\pi_{1}(X,v_{0})$ can be constructed as follows.  Let $\mathfrak{T}$ be a spanning tree for $X$ and let 
$$S(\mathfrak{T}) = \{s \in S \, | \, \text{The undirected edge corresponding to } s \in \mathfrak{T} \}.$$
For $s \in S \smallsetminus S(\mathfrak{T})$, let $\gamma_{s}$ be the loop based at $v_{0}$ obtained by going through the unique geodesic in $\mathfrak{T}$ going from $v_{0}$ to $o(s)$ followed by $s$ followed by the unique geodesic in $\mathfrak{T}$ going from $t(s)$ to $v_{0}$.  Then the collection
$$\{[\gamma_{s}] \, : \, s \in S \smallsetminus S(\mathfrak{T})\} $$
is a basis for the free group $\pi_{1}(X,v_{0})$.  (See \cite[page 68]{Sunada:2013}.)

\begin{example}
Let $\ell = 2$, $G = \mathbb{Z}/2\mathbb{Z}$ and consider the graph $X$ consisting of four undirected edges connecting two distinct vertices $\{v_{0},v_{1}\}$.  Let $S = \{s_{1},s_{2},s_{3},s_{4} \}$, where $s_{1}, s_{2}$ go from $v_{1}$ to $v_{0}$ and $s_{3}, s_{4}$ go from $v_{0}$ to $v_{1}$.
\[
\begin{tikzpicture}
\draw[fill=black] (0,0) circle (1pt) node[below]{$v_{0}$};
\draw[fill=black] (1,0) circle (1pt) node[below]{$v_{1}$};

\draw (0,0) edge [decoration={markings, mark= at position 0.44 with {\arrow[xscale=-1]{stealth}}},preaction={decorate},bend left=15] (1,0);
\draw (0,0) edge [decoration={markings, mark= at position 0.56 with {\arrow[xscale=1]{stealth}}},preaction={decorate},bend right=15] (1,0);
\draw (0,0) edge [decoration={markings, mark= at position 0.44 with {\arrow[xscale=-1]{stealth}}},preaction={decorate},bend left=40] (1,0);
\draw (0,0) edge [decoration={markings, mark= at position 0.56 with {\arrow[xscale=1]{stealth}}},preaction={decorate},bend right=40] (1,0);
\end{tikzpicture}  
\]

Consider the function $\alpha:S \rightarrow G$ defined via $\alpha(s_{i}) = \bar{1}$ for $i=1,2,3,4$.  Choose the spanning tree $\mathfrak{T}$ consisting of the undirected edge determined by $s_{4}$.  Then,
$$\gamma_{s_{1}} = s_{4}s_{1}, \gamma_{s_{2}} = s_{4}s_{2}, \text{ and } \gamma_{s_{3}} = s_{3} \bar{s}_{4}. $$
One has $\alpha(\gamma_{s_{i}}) = \bar{0}$ for $i=1,2,3$, and thus $X(\mathbb{Z}/2\mathbb{Z},S,\alpha)$ is not connected.
\end{example}

\subsection{The analogue of Iwasawa's theorem} \label{Iw_thm}
Let $X$ be a connected graph satisfying Assumption \ref{euler assumption}, $S$ an orientation for $X$, and $G$ a finite abelian group.  Let also $\alpha:S \rightarrow G$ be a function for which $X(G,S,\alpha)$ is connected.  If we label the vertices of $X$ as $V_{X}=\{v_{1},v_{2},\ldots,v_{g} \}$ and we let $w_{i} = (v_{i},1_{G}) \in V_{X(G,S,\alpha)}$, then \cite[Corollary 5.3]{mcgownvallieresIII} shows that for $\psi \in \widehat{G}$, the corresponding \emph{twisted adjacency matrix} is given by
\begin{equation} \label{twisted_concrete}
A_{\psi} = \left(\sum_{\substack{s \in S \\ {\rm inc}(s) = (v_{i},v_{j})}}\psi(\alpha(s)) + \sum_{\substack{s \in S \\ {\rm inc}(s) = (v_{j},v_{i})}}\psi(-\alpha(s)) \right).
\end{equation}
Recall that $\mu_{\ell^{\infty}}$ is the group of $\ell$-th power roots of unity in $\mathbb{C}^{\times}$.  From now on, we fix an embedding 
$\mathbb{Q}(\mu_{\ell^{\infty}}) \hookrightarrow \overline{\mathbb{Q}}_{\ell} \subseteq \mathbb{C}_{\ell}$,
and we view all characters of a finite abelian $\ell$-group as taking values in $\mathbb{C}_{\ell}^{\times}$ via this embedding.  Let $|\cdot|_{\ell}$ be the usual absolute value on $\mathbb{C}_{\ell}$ normalized so that $|\ell|_{\ell} = \ell^{-1}$. Set 
$D := \{t \in \mathbb{C}_{\ell} \, : \, |t|_{\ell} < 1 \}$, 
and for a continuous group morphism $\psi:\mathbb{Z}_{\ell} \rightarrow \mathbb{C}_{\ell}^{\times}$ of finite order, let $t_\psi\in D$ be the element
$t_{\psi} := \psi(1) - 1$.
Consider now the continuous group morphism 
$\rho:\mathbb{Z}_{\ell} \rightarrow \mathbb{Z}_{\ell}\llbracket T \rrbracket^{\times}$ 
given by $a \mapsto \rho(a) = (1+T)^{a}$ which satisfies
\begin{equation} \label{univ_char}
\rho(a)(t_{\psi}) = \psi(a),
\end{equation}
for all continuous group morphisms $\psi:\mathbb{Z}_{\ell} \rightarrow \mathbb{C}_{\ell}^{\times}$ of finite order.  
\begin{remark}
In \cite{mcgownvallieresIII} and \cite{Sage/Vallieres:2022}, the authors work with the morphism $a \mapsto (1-T)^{a}$ rather than $\rho$ of the present paper, so our convention here is slightly different.
\end{remark}
Given a function $\alpha:S \rightarrow \mathbb{Z}_{\ell}$, we define
$$A_{\rho} = \left(\sum_{\substack{s \in S \\ {\rm inc}(s) = (v_{i},v_{j})}}\rho(\alpha(s)) + \sum_{\substack{s \in S \\ {\rm inc}(s) = (v_{j},v_{i})}}\rho(-\alpha(s)) \right) \in M_{g \times g}(\mathbb{Z}_{\ell}\llbracket T \rrbracket),$$ 
and we let
\begin{equation} \label{char_ser}
f_{X,\alpha}(T) = \op{det}(D - A_{\rho}) \in \mathbb{Z}_{\ell}\llbracket T \rrbracket, 
\end{equation}
where $D$ is the degree matrix of $X$.  Using (\ref{h_def}), (\ref{twisted_concrete}), and (\ref{univ_char}), one has
\begin{equation} \label{pow_ser_ev}
f_{X,\alpha}(t_{\psi}) = h_X(1,\psi), 
\end{equation}
for all continuous characters $\psi:\mathbb{Z}_{\ell} \rightarrow \mathbb{C}_{\ell}^{\times}$ of finite order. At the level $n$ of the tower (\ref{concrete_tower}), formula (\ref{class_number_formula}) becomes
\begin{equation} \label{prod_fo}
\ell^{n} \cdot \kappa_{n} = \kappa_{X} \prod_{\psi \neq \psi_{0}}h_{X}(1,\psi), 
\end{equation}
where the product is over all non-trivial characters $\psi$ of $\mathbb{Z}/\ell^{n}\mathbb{Z} \simeq {\rm Gal}(X(\mathbb{Z}/\ell^{n}\mathbb{Z},S,\alpha_{/n})/X)$.  Recall that to any power series 
$$Q(T) = a_{0} + a_{1}T +a_{2}T^{2}+\ldots \in \mathbb{Z}_{\ell}\llbracket T \rrbracket,$$ 
one associates two invariants, namely
$$\mu(Q(T)) = {\rm min}\{{\rm ord}_{\ell}(a_{i}) \, | \, i \ge 0\},$$
and
$$\lambda(Q(T)) = {\rm min}\{i \ge 0 \, | \, {\rm ord}_{\ell}(a_{i}) = \mu(Q(T))\}.$$
Putting (\ref{pow_ser_ev}), (\ref{prod_fo}) together and using a classical argument about power series in $\mathbb{Z}_{\ell}\llbracket T \rrbracket$ evaluated at points of the form $t_{\psi}$ for continuous characters $\psi:\mathbb{Z}_{\ell} \rightarrow \mathbb{C}_{\ell}^{\times}$ of finite order, one obtains the following result which is \cite[Theorem 6.1]{mcgownvallieresIII}:
\begin{theorem} \label{Iw_anal_graph}
Let $X$ be a connected graph satisfying Assumption \ref{euler assumption}, $\ell$ a rational prime, $S$ an orientation for $X$, and $\alpha:S \rightarrow \mathbb{Z}_{\ell}$ a function for which Assumption \ref{main assumption} is satisfied.  Consider the corresponding $\mathbb{Z}_{\ell}$-tower
\begin{equation} \label{tto}
X \leftarrow X(\mathbb{Z}/\ell\mathbb{Z},S,\alpha_{/1}) \leftarrow X(\mathbb{Z}/\ell^{2}\mathbb{Z},S,\alpha_{/2})\leftarrow \ldots \leftarrow X(\mathbb{Z}/\ell^{n}\mathbb{Z},S,\alpha_{/n}) \leftarrow \ldots,
\end{equation}
and let $\kappa_{n}$ denote the number of spanning trees of $X(\mathbb{Z}/\ell^{n}\mathbb{Z},S,\alpha_{/n})$.  Let also
$$\mu = \mu(f(T)) \text{ and } \lambda = \lambda(f(T)) - 1,$$
where $f(T) = f_{X,\alpha}(T)$ was defined in (\ref{char_ser}) above.  Then, there exist $n_{0},\nu \in \mathbb{Z}$ for which
$${\rm ord}_{\ell}(\kappa_{n}) = \mu \ell^{n} + \lambda n + \nu, $$
when $n\ge n_{0}$.
\end{theorem}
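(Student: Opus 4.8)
The plan is to feed the interpolation formula (\ref{pow_ser_ev}) into the class-number factorization (\ref{prod_fo}) and then read off the asymptotics from a Weierstrass decomposition of $f(T) = f_{X,\alpha}(T)$, mirroring the classical passage from a characteristic power series to Iwasawa's formula. First I would extend $\ord_\ell$ to $\overline{\mathbb{Q}}_\ell$ (normalized by $\ord_\ell(\ell) = 1$), take $\ord_\ell$ of (\ref{prod_fo}), and substitute (\ref{pow_ser_ev}) to obtain
\[
n + \ord_\ell(\kappa_n) = \ord_\ell(\kappa_X) + \sum_{\psi \neq \psi_0} \ord_\ell\bigl(f(t_\psi)\bigr),
\]
where $\psi$ runs over the nontrivial characters of $\mathbb{Z}/\ell^n\mathbb{Z}$ and $t_\psi = \psi(1) - 1$. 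Since the left-hand side lies in $\mathbb{Z}$, every identity below is an equality of rational numbers.

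Next I would invoke the Weierstrass preparation theorem to write $f(T) = \ell^{\mu} U(T) P(T)$ with $U \in \mathbb{Z}_\ell\llbracket T \rrbracket^\times$, $P$ distinguished of degree $\lambda(f)$, and $\mu = \mu(f)$. A decisive preliminary remark is that $f(0) = h_X(1) = \det(Q) = 0$, because the Laplacian $Q = D - A$ is singular; hence $T \mid f(T)$, so $\lambda(f) \geq 1$, which is exactly what forces the normalization $\lambda = \lambda(f) - 1$ in the statement. For a character $\psi$ of exact order $\ell^j$ the element $t_\psi = \zeta - 1$, with $\zeta$ a primitive $\ell^j$-th root of unity, lies in the maximal ideal of $\mathbb{C}_\ell$, and $\ord_\ell(t_\psi) = 1/\phi(\ell^j)$.

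The core computation is the local valuation $\ord_\ell(f(t_\psi))$. As $U$ is a unit and $|t_\psi|_\ell < 1$, one has $\ord_\ell(U(t_\psi)) = 0$. For the distinguished factor, once $\phi(\ell^j) > \deg P = \lambda + 1$ the leading monomial of $P$ dominates every lower-order term (whose coefficients lie in $\ell\mathbb{Z}_\ell$), so that $\ord_\ell(P(t_\psi)) = (\deg P)\cdot\ord_\ell(t_\psi) = (\lambda+1)/\phi(\ell^j)$. Consequently there is a threshold $j_0$ with
\[
\ord_\ell\bigl(f(t_\psi)\bigr) = \mu + \frac{\lambda+1}{\phi(\ell^j)} \qquad \text{for $\psi$ of order $\ell^j$, $j \geq j_0$.}
\]
I expect this uniform domination to be the crux of the argument: it must hold simultaneously for all $\phi(\ell^j)$ characters of each large order $\ell^j$, while the finitely many characters of order $\ell^j$ with $j < j_0$ must be shown to contribute only a bounded amount.

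Finally I would sum over all nontrivial $\psi$ grouped by order. There are $\phi(\ell^j)$ characters of order $\ell^j$, so for $n \geq j_0$ the high-order part contributes $\sum_{j=j_0}^{n}\bigl(\mu\,\phi(\ell^j) + (\lambda+1)\bigr)$ and the low-order part a constant. The telescoping identity $\sum_{j=j_0}^n \phi(\ell^j) = \ell^n - \ell^{j_0-1}$ then gives $\sum_{\psi \neq \psi_0}\ord_\ell(f(t_\psi)) = \mu\,\ell^n + (\lambda+1)\,n + C$ for a constant $C$ independent of $n$. Substituting into the displayed identity and cancelling the leading $n$ on the left—precisely the cancellation that converts the coefficient $(\lambda+1)$ into $\lambda$—yields
\[
\ord_\ell(\kappa_n) = \mu\,\ell^n + \lambda\, n + \nu, \qquad \nu := \ord_\ell(\kappa_X) + C,
\]
for all $n \geq n_0 := j_0$. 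Integrality of $\nu$ is automatic since the left-hand side is an integer for every $n$.
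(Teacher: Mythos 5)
Your proof is correct and follows essentially the same route the paper takes: it combines the interpolation identity \eqref{pow_ser_ev} with the factorization \eqref{prod_fo} and then carries out the classical valuation argument for power series in $\mathbb{Z}_\ell\llbracket T\rrbracket$ evaluated at the points $t_\psi$, which is exactly the ``classical argument'' the paper invokes (with details deferred to \cite[Theorem 6.1]{mcgownvallieresIII}). The specifics you supply---Weierstrass preparation, domination of the leading term of the distinguished polynomial once $\phi(\ell^j) > \lambda+1$, the telescoping sum $\sum_{j=j_0}^{n}\phi(\ell^j) = \ell^n - \ell^{j_0-1}$, and the cancellation of one copy of $n$ coming from $\ell^n\kappa_n$, which accounts for the normalization $\lambda = \lambda(f)-1$---are all sound.
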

We shall refer to $\mu,\lambda,\nu$ as the Iwasawa invariants associated to the $\mathbb{Z}_{\ell}$-tower (\ref{tto}), and they will be denoted by
$$\mu_{\ell}(X,\alpha),\lambda_{\ell}(X,\alpha), \text{ and } \nu_{\ell}(X,\alpha). $$
We shall often drop the $\ell$ from the notation if it is understood from the context.  Note that
$$\mu_{\ell}(X,\alpha) = \mu(f_{X,\alpha}(T)), \text{ but } \lambda_{\ell}(X,\alpha) = \lambda(f_{X,\alpha}(T)) - 1.$$
\begin{remark}
If $\ell$ is odd, then \cite[Theorem 2.14]{DLRV} shows that $\lambda_{\ell}(X,\alpha)$ is odd.
\end{remark}

\section{Pullbacks} \label{pullb}
In the category of graphs, there is a well-defined notion of pullback as was pointed out for instance in \cite{Stallings:1983}.  If one starts with a diagram
\begin{equation*}
\begin{tikzcd}
Y_{1} \arrow["p_{1}",d] & \\
X & \arrow["p_{2}",l] Y_{2},
\end{tikzcd}
\end{equation*}
where $p_{1}$ and $p_{2}$ are morphisms of graphs, one defines the pullback $Y_{1} \times_{X}Y_{2}$ by defining the set of vertices to be
$$V = \{(w_{1},w_{2}) \in V_{Y_{1}}\times V_{Y_{2}} \, : \, p_{1}(w_{1}) = p_{2}(w_{2}) \}, $$
and the set of directed edges to be
$$\mathbf{E} = \{(e_{1},e_{2}) \in \mathbf{E}_{Y_{1}}\times\mathbf{E}_{Y_{2}} \, : \, p_{1}(e_{1}) = p_{2}(e_{2}) \}. $$
Further, define
$$o(e_{1},e_{2}) = (o(e_{1}),o(e_{2})), t(e_{1},e_{2}) = (t(e_{1}),t(e_{2})) \text{ and } \overline{(e_{1},e_{2})} = (\bar{e}_{1},\bar{e}_{2}). $$
We leave it to the reader to check that $Y_{1}\times_{X}Y_{2}$ is a graph.  For $i=1,2$, the maps $\pi_{i}:Y_{1}\times_{X}Y_{2} \rightarrow Y_{i}$ given by 
$$\pi_{i}(w_{1},w_{2}) = w_{i} \text{ and } \pi_{i}(e_{1},e_{2}) = e_{i} $$
are morphisms of graphs, and the triple $(Y_{1}\times_{X}Y_{2},\pi_{1},\pi_{2})$ satisfies the usual universal property satisfied by a pullback.  We leave the details to the reader.

\begin{remark}
Even if $X$, $Y_{1}$ and $Y_{2}$ are connected graphs, then the pullback $Y_{1} \times_{X} Y_{2}$ is not necessarily connected.  For example, take $X = B_{1}$, the bouquet with one undirected loop, and $Y$ the graph consisting of two vertices $\{v_{1},v_{2}\}$ and two parallel undirected edges between the two vertices.  Say $\mathbf{E}_{X} = \{s,\bar{s}\}$ and $\mathbf{E}_{Y} = \{e_{1},e_{2},\bar{e}_{1},\bar{e}_{2}\}$, where $e_{1}$ goes from $v_{1}$ to $v_{2}$ and $e_{2}$ from $v_{2}$ to $v_{1}$.  Consider also the morphism of graphs $p:Y \rightarrow X$ given by $e_{i} \mapsto s$ and by mapping the two vertices to the unique one of $X$.  If we take $Y_{1} = Y_{2} = Y$ and $p_{1} = p_{2} = p$, then the graph $Y\times_{X}Y$ is disconnected; it is two disjoint copies of $Y$.  The graphs $X$, $Y$ and $Y \times_{X}Y$ are pictured below:
\begin{figure}[h]
\begin{center}
\begin{tikzpicture}
\draw[fill=black] (0,0) circle (1pt);

\path (0,0) edge [loop] (0,0);

\end{tikzpicture}, \, \, \,
\begin{tikzpicture}
\draw[fill=black] (0,0) circle (1pt);
\draw[fill=black] (1,0) circle (1pt);

\path (0,0) edge [bend left=20] (1,0);
\path (0,0) edge [bend right=20] (1,0);

\end{tikzpicture} \, \, \,, \, \,
\text{ and } \, \, \, \, \,
\begin{tikzpicture}
\draw[fill=black] (0,0) circle (1pt);
\draw[fill=black] (1,0) circle (1pt);
\draw[fill=black] (2,0) circle (1pt);
\draw[fill=black] (3,0) circle (1pt);

\path (0,0) edge [bend left=20] (1,0);
\path (0,0) edge [bend right=20] (1,0);
\path (2,0) edge [bend left=20] (3,0);
\path (2,0) edge [bend right=20] (3,0);

\end{tikzpicture}
\end{center}
\end{figure}
\end{remark}

\begin{proposition} \label{cover_preserve}
Consider the diagram
\begin{equation*}
\begin{tikzcd}
Y_{1} \arrow["p_{1}",d] &\arrow["\pi_{1}",l] \arrow["\pi_{2}",d] Y_{1} \times_{X}Y_{2}  \\
X & \arrow["p_{2}",l] Y_{2},
\end{tikzcd}
\end{equation*}
in the category of graphs.  If $p_{2}$ is a cover (as in Definition \ref{cover}), then so is $\pi_{1}:Y_{1}\times_{X}Y_{2} \rightarrow Y_{1}$.
\end{proposition}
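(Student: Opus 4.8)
The plan is to verify directly the two defining conditions of a cover from Definition \ref{cover} for the map $\pi_{1}$, exploiting the fact that $p_{2}$ is a cover fiber-by-fiber. Throughout I write a typical vertex of $Y_{1}\times_{X}Y_{2}$ as a pair $(w_{1},w_{2})$ with $p_{1}(w_{1}) = p_{2}(w_{2})$ and a typical directed edge as a pair $(e_{1},e_{2})$ with $p_{1}(e_{1}) = p_{2}(e_{2})$, exactly as in the construction recalled above, and I recall that $\pi_{1}(w_{1},w_{2}) = w_{1}$ and $\pi_{1}(e_{1},e_{2}) = e_{1}$.

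First I would establish surjectivity of $\pi_{1}$ on vertices. Given $w_{1}\in V_{Y_{1}}$, I need a vertex $w_{2}\in V_{Y_{2}}$ with $p_{2}(w_{2}) = p_{1}(w_{1})$, so that $(w_{1},w_{2})$ is a vertex of the pullback lying over $w_{1}$. Since $p_{2}$ is a cover, the map $p_{2}:V_{Y_{2}}\rightarrow V_{X}$ is surjective, so such a $w_{2}$ exists, and then $\pi_{1}(w_{1},w_{2}) = w_{1}$.

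The main step is the local bijection condition at an arbitrary vertex $(w_{1},w_{2})$ of the pullback. The relevant local edge set is
$$\mathbf{E}_{Y_{1}\times_{X}Y_{2},(w_{1},w_{2})} = \{(e_{1},e_{2}) \,:\, o(e_{1}) = w_{1},\ o(e_{2}) = w_{2},\ p_{1}(e_{1}) = p_{2}(e_{2})\},$$
and $\pi_{1}$ sends $(e_{1},e_{2})$ to $e_{1}\in \mathbf{E}_{Y_{1},w_{1}}$. For surjectivity I would start from $e_{1}\in \mathbf{E}_{Y_{1},w_{1}}$ and observe that $o(p_{1}(e_{1})) = p_{1}(w_{1}) = p_{2}(w_{2})$, so $p_{1}(e_{1})$ lies in $\mathbf{E}_{X,p_{2}(w_{2})}$; applying the local bijection $p_{2}|_{\mathbf{E}_{Y_{2},w_{2}}}:\mathbf{E}_{Y_{2},w_{2}}\rightarrow \mathbf{E}_{X,p_{2}(w_{2})}$ supplied by the covering property of $p_{2}$ produces a unique $e_{2}\in \mathbf{E}_{Y_{2},w_{2}}$ with $p_{2}(e_{2}) = p_{1}(e_{1})$, and then $(e_{1},e_{2})$ is the desired preimage. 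For injectivity, if $(e_{1},e_{2})$ and $(e_{1},e_{2}')$ both map to $e_{1}$, then $o(e_{2}) = w_{2} = o(e_{2}')$ and $p_{2}(e_{2}) = p_{1}(e_{1}) = p_{2}(e_{2}')$, so injectivity of that same local bijection forces $e_{2} = e_{2}'$. Together these show $\pi_{1}$ restricts to a bijection $\mathbf{E}_{Y_{1}\times_{X}Y_{2},(w_{1},w_{2})}\rightarrow \mathbf{E}_{Y_{1},w_{1}}$.

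I expect no serious obstacle: the whole argument is a fiberwise transport of the covering property of $p_{2}$ through the pullback. The one point that requires care — and is the crux of why the pullback inherits the covering structure — is the base-point compatibility $p_{1}(w_{1}) = p_{2}(w_{2})$ built into the definition of the pullback vertices, since it is precisely this identity that places $p_{1}(e_{1})$ in the local edge set $\mathbf{E}_{X,p_{2}(w_{2})}$ on which $p_{2}$ restricts to a bijection. I would take care to invoke this compatibility explicitly at that step.
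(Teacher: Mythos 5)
Your proof is correct and follows essentially the same argument as the paper: surjectivity on vertices via surjectivity of $p_{2}$ on $V_{Y_{2}}$, and the local bijection at $(w_{1},w_{2})$ by transporting both injectivity and surjectivity of $p_{2}|_{\mathbf{E}_{Y_{2},w_{2}}}$ through the compatibility $p_{1}(w_{1})=p_{2}(w_{2})$. The only difference is cosmetic (you check surjectivity of the local edge map before injectivity, the paper does the reverse), so nothing further is needed.
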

\begin{proof}
For the first property, let $w_{1} \in V_{Y_{1}}$.  Since $p_{2}:V_{Y_{2}} \rightarrow V_{X}$ is assumed to be surjective, there exists $w_{2} \in V_{Y_{2}}$ such that $p_{2}(w_{2}) = p_{1}(w_{1})$.  Then $(w_{1},w_{2}) \in V_{Y_{1}\times_{X}Y_{2}}$, and $\pi_{1}(w_{1},w_{2})=w_{1}$.

For the second property, let $w = (w_{1},w_{2}) \in V_{Y_{1}\times_{X}Y_{2}}$.  We show first that
\begin{equation} \label{eq1}
\pi_{1}|_{\mathbf{E}_{Y_{1}\times_{X}Y_{2}},w}:\mathbf{E}_{Y_{1}\times_{X}Y_{2},w} \rightarrow \mathbf{E}_{Y_{1},w_{1}}
\end{equation}
is injective.  Let $(e_{1},e_{2}), (\varepsilon_{1},\varepsilon_{2}) \in \mathbf{E}_{Y_{1}\times_{X}Y_{2},w}$, then in particular $o(e_{2}) = o(\varepsilon_{2}) = w_{2}$.  If $\pi_{1}(e_{1},e_{2}) = \pi_{1}(\varepsilon_{1},\varepsilon_{2})$, then $e_{1} = \varepsilon_{1}$.  It follows that $p_{2}(e_{2}) = p_{1}(e_{1}) = p_{1}(\varepsilon_{1}) = p_{2}(\varepsilon_{2})$ and thus $p_{2}(e_{2}) = p_{2}(\varepsilon_{2})$.  Since $e_{2},\varepsilon_{2} \in \mathbf{E}_{Y_{2},w_{2}}$, the assumption on $p_{2}$ implies that $e_{2} = \varepsilon_{2}$ and this shows that (\ref{eq1}) is injective.  In order to show that (\ref{eq1}) is surjective, let $e_{1} \in \mathbf{E}_{Y_{1},w_{1}}$.  Then $p_{1}(e_{1}) \in \mathbf{E}_{X,p_{1}(w_{1})} = \mathbf{E}_{X,p_{2}(w_{2})}$.  The assumption on $p_{2}$ implies that there exists $e_{2} \in \mathbf{E}_{Y_{2},w_{2}}$ such that $p_{2}(e_{2}) = p_{1}(e_{1})$.  But then $(e_{1},e_{2}) \in \mathbf{E}_{Y_{1}\times_{X}Y_{2},w}$ and $\pi_{1}(e_{1},e_{2}) = e_{1}$.  This ends the proof.
\end{proof}
\begin{proposition} \label{cover_fiber}
Consider the diagram
\begin{equation*}
\begin{tikzcd}
Y \arrow["f",d] &\arrow["\pi_{1}",l] \arrow["f_{1}",d] Y \times_{X}X_{1}  & \\
X & \arrow["p_{1}",l] X_{1} & \arrow["p_{2}",l]X_{2},
\end{tikzcd}
\end{equation*}
and let $p = p_{1} \circ p_{2}$.  Consider also the pullback $(Y \times_{X}X_{2},\pi,f_{2})$ of the diagram
\begin{equation*}
\begin{tikzcd}
Y \arrow["f",d] & \\
X & \arrow["p",l] X_{2}.
\end{tikzcd}
\end{equation*}
By the universal property satisfied by $(Y \times_{X}X_{1},\pi_{1},f_{1})$, we have a unique morphism of graphs $\pi_{2}:Y\times_{X}X_{2} \rightarrow Y \times_{X}X_{1}$ which fits into the following commutative diagram
\begin{equation*}
\begin{tikzcd}
Y \arrow["f",d] &\arrow["\pi_{1}",l] \arrow["f_{1}",d] Y \times_{X}X_{1}  & \arrow["\pi_{2}",l] \arrow["f_{2}",d] Y \times_{X}X_{2} \\
X & \arrow["p_{1}",l] X_{1} & \arrow["p_{2}",l]X_{2},
\end{tikzcd}
\end{equation*}
and for which $\pi = \pi_{1} \circ \pi_{2}$.  If $p_{1}$ and $p_{2}$ are covers, then so is $\pi_{2}$.
\end{proposition}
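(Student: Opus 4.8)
The plan is to reduce the statement to Proposition~\ref{cover_preserve} by recognizing $\pi_{2}$ as a base change of the cover $p_{2}$. The key structural fact is that fiber products compose: the graph $Y \times_{X} X_{2}$ is canonically isomorphic, as a graph over $Y \times_{X} X_{1}$ via $\pi_{2}$, to the fiber product $(Y \times_{X} X_{1}) \times_{X_{1}} X_{2}$ formed over $X_{1}$ out of the two morphisms $f_{1} : Y \times_{X} X_{1} \to X_{1}$ and $p_{2} : X_{2} \to X_{1}$. Granting this identification, Proposition~\ref{cover_preserve}, applied with base graph $X_{1}$ to the cover $p_{2}$, shows that the projection $(Y \times_{X} X_{1}) \times_{X_{1}} X_{2} \to Y \times_{X} X_{1}$ is a cover; under the identification this projection is exactly $\pi_{2}$, and the claim follows.

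The first step is to establish the associativity isomorphism. On vertices, an element of $(Y \times_{X} X_{1}) \times_{X_{1}} X_{2}$ is a triple $((w, x_{1}), x_{2})$ with $f(w) = p_{1}(x_{1})$ and $f_{1}(w, x_{1}) = p_{2}(x_{2})$; since $f_{1}(w, x_{1}) = x_{1}$, the second condition forces $x_{1} = p_{2}(x_{2})$, and the first then reads $f(w) = p_{1}(p_{2}(x_{2})) = p(x_{2})$. Hence $x_{1}$ is redundant and $((w, p_{2}(x_{2})), x_{2}) \mapsto (w, x_{2})$ is a bijection onto $V_{Y \times_{X} X_{2}}$. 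The identical computation on directed edges gives the edge-level bijection, and one checks that $o$, $t$, and the inversion map are respected. Tracing the projections through this isomorphism shows that the first-factor projection corresponds to $(w, x_{2}) \mapsto (w, p_{2}(x_{2}))$ on vertices and edges, which is precisely the morphism $\pi_{2}$ determined by the universal property (the one satisfying $\pi_{1} \circ \pi_{2} = \pi$ and $f_{1} \circ \pi_{2} = p_{2} \circ f_{2}$).

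If one prefers to bypass the associativity lemma, the same conclusion follows by a direct verification in the spirit of Proposition~\ref{cover_preserve}. From the universal property one reads off the explicit formula $\pi_{2}(w, x_{2}) = (w, p_{2}(x_{2}))$; surjectivity on vertices then follows from surjectivity of $p_{2}$, since given $(w, x_{1}) \in V_{Y \times_{X} X_{1}}$ one picks $x_{2}$ with $p_{2}(x_{2}) = x_{1}$ and notes $f(w) = p_{1}(x_{1}) = p(x_{2})$, so that $(w, x_{2})$ lifts it; and the local condition at $(w, x_{2})$, where $\pi_{2}$ sends $(e, \varepsilon) \mapsto (e, p_{2}(\varepsilon))$, reduces to the fact that $p_{2}$ restricts to a bijection $\mathbf{E}_{X_{2}, x_{2}} \to \mathbf{E}_{X_{1}, p_{2}(x_{2})}$. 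I expect no genuine obstacle here: the work is purely bookkeeping, namely extracting the explicit description of $\pi_{2}$ from the universal property and keeping the two-step fiber product straight. It is worth observing that the argument invokes only that $p_{2}$ is a cover; the hypothesis that $p_{1}$ is a cover, though natural in the tower setting, is not needed for this particular conclusion.
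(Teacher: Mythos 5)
Your proof is correct, but its key step differs from the paper's. The paper also works with the explicit formula $\pi_{2}(w,v_{2}) = (w,p_{2}(v_{2}))$ and reduces to Proposition \ref{cover_preserve}, but in a cancellation style: it first observes that $p = p_{1}\circ p_{2}$ is a cover, so that Proposition \ref{cover_preserve} makes both $\pi$ and $\pi_{1}$ covers, and then, restricting the factorization $\pi = \pi_{1}\circ\pi_{2}$ to directed-edge stars,
\[
\mathbf{E}_{Y\times_{X}X_{2},(w,v_{2})} \xrightarrow{\ \pi_{2}\ } \mathbf{E}_{Y\times_{X}X_{1},(w,p_{2}(v_{2}))} \xrightarrow{\ \pi_{1}\ } \mathbf{E}_{Y,w},
\]
it concludes that $\pi_{2}$ is a bijection on stars because $\pi$ and $\pi_{1}$ are (vertex surjectivity is checked directly, as you do). Your main argument instead identifies $Y\times_{X}X_{2}$ with the iterated fiber product $(Y\times_{X}X_{1})\times_{X_{1}}X_{2}$ and recognizes $\pi_{2}$ as the base change of $p_{2}$ along $f_{1}$, so a single application of Proposition \ref{cover_preserve} over the base $X_{1}$ finishes; your fallback direct verification likewise invokes only the covering properties of $p_{2}$. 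The bookkeeping cost is comparable, but your route buys a sharper statement: the hypothesis that $p_{1}$ is a cover is superfluous, whereas the paper's cancellation argument genuinely uses it (both to make $p$ a cover and to make $\pi_{1}$ a star bijection). Your closing observation to that effect is correct, and your identification of $\pi_{2}$ with the universal-property map (via uniqueness, checking $\pi_{1}\circ\pi_{2}=\pi$ and $f_{1}\circ\pi_{2}=p_{2}\circ f_{2}$) is the right way to make the reduction airtight.
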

\begin{proof}
Note that $p$ is a cover, since a composition of covers is a cover.  By Proposition \ref{cover_preserve}, we know that $\pi$ is a cover as well.  The graph morphism $\pi_{2}$ is given by $\pi_{2}(w,v_{2}) = (w,p_{2}(v_{2}))$.  Since $p_{2}$ is surjective on the vertices, it follows directly that $\pi_{2}$ is surjective on the vertices as well.  If $(w,v_{2})\in V_{Y \times_{X}X_{2}}$, and since $\pi_{1}$, $\pi_{2}$ are morphisms of graphs, they induce functions
$$\mathbf{E}_{Y \times_{X}X_{2},(w,v_{2})} \stackrel{\pi_{2}}{\longrightarrow} \mathbf{E}_{Y \times_{X}X_{1},(w,p_{2}(v_{2}))} \stackrel{\pi_{1}}{\longrightarrow} \mathbf{E}_{Y,w},$$
where $\pi_{1}$ is a bijection by Proposition \ref{cover_preserve}.  Since $\pi = \pi_{1} \circ \pi_{2}$ (more precisely the restriction of $\pi$ to the set of directed edges on the left of the displayed line above with target space the set of directed edges one the right of the displayed line above) and $\pi$ is also a bijection, we deduce that $\pi_{2}:  \mathbf{E}_{Y \times_{X}X_{2},(w,v_{2})} \rightarrow \mathbf{E}_{Y \times_{X}X_{1},(w,p_{2}(v_{2}))}$ is also a bijection, and the result follows.

\end{proof}

\begin{proposition} \label{preserve_galois}
Consider the diagram
\begin{equation*}
\begin{tikzcd}
Y_{1} \arrow["p_{1}",d] &\arrow["\pi_{1}",l] \arrow["\pi_{2}",d] Y_{1} \times_{X}Y_{2}  \\
X & \arrow["p_{2}",l] Y_{2},
\end{tikzcd}
\end{equation*}
in the category of graphs.  Assume furthermore that $Y_{1}\times_{X}Y_{2}$ is connected.  If $p_{2}:Y_{2} \rightarrow X$ is a Galois cover, then so is $\pi_{1}:Y_{1}\times_{X}Y_{2} \rightarrow Y_{1}$.  Furthermore, the map ${\rm Gal}(Y_{2}/X) \rightarrow {\rm Gal}(Y_{1}\times_{X}Y_{2}/Y_{1})$ given by $\sigma \mapsto \tilde{\sigma}$, where $\tilde{\sigma}$ is defined via
$$\tilde{\sigma}(w_{1},w_{2}) = (w_{1},\sigma(w_{2})) \text{ and } \tilde{\sigma}(e_{1},e_{2}) = (e_{1},\sigma(e_{2})) $$
is a group isomorphism.
\end{proposition}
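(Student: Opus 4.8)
The plan is to obtain that $\pi_{1}$ is a cover for free from Proposition~\ref{cover_preserve} (a Galois cover being in particular a cover), and then to reduce every remaining assertion to the Galois hypothesis on $p_{2}$ by identifying each fiber of $\pi_{1}$ with a fiber of $p_{2}$.

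First I would verify that, for each $\sigma \in \Gal(Y_{2}/X)$, the prescription $\tilde\sigma(w_{1},w_{2})=(w_{1},\sigma(w_{2}))$ and $\tilde\sigma(e_{1},e_{2})=(e_{1},\sigma(e_{2}))$ genuinely defines an automorphism of $Y_{1}\times_{X} Y_{2}$ lying over $Y_{1}$. Well-definedness, i.e.\ that $\tilde\sigma$ lands in the pullback, is immediate from $p_{2}\circ\sigma=p_{2}$, since then $p_{1}(w_{1})=p_{2}(w_{2})=p_{2}(\sigma(w_{2}))$; the three morphism axioms of \S\ref{gr} hold coordinatewise because $\sigma$ is a graph morphism; and $\tilde\sigma$ is bijective with inverse $\widetilde{\sigma^{-1}}$. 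Since $\pi_{1}\circ\tilde\sigma=\pi_{1}$ by construction, we get $\tilde\sigma\in\Aut_{\pi_{1}}(Y_{1}\times_{X} Y_{2}/Y_{1})$, and the one-line identity $\widetilde{\sigma\tau}=\tilde\sigma\,\tilde\tau$ shows that $\sigma\mapsto\tilde\sigma$ is a group homomorphism into $\Aut_{\pi_{1}}$.

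Next I would analyze the fibers of $\pi_{1}$. For $w_{1}\in V_{Y_{1}}$ one has $\pi_{1}^{-1}(w_{1})=\{w_{1}\}\times p_{2}^{-1}(p_{1}(w_{1}))$, which canonically identifies the fiber of $\pi_{1}$ over $w_{1}$ with the fiber of $p_{2}$ over $p_{1}(w_{1})$, intertwining the action of $\Gal(Y_{2}/X)$ via $\tilde\sigma$ on the former with its action via $\sigma$ on the latter. As $p_{2}$ is Galois, $\Gal(Y_{2}/X)$ acts transitively on $p_{2}^{-1}(p_{1}(w_{1}))$; transporting this through the identification shows that the subgroup $\{\tilde\sigma\}$ already acts transitively on $\pi_{1}^{-1}(w_{1})$. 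Since $Y_{1}\times_{X} Y_{2}$ is assumed connected and $\pi_{1}$ is a cover whose automorphism group acts transitively on every fiber, $\pi_{1}$ is a Galois cover, of degree $[Y_{2}:X]=|\Gal(Y_{2}/X)|$.

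It remains to upgrade $\sigma\mapsto\tilde\sigma$ to an isomorphism. Since $\pi_{1}$ is now Galois, $\Gal(Y_{1}\times_{X} Y_{2}/Y_{1})$ acts simply transitively on each fiber, whence $|\Gal(Y_{1}\times_{X} Y_{2}/Y_{1})|=|\pi_{1}^{-1}(w_{1})|=[Y_{2}:X]=|\Gal(Y_{2}/X)|$; it therefore suffices to prove injectivity. If $\tilde\sigma=\mathrm{id}$, pick any $w_{1}\in V_{Y_{1}}$ (the pullback is nonempty, being connected) and any $w_{2}\in p_{2}^{-1}(p_{1}(w_{1}))$ (nonempty since $p_{2}$ is surjective); then $\sigma(w_{2})=w_{2}$, and because $Y_{2}/X$ is a Galois cover of the connected graph $Y_{2}$, the unique lifting theorem \cite[Theorem~5.1]{Sunada:2013} forces $\sigma=\mathrm{id}_{Y_{2}}$. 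An injective homomorphism between finite groups of equal order is an isomorphism, which finishes the proof. The only real subtlety is the bookkeeping behind the fiber identification $\pi_{1}^{-1}(w_{1})\cong p_{2}^{-1}(p_{1}(w_{1}))$ and its compatibility with the two group actions; once that is in place, both the transitivity and the order count reduce transparently to the hypothesis that $p_{2}$ is Galois.
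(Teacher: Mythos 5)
Your proof is correct, and its overall architecture matches the paper's: you define the same map $\sigma\mapsto\tilde\sigma$, deduce the Galois property of $\pi_{1}$ from the transitivity of $\Gal(Y_{2}/X)$ on the fibers of $p_{2}$ (the paper argues this directly on pairs of vertices; your identification $\pi_{1}^{-1}(w_{1})=\{w_{1}\}\times p_{2}^{-1}(p_{1}(w_{1}))$ is just a cleaner packaging of the same observation), and prove injectivity exactly as the paper does, via the unique lifting theorem applied to $Y_{2}$. The one genuine divergence is the final step: the paper proves surjectivity of $\sigma\mapsto\tilde\sigma$ directly, taking an arbitrary $\tau\in\Gal(Y_{1}\times_{X}Y_{2}/Y_{1})$, matching it against some $\tilde\sigma$ at a single vertex, and invoking unique lifting once more to conclude $\tau=\tilde\sigma$; you instead replace surjectivity by a counting argument, using that a Galois cover has $|\Gal|$ equal to its degree and that your fiber identification forces $|\Gal(Y_{1}\times_{X}Y_{2}/Y_{1})|=[Y_{2}:X]=|\Gal(Y_{2}/X)|$, so that an injective homomorphism between finite groups of equal order is automatically an isomorphism. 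Both routes are sound: the paper's is constructive (it exhibits the preimage of each $\tau$) and uses nothing beyond the unique lifting theorem, while yours is shorter at that step but leans on the standard fact $[Y:X]=|\Gal(Y/X)|$ for Galois covers (which the paper records without proof, so you are entitled to it) and on the degree being the common fiber size for a cover with connected total space. One small bookkeeping point in your injectivity step: nonemptiness of $V_{Y_{1}}$ and of the pullback comes from connectedness of $Y_{1}\times_{X}Y_{2}$ by convention; the paper sidesteps this by quantifying over all vertices of the pullback rather than choosing one, but this does not affect correctness.
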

\begin{proof}
First, let $\sigma \in {\rm Gal}(Y_{2}/X)$ and define $\tilde{\sigma}$ as above.  We leave it to the reader to check that $\tilde{\sigma} \in {\rm Aut}_{\pi_{1}}(Y_{1}\times_{X}Y_{2}/X)$.  Let $(w_{1},w_{2}),(w_{1},w_{2}') \in \pi_{1}^{-1}(w_{1})$.  We have $p_{2}(w_{2}) = p_{2}(w_{2}')$ and thus $w_{2}, w_{2}'$ are in the same fiber of $p_{2}$.  By assumption, $p_{2}$ is a Galois cover and it follows that there exists $\sigma \in {\rm Gal}(Y_{2}/X)$ such that $\sigma(w_{2}) = w_{2}'$.  But then $\tilde{\sigma}(w_{1},w_{2}) = (w_{1},w_{2}')$ and this shows that ${\rm Aut}_{\pi_{1}}(Y_{1}\times_{X}Y_{2}/Y_{1})$ acts transitively on the fibers of $\pi_{1}$.  Combining with Proposition \ref{cover_preserve}, we deduce that $\pi_{1}$ is also a Galois cover.  We now have a well-defined function 
\begin{equation} \label{gr_iso}
{\rm Gal}(Y_{2}/X) \rightarrow {\rm Gal}(Y_{1}\times_{X}Y_{2}/Y_{1}),
\end{equation}
given by $\sigma \mapsto \tilde{\sigma}$.  Let us show first that (\ref{gr_iso}) is injective.  If $\tilde{\sigma}_{1} = \tilde{\sigma}_{2}$ and if $(w_{1},w_{2}) \in V_{Y_{1}\times_{X}Y_{2}}$, then we have $\sigma_{1}(w_{2}) = \sigma_{2}(w_{2})$.  Since $Y_{2}/X$ is assumed to be Galois, $Y_{2}$ is connected by definition.  The unique lifting theorem (\cite[Theorem 5.1]{Sunada:2013}) implies that $\sigma_{1} = \sigma_{2}$, and this shows the injectivity of (\ref{gr_iso}).  For the surjectivity of (\ref{gr_iso}), let $\tau \in {\rm Gal}(Y_{1}\times_{X}Y_{2}/Y_{1})$ and let $(w_{1},w_{2}) \in V_{Y_{1}\times_{X}Y_{2}}$.  Then $\tau(w_{1},w_{2}) = (w_{1}',w_{2}')$ for some $(w_{1}',w_{2}') \in V_{Y_{1}\times_{X}Y_{2}}$.  Since $\tau$ commutes with $\pi_{1}$, we have $w_{1} = w_{1}'$.  Furthermore, $w_{2}$ and $w_{2}'$ are in the same fiber of $p_{2}$, and since $Y_{2}/X$ is a Galois cover, there exists $\sigma \in {\rm Gal}(Y_{2}/X)$ such that $\sigma(w_{2}) = w_{2}'$.  Therefore, $\tau(w_{1},w_{2}) = \tilde{\sigma}(w_{1},w_{2})$ and the unique lifting theorem implies that $\tau = \tilde{\sigma}$.  The fact that (\ref{gr_iso}) is a group morphism is clear and this ends the proof.
\end{proof}

\begin{proposition} \label{all_disjoint}
Consider the diagram
\begin{equation*}
\begin{tikzcd}
Y_{1} \arrow["p_{1}",d] &\arrow["\pi_{1}",l] \arrow["\pi_{2}",d] Y_{1} \times_{X}Y_{2}  \\
X & \arrow["p_{2}",l] Y_{2},
\end{tikzcd}
\end{equation*}
in the category of graphs.  Assume furthermore that $Y_{1}\times_{X}Y_{2}$ is connected and let $p = p_{i} \circ \pi_{i}, (i=1,2)$.  If both $p_{1}$ and $p_{2}$ are Galois cover, then so is 
$$p:Y_{1} \times_{X} Y_{2} \rightarrow X.$$  
Furthermore, the map ${\rm Gal}(Y_{1}/X) \times {\rm Gal}(Y_{2}/X) \rightarrow {\rm Gal}(Y_{1}\times_{X}Y_{2}/X)$ given by $\sigma  =(\sigma_{1},\sigma_{2}) \mapsto \tilde{\sigma}$, where $\tilde{\sigma}$ is defined via
$$\tilde{\sigma}(w_{1},w_{2}) = (\sigma_{1}(w_{1}),\sigma_{2}(w_{2})) \text{ and } \tilde{\sigma}(e_{1},e_{2}) = (\sigma_{1}(e_{1}),\sigma_{2}(e_{2})) $$
is a group isomorphism.
\end{proposition}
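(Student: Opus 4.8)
The plan is to bootstrap from Propositions \ref{cover_preserve} and \ref{preserve_galois} rather than argue everything from scratch. First I would check that the assignment $\tilde{\sigma}$ genuinely lands in ${\rm Aut}_{p}(Y_{1}\times_{X}Y_{2}/X)$. Since $\sigma_{i} \in {\rm Gal}(Y_{i}/X)$ satisfies $p_{i}\circ\sigma_{i} = p_{i}$, for any vertex $(w_{1},w_{2})$ of the pullback one has $p_{1}(\sigma_{1}(w_{1})) = p_{1}(w_{1}) = p_{2}(w_{2}) = p_{2}(\sigma_{2}(w_{2}))$, so $(\sigma_{1}(w_{1}),\sigma_{2}(w_{2}))$ is again a vertex of $Y_{1}\times_{X}Y_{2}$; the identical computation works on edges, and compatibility with $o$, $t$, and $\bar{\cdot}$ is immediate from the coordinatewise definitions. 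Thus $\tilde{\sigma}$ is a graph automorphism, it is invertible with inverse $\widetilde{(\sigma_{1}^{-1},\sigma_{2}^{-1})}$, and $p\circ\tilde{\sigma} = p$, so $\tilde{\sigma}\in{\rm Aut}_{p}(Y_{1}\times_{X}Y_{2}/X)$.

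Next I would show that $p$ is Galois. By Proposition \ref{cover_preserve} the map $p = p_{1}\circ\pi_{1}$ is a cover (a composition of covers), and $Y_{1}\times_{X}Y_{2}$ is connected by hypothesis, so only transitivity on fibers remains. Given $(w_{1},w_{2}),(w_{1}',w_{2}')$ with $p(w_{1},w_{2}) = p(w_{1}',w_{2}')$, I have $p_{1}(w_{1}) = p_{1}(w_{1}')$ and $p_{2}(w_{2}) = p_{2}(w_{2}')$; since $p_{1}$ and $p_{2}$ are Galois I may choose $\sigma_{1}\in{\rm Gal}(Y_{1}/X)$ and $\sigma_{2}\in{\rm Gal}(Y_{2}/X)$ with $\sigma_{1}(w_{1}) = w_{1}'$ and $\sigma_{2}(w_{2}) = w_{2}'$, and then $\tilde{\sigma}$ carries $(w_{1},w_{2})$ to $(w_{1}',w_{2}')$. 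Hence ${\rm Aut}_{p}$ acts transitively on fibers and $p$ is Galois, so ${\rm Gal}(Y_{1}\times_{X}Y_{2}/X) = {\rm Aut}_{p}(Y_{1}\times_{X}Y_{2}/X)$.

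For the isomorphism statement, let $\Phi:{\rm Gal}(Y_{1}/X)\times{\rm Gal}(Y_{2}/X)\rightarrow{\rm Gal}(Y_{1}\times_{X}Y_{2}/X)$ send $(\sigma_{1},\sigma_{2})\mapsto\tilde{\sigma}$. That $\Phi$ is a group homomorphism is immediate from the coordinatewise action. For injectivity, if $\tilde{\sigma}$ is the identity then for each vertex $(w_{1},w_{2})$ one gets $\sigma_{1}(w_{1}) = w_{1}$; because $p_{2}$ is surjective on vertices, every $w_{1}\in V_{Y_{1}}$ arises as a first coordinate, so $\sigma_{1}$ fixes a vertex of the connected graph $Y_{1}$, whence $\sigma_{1} = 1$ by the unique lifting theorem \cite[Theorem 5.1]{Sunada:2013}, and symmetrically $\sigma_{2} = 1$.

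The cleanest route to surjectivity is a cardinality count, and this is the only step where I expect to think rather than compute. Applying Proposition \ref{preserve_galois} to the Galois cover $p_{2}$ shows $\pi_{1}$ is Galois with ${\rm Gal}(Y_{1}\times_{X}Y_{2}/Y_{1})\cong{\rm Gal}(Y_{2}/X)$, so $[Y_{1}\times_{X}Y_{2}:Y_{1}] = [Y_{2}:X]$; multiplying degrees through the tower $Y_{1}\times_{X}Y_{2}\rightarrow Y_{1}\rightarrow X$ gives $[Y_{1}\times_{X}Y_{2}:X] = [Y_{1}:X][Y_{2}:X]$, and since all three covers are Galois this reads $|{\rm Gal}(Y_{1}\times_{X}Y_{2}/X)| = |{\rm Gal}(Y_{1}/X)\times{\rm Gal}(Y_{2}/X)|$. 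As $\Phi$ is an injective homomorphism of finite groups of equal order, it is an isomorphism. The only subtlety to keep in view is the multiplicativity of degrees together with the identity $[Y:X] = |{\rm Gal}(Y/X)|$ for Galois covers, both already recorded in \S\ref{gr}; the rest is the bookkeeping above.
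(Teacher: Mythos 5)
Your proof is correct, and for most steps (well-definedness of $\tilde{\sigma}$, $p$ being a cover via Proposition \ref{cover_preserve} and composition, transitivity on fibers, and injectivity via the unique lifting theorem) it coincides with the paper's argument. Where you genuinely diverge is surjectivity: the paper handles it the same way as in Proposition \ref{preserve_galois}, namely by a direct construction --- given $\tau \in {\rm Gal}(Y_{1}\times_{X}Y_{2}/X)$, evaluate it at a single vertex $(w_{1},w_{2})$, use transitivity of ${\rm Gal}(Y_{1}/X)$ and ${\rm Gal}(Y_{2}/X)$ on the fibers of $p_1$ and $p_2$ to produce $\sigma = (\sigma_{1},\sigma_{2})$ with $\tilde{\sigma}$ agreeing with $\tau$ at that vertex, and then conclude $\tau = \tilde{\sigma}$ by the unique lifting theorem \cite[Theorem 5.1]{Sunada:2013}. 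You instead count: Proposition \ref{preserve_galois} gives $[Y_{1}\times_{X}Y_{2}:Y_{1}] = [Y_{2}:X]$, multiplicativity of degrees through the tower gives $[Y_{1}\times_{X}Y_{2}:X] = [Y_{1}:X][Y_{2}:X]$, and an injective homomorphism between finite groups of equal order is an isomorphism. Both routes work; yours avoids constructing a preimage and elegantly recycles Proposition \ref{preserve_galois}, while the paper's is more self-contained and constructive, needing only the lifting theorem rather than degree bookkeeping. One small caveat: multiplicativity of degrees for a tower of covers of connected graphs is not actually recorded in \S \ref{gr} (only the definition of degree and the identity $[Y:X] = \abs{{\rm Gal}(Y/X)}$ are), so strictly speaking you should justify it by the one-line fiber count $\abs{p^{-1}(v)} = \sum_{w_{1}\in p_{1}^{-1}(v)} \abs{\pi_{1}^{-1}(w_{1})}$; this uses that $Y_{1}$ is connected, which holds because $p_{1}$ is Galois.
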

\begin{proof}
First, note that $\pi$ is a cover by using Proposition \ref{cover_preserve} and the fact that compositions of covers is a cover.  We leave it to the reader to check that $\tilde{\sigma}$ is an isomorphism of graphs and that $\tilde{\sigma} \in {\rm Aut}(Y_{1} \times_{X}Y_{2}/X)$.  To show that $Y_{1}\times_{X}Y_{2}/X$ is Galois, we proceed as follows.  Let $w = (w_{1},w_{2})$ and $w' = (w_{1}',w_{2}')$ be two vertices in the same fiber of $p$.  Then, we have $p_{i}(w_{i}) = w_{i}'$ for $i=1,2$.  Since both $Y_{1}/X$ and $Y_{2}/X$ are assumed to be Galois, there exists $\sigma_{i} \in {\rm Gal}(Y_{i}/X)$ satisfying $\sigma_{i}(w_{i}) = w_{i}'$ for $i=1,2$.  Letting $\sigma = (\sigma_{1},\sigma_{2})$, we have $\tilde{\sigma}(w)=w'$ and this shows that $Y_{1}\times_{X}Y_{2}/X$ is Galois.  Using the unique lifting theorem \cite[Theorem 5.1]{Sunada:2013}, one shows using a similar argument as in the proof of Proposition~\ref{preserve_galois} that the map ${\rm Gal}(Y_{1}/X) \times {\rm Gal}(Y_{2}/X) \rightarrow {\rm Gal}(Y_{1}\times_{X}Y_{2}/X)$ given by $\sigma  =(\sigma_{1},\sigma_{2}) \mapsto \tilde{\sigma}$ is a group isomorphism.  The details are left to the reader.
\end{proof}

Let now $X$ and $Y$ be connected graphs, and $f:Y\rightarrow X$ a morphism of graphs.  If we start with a $\mathbb{Z}_{\ell}$-tower
\begin{equation} \label{abs_tow}
X = X_{0} \leftarrow X_{1} \leftarrow \ldots \leftarrow X_{n} \leftarrow \ldots,
\end{equation}
and if we assume that the graphs $Y \times_{X}X_{n}$ are connected for all nonnegative integers $n$, it follows from Propositions \ref{cover_fiber}, \ref{preserve_galois}, and the Galois correspondence that we can pull back the tower (\ref{abs_tow}) along $f$ to obtain a $\mathbb{Z}_{\ell}$-tower
$$Y \leftarrow Y \times_{X}X_{1} \leftarrow Y \times_{X}X_{2} \leftarrow \ldots \leftarrow Y \times_{X}X_{n} \leftarrow \ldots $$
which is above $Y$ now.

\subsection{Pullbacks and voltage assignments} \label{pull_b_va}
If $f:Y\rightarrow X$ is a morphism of graphs for which the induced map $f:\mathbf{E}_{Y} \rightarrow \mathbf{E}_{X}$ is surjective, then any orientation $S$ for $X$ induces a unique orientation $S_{Y}$ for $Y$ by setting
$$S_{Y} = f^{-1}(S). $$
Let now $G$ be a finite abelian group, $\alpha:S \rightarrow G$ a function, and consider the diagram
\begin{equation*}
\begin{tikzcd}
Y \arrow["f",d] & \\
X & \arrow["p",l] X(G,S,\alpha),
\end{tikzcd}
\end{equation*}
where the morphism $p$ is given by (\ref{proj_map}) above.  We have an induced function $\alpha \circ f:S_{Y} \rightarrow G$ and a natural graph morphism
$$\pi_{1}:Y(G,S_{Y},\alpha \circ f) \rightarrow Y, $$
as defined above in (\ref{proj_map}) again.  Furthermore, define $\pi_{2}:Y(G,S_{Y},\alpha \circ f) \rightarrow X(G,S,\alpha)$ via
$$\pi_{2}(w,\sigma) = (f(w),\sigma) \text{ and } \pi_{2}(e,\sigma) = (f(e),\sigma). $$
We leave it to the reader to check that $\pi_{2}$ is a morphism of graphs and that the diagram
\begin{equation*}
\begin{tikzcd}
Y \arrow["f",d] & \arrow["\pi_{1}",l] \arrow["\pi_{2}",d] Y(G,S_{Y},\alpha \circ f) \\
X & \arrow["p",l] X(G,S,\alpha).
\end{tikzcd}
\end{equation*}
commutes.
\begin{proposition} \label{concrete_fiber}
With the notation as above, the triple $(Y(G,S_{Y},\alpha \circ f),\pi_{1},\pi_{2})$ is a pullback for the diagram
\begin{equation*}
\begin{tikzcd}
Y \arrow["f",d] & \\
X & \arrow["p",l] X(G,S,\alpha).
\end{tikzcd}
\end{equation*}
\end{proposition}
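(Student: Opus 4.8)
The plan is to avoid verifying the universal property from scratch, and instead to exhibit an explicit isomorphism of graphs between $Y(G,S_{Y},\alpha \circ f)$ and the concrete pullback $Y \times_{X} X(G,S,\alpha)$ constructed at the beginning of this section, compatible with the two projection maps. Since a cone that is isomorphic, as a cone, to a pullback cone is itself a pullback, and since $(Y \times_{X} X(G,S,\alpha),q_{1},q_{2})$ was already shown to satisfy the universal property, this will finish the proof.

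First I would unwind the concrete pullback. Write $P := Y \times_{X} X(G,S,\alpha)$ with its canonical projections $q_{1}:P \to Y$ and $q_{2}:P \to X(G,S,\alpha)$. A vertex of $P$ is a pair $(w,(v,\sigma))$ with $w \in V_{Y}$, $(v,\sigma) \in V_{X}\times G$, and $f(w) = p(v,\sigma) = v$; thus $v = f(w)$ is \emph{forced}, and the assignment $(w,\sigma) \mapsto (w,(f(w),\sigma))$ identifies $V_{P}$ canonically with $V_{Y}\times G = V_{Y(G,S_{Y},\alpha \circ f)}$. Identical reasoning, using $p(e,\sigma) = e$ so that $f(e)$ is forced, identifies $\mathbf{E}_{P}$ with $\mathbf{E}_{Y}\times G = \mathbf{E}_{Y(G,S_{Y},\alpha \circ f)}$. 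I denote by $\Phi:Y(G,S_{Y},\alpha \circ f) \to P$ the resulting bijection $(w,\sigma) \mapsto (w,(f(w),\sigma))$ and $(e,\sigma) \mapsto (e,(f(e),\sigma))$, which is bijective on vertices and on edges by construction.

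The only step requiring genuine bookkeeping is checking that $\Phi$ is a morphism of graphs, and this is where I expect the main (entirely routine) obstacle to lie. On the $P$ side, the edge $(e,(f(e),\sigma))$ has origin $(o(e),(o(f(e)),\sigma))$, terminus $(t(e),(t(f(e)),\sigma\cdot\alpha(f(e))))$, and inversion $(\bar{e},(\overline{f(e)},\sigma\cdot\alpha(f(e))))$. Using the graph-morphism axioms $o(f(e)) = f(o(e))$, $t(f(e)) = f(t(e))$, and $\overline{f(e)} = f(\bar{e})$, together with the identity $\alpha(f(e)) = (\alpha \circ f)(e)$, these become $(o(e),(f(o(e)),\sigma))$, $(t(e),(f(t(e)),\sigma\cdot(\alpha\circ f)(e)))$, and $(\bar{e},(f(\bar{e}),\sigma\cdot(\alpha\circ f)(e)))$, which under the identification above are exactly $(o(e),\sigma)$, $(t(e),\sigma\cdot(\alpha\circ f)(e))$, and $(\bar{e},\sigma\cdot(\alpha\circ f)(e))$ — precisely the origin, terminus, and inverse prescribed by the voltage construction $Y(G,S_{Y},\alpha\circ f)$. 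Here I would also note the small compatibility point that the extension convention $\alpha(\bar{s}) = \alpha(s)^{-1}$ on $X$ transports correctly to $Y$ because $S_{Y} = f^{-1}(S)$ and $f(\bar{s}) = \overline{f(s)}$, so the extension of $\alpha\circ f$ to $\mathbf{E}_{Y}$ agrees with $\alpha \circ f$ computed using the extension of $\alpha$ to $\mathbf{E}_{X}$.

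Finally I would record the compatibility with the structure maps: directly from the definitions $q_{1}(w,(f(w),\sigma)) = w = \pi_{1}(w,\sigma)$ and $q_{2}(w,(f(w),\sigma)) = (f(w),\sigma) = \pi_{2}(w,\sigma)$, with the analogous identities on edges, so that $q_{1}\circ\Phi = \pi_{1}$ and $q_{2}\circ\Phi = \pi_{2}$. Thus $\Phi$ is an isomorphism of cones from $(Y(G,S_{Y},\alpha\circ f),\pi_{1},\pi_{2})$ to the pullback cone $(P,q_{1},q_{2})$, and the former inherits the universal property, completing the proof.
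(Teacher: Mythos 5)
Your proposal is correct, but it takes a genuinely different route from the paper. The paper verifies the universal property directly: given an arbitrary test cone $(Z,\alpha_{1},\alpha_{2})$ with $f\circ\alpha_{1}=p\circ\alpha_{2}$, it writes $\alpha_{2}(w)=(V_{2}(w),G_{2}(w))$ and $\alpha_{2}(e)=(E_{2}(e),G_{2}(e))$, defines the factoring morphism $\phi:Z\rightarrow Y(G,S_{Y},\alpha\circ f)$ by $w\mapsto(\alpha_{1}(w),G_{2}(w))$, $e\mapsto(\alpha_{1}(e),G_{2}(e))$, and leaves to the reader the check that $\phi$ is the unique morphism with $\alpha_{i}=\pi_{i}\circ\phi$. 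You instead identify $Y(G,S_{Y},\alpha\circ f)$ with the concrete fibered product $Y\times_{X}X(G,S,\alpha)$ via an explicit cone isomorphism $\Phi$, using that the first coordinate of $V_{X}\times G$ (resp.\ $\mathbf{E}_{X}\times G$) is forced to be $f(w)$ (resp.\ $f(e)$), and then transport the universal property along $\Phi$. Your route buys a sharper statement --- the derived graph \emph{is} the fibered product, with the isomorphism written down, and you carefully verify the morphism axioms including the compatibility of the extension convention $\alpha(\bar{s})=\alpha(s)^{-1}$ under $S_{Y}=f^{-1}(S)$, a point the paper's proof never has to confront explicitly --- but it leans on two prior facts: that $(Y\times_{X}Y_{2},\pi_{1},\pi_{2})$ satisfies the universal property (stated at the start of \S\ref{pullb} with details left to the reader) and the standard lemma that a cone isomorphic to a pullback cone is a pullback. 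The paper's direct verification is more self-contained in that it never needs the concrete fibered product at all, though it delegates its own routine checks (that $\phi$ is a graph morphism and is unique) to the reader. Both arguments are complete at the same level of rigor; yours is arguably more informative, the paper's more economical.
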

\begin{proof}
Let $(Z,\alpha_{1},\alpha_{2})$, where $Z$ is a graph, and let $\alpha_{1}:Z \rightarrow Y$, $\alpha_{2}:Z \rightarrow X(G,S,\alpha)$ be morphisms of graphs satisfying $f \circ \alpha_{1} = p \circ \alpha_{2}$.  Then, for $w \in V_{Z}$ and $e \in \mathbf{E}_{Z}$, one has
$$\alpha_{2}(w) =  (V_{2}(w), G_{2}(w)) \text{ and } \alpha_{2}(e) = (E_{2}(e), G_{2}(e)),$$
for some $V_{2}(w) \in V_{X}$, $E_{2}(e) \in \mathbf{E}_{X}$, and some $G_{2}(w), G_{2}(e) \in G$.  Define 
$$\phi:Z \rightarrow Y(G,S_{Y},\alpha \circ f)$$ 
via
$$w \mapsto (\alpha_{1}(w),G_{2}(w)) \text{ and } e \mapsto (\alpha_{1}(e), G_{2}(e)). $$
We leave it to the reader to check that $\phi$ is the unique morphism of graphs for which $\alpha_{i} = \pi_{i} \circ \phi$ for $i=1,2$, and this concludes the proof.
\end{proof}

Starting now with a function $\alpha:S \rightarrow \mathbb{Z}_{\ell}$ for which Assumption \ref{main assumption} is satisfied, let us assume that Assumption \ref{main assumption} is satisfied as well for $\alpha \circ f:S_{Y}\rightarrow \mathbb{Z}_{\ell} $.  Then, we obtain two $\mathbb{Z}_{\ell}$-towers that fit into the following diagram
\begin{equation} \label{con_pu}
\begin{tikzcd}[sep=1.8em, font=\small]
Y \arrow["f",d] & \arrow[l] \arrow[d] Y(\mathbb{Z}/\ell\mathbb{Z},S_{Y},\alpha \circ f_{/1})& \arrow[l] \ldots&\arrow[l]\arrow[d] Y(\mathbb{Z}/\ell^{n}\mathbb{Z},S_{Y},\alpha \circ f_{/n}) & \arrow[l] \ldots \\
X & \arrow[l] X(\mathbb{Z}/\ell\mathbb{Z},S,\alpha_{/1})  &\arrow[l] \ldots &\arrow[l]X(\mathbb{Z}/\ell^{n}\mathbb{Z},S,\alpha_{/n}) & \arrow[l] \ldots
\end{tikzcd}
\end{equation}
We would like to understand how the Iwasawa invariants for the $\mathbb{Z}_{\ell}$-tower above $X$ are related to the Iwasawa invariants for the $\mathbb{Z}_{\ell}$-tower above $Y$.  
\begin{remark} \label{conn_propagated}
If the map $f:Y \rightarrow X$ in the diagram (\ref{con_pu}) is a cover, and if we assume that $\alpha \circ f:S_{Y}\rightarrow \mathbb{Z}_{\ell}$ satisfies Assumption \ref{main assumption}, then Assumption \ref{main assumption} for $\alpha:S \rightarrow \mathbb{Z}_{\ell}$ is also satisfied by Propositions \ref{cover_preserve}, \ref{concrete_fiber}, and Remark \ref{con_cover}.
\end{remark}

Before stating and proving our main theorem, namely Theorem \ref{main} below, we end this section with the following proposition which will be used in the proof of Theorem \ref{main}.

\begin{proposition} \label{combine_volt}
Let $X$ be a connected graph, and let $G_{1}$, $G_{2}$ be two finite abelian groups.  Let also $S$ be an orientation for $X$.  Suppose we are given two functions $\beta:S \rightarrow G_{2}$, $\alpha:S \rightarrow G_{1}$ and assume that both graphs $X(G_{2},S,\beta)$ and $X(G_{1},S,\alpha)$ are connected.  Let $Y = X(G_{2},S,\beta)$, and consider the diagram
\begin{equation*}
\begin{tikzcd}
Y \arrow["p_{2}",d] & \arrow["\pi_{1}",l]  \arrow["\pi_{2}",d] Y(G_{1},S_{Y},\alpha \circ p_{2}) \\
X & \arrow["p_{1}",l] X(G_{1},S,\alpha),
\end{tikzcd}
\end{equation*}
coming from Proposition \ref{concrete_fiber}.  We set $p = p_{1} \circ \pi_{2} = p_{2} \circ \pi_{1}$.  Consider now the function $\gamma:S \rightarrow G_{2} \times G_{1}$ given by
$$s \mapsto \gamma(s) = (\beta(s),\alpha(s)), $$
and the natural morphism of graphs $\pi:X(G_{2} \times G_{1},S,\gamma) \rightarrow X$.  Then, there is an isomorphism of graphs
$$\phi: X(G_{2} \times G_{1},S,\gamma) \rightarrow Y(G_{1},S_{Y},\alpha \circ p_{2})$$
satisfying $p \circ \phi = \pi$.
\end{proposition}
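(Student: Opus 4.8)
The plan is to construct the isomorphism $\phi$ explicitly on vertices and edges, verify it is a well-defined morphism of graphs, exhibit an inverse (or check directly that it is bijective), and then confirm compatibility with the projection maps. Recall that $X(G_2 \times G_1, S, \gamma)$ has vertex set $V_X \times (G_2 \times G_1)$ and edge set $\mathbf{E}_X \times (G_2 \times G_1)$, while $Y(G_1, S_Y, \alpha \circ p_2)$ has vertices $V_Y \times G_1 = (V_X \times G_2) \times G_1$ and edges $\mathbf{E}_Y \times G_1 = (\mathbf{E}_X \times G_2) \times G_1$, since $Y = X(G_2,S,\beta)$. So the underlying sets already match up to reassociating the Cartesian product. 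Concretely, I would define
\begin{equation*}
\phi(v,(\tau,\sigma)) = ((v,\tau),\sigma) \quad \text{and} \quad \phi(e,(\tau,\sigma)) = ((e,\tau),\sigma),
\end{equation*}
for $v \in V_X$, $e \in \mathbf{E}_X$, $\tau \in G_2$, $\sigma \in G_1$, where on the right $(v,\tau) \in V_Y$ and $(e,\tau) \in \mathbf{E}_Y$ are the corresponding vertex and edge of $Y$.

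First I would check that $\phi$ respects the incidence structure. On the source, the edge $(e,(\tau,\sigma))$ runs from $(o(e),(\tau,\sigma))$ to $(t(e),(\tau,\sigma)\cdot\gamma(e))$, and since $\gamma(e)=(\beta(e),\alpha(e))$ the target is $(t(e),(\tau\cdot\beta(e),\sigma\cdot\alpha(e)))$. On the target, the edge $((e,\tau),\sigma)$ in $Y(G_1,S_Y,\alpha\circ p_2)$ originates at $((o(e),\tau),\sigma)$ and terminates at $((t(e),\tau\cdot\beta(e)),\sigma\cdot\alpha(p_2(e,\tau)))$; here the voltage is $\alpha\circ p_2$ applied to the edge $(e,\tau)$ of $Y$, and $p_2(e,\tau)=e$, so this equals $((t(e),\tau\cdot\beta(e)),\sigma\cdot\alpha(e))$. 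These two descriptions of origin and terminus match exactly under $\phi$, so $\phi$ carries edges to edges compatibly with $o$ and $t$. I would then verify $\phi(\overline{(e,(\tau,\sigma))}) = \overline{\phi(e,(\tau,\sigma))}$ using the inversion formula $\overline{(e,\delta)}=(\bar e, \delta\cdot\gamma(e))$ on each side, which is the same matching of voltages carried out on the inversion map; the computation is routine. The main point to be careful about is the orientation $S_Y = f^{-1}(S)$ used to define $\alpha\circ p_2$ and the consistency of the voltage extension $\alpha(\bar s)=\alpha(s)^{-1}$ with $\beta(\bar s)=\beta(s)^{-1}$, which together guarantee the two inversion computations agree.

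Bijectivity is immediate because $\phi$ is simply reassociation of the product sets $V_X \times G_2 \times G_1$ and $\mathbf{E}_X \times G_2 \times G_1$, so the obvious reverse reassociation gives a two-sided inverse which is visibly a graph morphism by the same check. Finally, compatibility $p\circ\phi=\pi$ follows by unwinding definitions: $\pi$ sends $(v,(\tau,\sigma))\mapsto v$, while $p=p_2\circ\pi_1$ sends $((v,\tau),\sigma)\mapsto (v,\tau)\mapsto v$, so $p(\phi(v,(\tau,\sigma)))=v=\pi(v,(\tau,\sigma))$, and similarly on edges. I expect the main obstacle to be purely bookkeeping rather than conceptual: keeping straight the three-layered Cartesian products and confirming that the voltage of an edge $(e,\tau)$ of $Y$ under $\alpha\circ p_2$ is precisely $\alpha(e)$, so that the $G_1$-coordinate of the target vertex agrees on both sides. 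Once the incidence and inversion identities are checked on directed edges, well-definedness, bijectivity, and the relation $p\circ\phi=\pi$ all drop out essentially for free.
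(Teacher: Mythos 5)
Your proof is correct and follows exactly the route the paper takes: the paper's proof simply observes that the vertex and edge sets of the two graphs agree up to reassociating the Cartesian product and declares the isomorphism obvious, leaving the details to the reader. You have supplied precisely those details — the explicit reassociation map, the incidence and inversion checks (including the key point that $(\alpha\circ p_{2})(e,\tau)=\alpha(e)$), and the compatibility $p\circ\phi=\pi$ — all correctly.
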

\begin{proof}
The vertex and directed edge sets of $X(G_{2}\times G_{1},S,\gamma)$ are
$$V_{X} \times (G_{2} \times G_{1}) \text{ and }  \mathbf{E}_{X} \times (G_{2} \times G_{1}), $$
whereas the vertex and directed edge sets of $Y(G_{1},S_{Y},\alpha \circ p_{2})$ are
$$ (V_{X} \times G_{2}) \times G_{1} \text{ and } (\mathbf{E}_{X} \times G_{2}) \times G_{1}.$$
It is then obvious what the isomorphism of graphs 
$$\phi:X(G_{2} \times G_{1},S,\gamma) \rightarrow Y(G_{1},S_{Y},\alpha \circ p_{2})$$ 
is, and we leave the details to the reader.
\end{proof}

\section{The analogue of Kida's formula} \label{ki_main}
\subsection{Main result on Kida's formula} In this section, we state and prove the main result.
\begin{theorem} \label{main}
Let $\ell$ be a rational prime, $X$ a connected graph satisfying Assumption \ref{euler assumption}, $S$ an orientation for $X$, and $\alpha:S \rightarrow \mathbb{Z}_{\ell}$ a function.  Let $p:Y \rightarrow X$ be a Galois cover for which $G= {\rm Gal}(Y/X)$ is a finite $\ell$-group. Consider the orientation $S_{Y}$ of $Y$, the induced function $\alpha \circ p:S_{Y} \rightarrow \mathbb{Z}_{\ell}$, and assume that $\alpha \circ p$ satisfies Assumption \ref{main assumption}.  By Remark \ref{conn_propagated}, $\alpha:S \rightarrow \mathbb{Z}_{\ell}$ also satisfies Assumption \ref{main assumption}, and we have two $\mathbb{Z}_{\ell}$-towers that fit into the diagram  
\begin{equation*}
\begin{tikzcd}
Y \arrow["p",d] & \arrow[l] \arrow[d] Y(\mathbb{Z}/\ell\mathbb{Z},S_{Y},\alpha \circ p_{/1})& \arrow[l] \ldots&\arrow[l]\arrow[d] Y(\mathbb{Z}/\ell^{n}\mathbb{Z},S_{Y},\alpha \circ p_{/n}) & \arrow[l] \ldots \\
X & \arrow[l] X(\mathbb{Z}/\ell\mathbb{Z},S,\alpha_{/1})  &\arrow[l] \ldots &\arrow[l]X(\mathbb{Z}/\ell^{n}\mathbb{Z},S,\alpha_{/n}) & \arrow[l] \ldots
\end{tikzcd}
\end{equation*}
Then, $\mu(X,\alpha) = 0$ if and only if $\mu(Y,\alpha \circ p)=0$, in which case
\begin{equation}\label{main formula}\lambda(Y,\alpha \circ p) + 1 = [Y:X] \cdot (\lambda(X,\alpha) + 1).\end{equation}
\end{theorem}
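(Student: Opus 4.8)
The plan is to prove the statement first in the abelian case (it suffices to treat $\mathrm{Gal}(Y/X)$ cyclic of order $\ell$) and then to bootstrap to an arbitrary $\ell$-group by dévissage along a normal series. Write $\mathcal{O} = \{t \in \mathbb{C}_{\ell} : |t|_{\ell}\le 1\}$ for the valuation ring, $\mathfrak{m}$ for its maximal ideal, and $\overline{(\cdot)}$ for reduction modulo $\mathfrak{m}$; the residue field contains $\overline{\mathbb{F}_{\ell}}$, and the crucial elementary fact is that every $\ell$-power root of unity is $\equiv 1 \pmod{\mathfrak m}$. Suppose $G=\op{Gal}(Y/X)$ is abelian, so that $Y\cong X(G,S,\beta)$ for some voltage assignment $\beta:S\to G$. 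By Proposition \ref{combine_volt}, the level-$n$ pullback graph is $Y(\mathbb{Z}/\ell^n\mathbb{Z},S_Y,\alpha\circ p_{/n})\cong X(G\times\mathbb{Z}/\ell^n\mathbb{Z},S,\gamma_n)$ with $\gamma_n(s)=(\beta(s),\alpha_{/n}(s))$; hence $Y_n:=Y\times_X X_n$ is an abelian cover of $X$ with group $G\times\mathbb{Z}/\ell^n\mathbb{Z}$. For each $\chi\in\widehat{G}$ I introduce, in exact analogy with (\ref{char_ser})–(\ref{twisted_concrete}), the matrix $A_{\rho,\chi}$ obtained from $A_\rho$ by inserting the constant $\chi(\beta(s))$ alongside $\rho(\alpha(s))$, and set $g_\chi(T):=\det(D-A_{\rho,\chi})\in\mathbb{Z}[\chi]\llbracket T\rrbracket$. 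Writing $\chi\boxtimes\psi$ for the character $(g,a)\mapsto\chi(g)\psi(a)$ of $G\times\mathbb{Z}/\ell^n\mathbb{Z}$, the same computation that produced (\ref{pow_ser_ev}) from (\ref{h_def}) and (\ref{univ_char}) gives $g_\chi(t_\psi)=h_{Y_n/X}(1,\chi\boxtimes\psi)$ for every finite-order $\psi$, and $g_{\chi_0}=f_{X,\alpha}$ for the trivial character $\chi_0$.

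The key identity I aim to establish is the power-series factorization
$$f_{Y,\alpha\circ p}(T)=\prod_{\chi\in\widehat{G}}g_\chi(T).$$
To obtain it I would compare the product decomposition (\ref{prod_dec}) applied through the two intermediate towers $Y_n/Y$ (group $\mathbb{Z}/\ell^n\mathbb{Z}$) and $Y_n/X$ (group $G\times\mathbb{Z}/\ell^n\mathbb{Z}$). Equivalently, inducing a character $\psi$ of $\op{Gal}(Y_n/Y)$ up to $\op{Gal}(Y_n/X)$ yields $\bigoplus_{\chi\in\widehat{G}}\chi\boxtimes\psi$, so the Artin formalism of \cite{Stark/Terras:2000}, combined with the Euler-characteristic relation $\chi(Y)=[Y:X]\chi(X)$ of (\ref{RH_graph}) to cancel the $(1-u^2)$-factors, gives $h_{Y_n/Y}(u,\psi)=\prod_{\chi}h_{Y_n/X}(u,\chi\boxtimes\psi)$. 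Evaluating at $u=1$ and feeding in the interpolation formulas produces $f_{Y,\alpha\circ p}(t_\psi)=\prod_\chi g_\chi(t_\psi)$ for all $\psi$, and since the points $t_\psi$ accumulate at $0$ this upgrades to the displayed identity of power series.

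Now I exploit that $G$ is an $\ell$-group: each value $\chi(\beta(s))$ is an $\ell$-power root of unity, so $A_{\rho,\chi}\equiv A_\rho\pmod{\mathfrak m}$ and therefore $g_\chi\equiv f_{X,\alpha}\pmod{\mathfrak m}$ for every $\chi$. If $\mu(X,\alpha)=\mu(f_{X,\alpha})=0$, then $\overline{f_{X,\alpha}}\ne 0$, hence every $\overline{g_\chi}=\overline{f_{X,\alpha}}$ has order of vanishing $\lambda(f_{X,\alpha})$ at $T=0$; reducing the factorization and using that $\overline{\mathbb{F}_{\ell}}\llbracket T\rrbracket$ is a domain yields $\overline{f_{Y,\alpha\circ p}}=\overline{f_{X,\alpha}}^{\,|G|}\ne 0$ with order of vanishing $|G|\cdot\lambda(f_{X,\alpha})$. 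Thus $\mu(Y,\alpha\circ p)=0$ and $\lambda(f_{Y,\alpha\circ p})=|G|\,\lambda(f_{X,\alpha})$, which rearranges into (\ref{main formula}) since $\lambda(\cdot,\cdot)=\lambda(f)-1$ and $|G|=[Y:X]$. Conversely, if $\mu(X,\alpha)>0$ then $\overline{f_{X,\alpha}}=0$, forcing $\overline{g_\chi}=0$ and hence $\mu(Y,\alpha\circ p)>0$; this gives the equivalence $\mu(X,\alpha)=0\Leftrightarrow\mu(Y,\alpha\circ p)=0$.

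For an arbitrary $\ell$-group $G$, I would choose a chain $1=H_0\triangleleft H_1\triangleleft\cdots\triangleleft H_k=G$ with $H_{i+1}/H_i\cong\mathbb{Z}/\ell\mathbb{Z}$ and let $W_i$ be the intermediate cover corresponding to $H_i$, so that $X=W_k,\ldots,W_0=Y$ with each $W_i\to W_{i+1}$ a cyclic cover of degree $\ell$. Each $W_{i+1}$ is connected with $\chi(W_{i+1})=[W_{i+1}:X]\chi(X)\ne 0$, the $W_i$-tower is the pullback of the $W_{i+1}$-tower along $W_i\to W_{i+1}$, and connectedness of every level propagates downward from the $Y$-tower by Remark \ref{con_cover}; so the abelian case applies to each step, giving $\mu(W_i\text{-tower})=0\Leftrightarrow\mu(W_{i+1}\text{-tower})=0$ and, when these vanish, $\lambda(W_i\text{-tower})+1=\ell\,(\lambda(W_{i+1}\text{-tower})+1)$. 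Telescoping over $i$ produces $\mu(Y,\alpha\circ p)=0\Leftrightarrow\mu(X,\alpha)=0$ and $\lambda(Y,\alpha\circ p)+1=\ell^k(\lambda(X,\alpha)+1)=[Y:X](\lambda(X,\alpha)+1)$. I expect the main obstacle to be the factorization $f_{Y,\alpha\circ p}=\prod_\chi g_\chi$: namely, making the comparison of product decompositions (equivalently, the induction-of-characters identity) rigorous in the Artin–Ihara setting and correctly bookkeeping the Euler-characteristic factors, after which the $\ell$-group congruence and the dévissage are comparatively routine.
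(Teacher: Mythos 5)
Your proposal is correct and follows essentially the same route as the paper's own proof: your d\'evissage along a subnormal series with $\Z/\ell\Z$ quotients is the same reduction the paper performs via a central subgroup of order $\ell$ (Cauchy's theorem plus nontriviality of the center); your identification of the level-$n$ pullback with $X(G\times \Z/\ell^{n}\Z,S,\gamma_{n})$ is Propositions \ref{concrete_fiber}, \ref{all_disjoint} and \ref{combine_volt}; your series $g_{\chi}$ are the paper's $f_{\psi}$; the induction-of-characters identity you invoke is the paper's (\ref{rep_iso}), leading to (\ref{useful_prod}); and the congruence $g_{\chi}\equiv f_{X,\alpha}$ modulo the maximal ideal (because $\ell$-power roots of unity are $\equiv 1$) is exactly the paper's (\ref{cong})--(\ref{eq_mod}). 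Your reading of $\lambda$ as the order of vanishing of the reduction at $T=0$, and the resulting multiplicativity, is equivalent to the paper's (\ref{mul_inv}).

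One justification, however, is false as stated and must be replaced --- precisely at the step you yourself single out as the main obstacle. You upgrade the pointwise identity $f_{Y,\alpha\circ p}(t_{\psi})=\prod_{\chi}g_{\chi}(t_{\psi})$ to an identity of power series ``since the points $t_{\psi}$ accumulate at $0$.'' They do not: if $\psi$ has exact order $\ell^{n}$, then $t_{\psi}=\zeta_{\ell^{n}}-1$ satisfies $|t_{\psi}|_{\ell}=\ell^{-1/(\ell^{n-1}(\ell-1))}\rightarrow 1$ as $n\rightarrow\infty$, so apart from $t_{\psi_{0}}=0$ these points stay bounded away from $0$ and accumulate only toward the boundary of the open unit disk. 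The correct argument, which is how the paper obtains (\ref{essential_eq}), is that a nonzero power series with coefficients in the ring of integers $\mathfrak{o}$ of a finite extension of $\Q_{\ell}$ has only finitely many zeros in the open unit disk (by Weierstrass preparation); since the difference of the two sides vanishes at the infinitely many distinct points $t_{\psi}$, it vanishes identically. For this fact (and so that the minimum in the definition of $\mu$ is attained) you should work over such an $\mathfrak{o}$ --- e.g.\ the integers of $\Q_{\ell}(\mu_{\ell^{N}})$ with $\ell^{N}$ the exponent of $G$ --- rather than over the full valuation ring of $\mathbb{C}_{\ell}$; this is harmless since your $g_{\chi}$ have coefficients in $\Z[\chi]$. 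With this substitution your argument is complete and coincides with the paper's.
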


\begin{remark}
Note that in the above formula, the degree "$[Y:X]$" plays a role, while in \eqref{kida}, the analogue is the degree "$[E_\infty:F_\infty]$". If one specializes \eqref{kida} to the case when $E$ and $F_\infty$ are linearly disjoint over $F$, then of course, $[E_\infty:F_\infty]=[E:F]$. The Assumption \ref{main assumption} on the pullback tower over $Y$, may be viewed as a sort of "disjointedness" assumption. For instance, if $p:Y\rightarrow X$ is the map $X_n\rightarrow X$, then, the tower over $X_n$ obtained by pullback fails to be connected.
\end{remark}

\begin{proof}
Every $\ell$-group $G$ has a non-trivial center $Z(G)$, and by Cauchy's theorem, there exists a subgroup $H \le Z(G)$ such that $|H| = \ell$.  Since $H \subseteq Z(G)$, $H$ is a normal subgroup of $G$.  By the Galois correspondence, there exists an intermediate cover $(Z,q_{1},q_{2})$ corresponding to the subgroup $H$ fitting into a commutative diagram
\begin{equation*}
\begin{tikzcd}
Y \arrow["p",dd] \arrow[rd,"q_{1}"] & \\
& Z \arrow[ld,"q_{2}"]\\
X & 
\end{tikzcd}
\end{equation*}
and the normality of $H$ implies that both $q_{1}$ and $q_{2}$ are Galois covers.  The cover $Y/Z$ has Galois group $H$, and $Z/X$ is again a Galois cover whose Galois group $G/H$ is an $\ell$-group.  Note that $\alpha \circ q_{2}:S_{Z} \rightarrow \mathbb{Z}_{\ell}$ satisfies Assumption \ref{main assumption} by Remark \ref{conn_propagated}.  Repeating the argument above, we can construct a finite tower of covers of connected graphs
$$X = Z_{0} \leftarrow Z_{1} \leftarrow \ldots \leftarrow Z_{t} = Y, $$
for which $Z_{i}/Z_{i-1}$ is Galois with Galois group isomorphic to $\mathbb{Z}/\ell\mathbb{Z}$ ($i=1,\ldots,t$).  Furthermore, $[Y:X] = \ell^{t} = |{\rm Gal}(Y/X)|$.  Therefore, it suffices to show the theorem in the situation where $Y/X$ is a cyclic cover of order $\ell$, and we assume so from now on.

As explained in \cite[Section 3]{Sage/Vallieres:2022}, there exists a function $\beta:S \rightarrow G$ for which the cover $X(G,S,\beta)/X$ is isomorphic to the cover $Y/X$, so we can replace $Y$ by $X(G,S,\beta)$.  From now on, let $\Gamma_{n} = {\rm Gal}(Y(\mathbb{Z}/\ell^{n}\mathbb{Z},S_{Y},\alpha \circ p_{/n})/Y) \simeq \mathbb{Z}/\ell^{n}\mathbb{Z}$.  Note that by Proposition \ref{all_disjoint}, the cover $Y(\mathbb{Z}/\ell^{n}\mathbb{Z},S_{Y},\alpha \circ p_{/n})/X$ is an abelian cover with Galois group isomorphic to the cartesian product $G \times \Gamma_{n}$, and by Proposition \ref{combine_volt}, we can replace the graphs $Y(\mathbb{Z}/\ell^{n}\mathbb{Z},S_{Y},\alpha \circ p_{/n})$ by the graphs $X(G \times \Gamma_{n},S,\gamma_{/n})$, where $\gamma:S \rightarrow G \times \mathbb{Z}_{\ell}$ is given by $\gamma(s) = (\beta(s),\alpha(s))$, and $\gamma_{/n}$ represents the reduction modulo $\ell^{n}$ of $\gamma$ in the second component.

Let $\mathfrak{o}$ be the ring of integers of the local field $K = \mathbb{Q}_{\ell}(\mu_{\ell})$.  If $\psi \in \widehat{G}$, then we define
$$A_{\psi,\rho} =  \left(\sum_{\substack{s \in S \\ {\rm inc}(s) = (v_{i},v_{j})}}\psi(\beta(s))\rho(\alpha(s)) + \sum_{\substack{s \in S \\ {\rm inc}(s) = (v_{j},v_{i})}}\psi(-\beta(s))\rho(-\alpha(s)) \right) \in M_{g \times g}(\mathfrak{o}\llbracket T \rrbracket),$$
and set
$$f_{\psi}(T) = \op{det}(D - A_{\psi,\rho}) \in \mathfrak{o}\llbracket T \rrbracket, $$
where $D$ is as usual the degree matrix of $X$.  Note that
$$f_{\psi_{0}}(T) = f_{X,\alpha}(T) \in \mathbb{Z}_{\ell}\llbracket T \rrbracket. $$  
The characters of $G \times \Gamma_{n}$ are of the form $\psi \otimes \phi$, where $\psi \in \widehat{G}$ and $\phi \in \widehat{\Gamma}_{n}$.  Now, view $\phi$ as a character on $1 \times \Gamma_{n} \le G \times \Gamma_{n}$.  We claim that  
\begin{equation} \label{rep_iso}
{\rm Ind}_{1 \times \Gamma_{n}}^{G \times \Gamma_{n}}(\phi) \simeq \bigoplus_{\psi \in \widehat{G}} \psi \otimes \phi.
\end{equation}
To prove the claim, let $W$ be the $\mathbb{C}[\Gamma_{n}]$-module corresponding to the character $\phi$.  Then
\begin{equation*}
\begin{aligned}
\mathbb{C}[G \times \Gamma_{n} ] \otimes_{\mathbb{C}[\Gamma_{n}]}W &\simeq \mathbb{C}[G] \otimes_{\mathbb{C}} \mathbb{C}[ \Gamma_{n}] \otimes_{\mathbb{C}[ \Gamma_{n}]} W \\
&\simeq \mathbb{C}[G] \otimes_{\mathbb{C}}W \\
&\simeq \Big(\bigoplus_{\psi \in \widehat{G}} \mathbb{C}e_{\psi}\Big) \otimes_{\mathbb{C}} W \\
&\simeq \bigoplus_{\psi \in \widehat{G}} \mathbb{C}e_{\psi} \otimes_{\mathbb{C}}W,
\end{aligned}
\end{equation*}
and this shows the claim (\ref{rep_iso}) above.  Using the Artin formalism satisfied by the Artin-Ihara $L$-functions \cite[Proposition 3]{Stark/Terras:2000} and (\ref{RH_graph}), we get
\begin{equation} \label{useful_prod}
h_{Y}(u,\phi) = \prod_{\psi \in \widehat{G}}h_{Y/X}(u,\psi \otimes \phi),
\end{equation}
for all characters $\phi \in \widehat{\Gamma}_{n}$ and for all $n \ge 0$.  

Combining (\ref{h_def}), (\ref{twisted_concrete}), and (\ref{univ_char}) give
\begin{equation} \label{special_value_ps}
f_{\psi}(t_{\phi}) = h_{X}(1,\psi \otimes \phi),
\end{equation}
for all characters $\phi \in \widehat{\Gamma}_{n}$, for all $\psi \in \widehat{G}$, and for all $n \ge 0$.  Combining further with (\ref{useful_prod}), we obtain
$$f_{Y,\alpha \circ p}(t_{\phi}) = \prod_{\psi \in \widehat{G}} f_{\psi}(t_{\phi}) $$
for all characters $\phi \in \widehat{\Gamma}_{n}$ and for all $n \ge 0$.  Since a power series in $\mathfrak{o}\llbracket T \rrbracket $ has finitely many zeros in the unit disk $D = \{t \in \mathbb{C}_{\ell} \, : \, |t|_{\ell}< 1\}$, the equalities (\ref{special_value_ps}) above imply
\begin{equation} \label{essential_eq}
f_{Y,\alpha \circ p}(T) = \prod_{\psi \in \widehat{G}}f_{\psi}(T).
\end{equation}

Let ${\rm ord}_{\ell}$ be the valuation on $\mathbb{C}_{\ell}$ normalized so that ${\rm ord}_{\ell}(\ell)=1$.  One defines the Iwasawa invariant of a non-zero power series
$$Q(T) = a_{0} + a_{1}T + a_{2}T^{2} + \ldots \in \mathfrak{o}\llbracket T \rrbracket $$
similarly as before, namely
$$\mu(Q(T)) = {\rm min}\{{\rm ord}_{\ell}(a_{i}) \, | \, i \ge 0\},$$
and
$$\lambda(Q(T)) = {\rm min}\{i \ge 0 \, | \, {\rm ord}_{\ell}(a_{i}) = \mu(Q(T))\}.$$
Note that $\mu(Q(T))$ is a nonnegative rational number and $\lambda(Q(T))$ is an integer.  If $Q_{1}(T), Q_{2}(T) \in \mathfrak{o}\llbracket T \rrbracket$, then
\begin{equation}\label{mul_inv}
\mu(Q_{1} \cdot Q_{2}) = \mu(Q_{1}) + \mu(Q_{2}) \text{ and } \lambda(Q_{1} \cdot Q_{2}) = \lambda(Q_{1}) + \lambda(Q_{2}).
\end{equation}

If $\psi$ is a non-trivial character of $G$, and $\mathcal{L}$ is the unique maximal ideal of $\mathfrak{o}$, then 
\begin{equation} \label{cong}
A_{\psi,\rho} \equiv A_{\rho} \pmod{M_{g \times g}(\mathcal{L}\llbracket T \rrbracket)}. 
\end{equation}
Let us denote the reduction map $\mathfrak{o}\llbracket T \rrbracket \longrightarrow \mathbb{F}_{\ell}\llbracket T \rrbracket$ by $f(T) \mapsto \bar{f}(T)$.  Equation (\ref{cong}) implies
$$\bar{A}_{\psi,\rho} = \bar{A}_{\rho} \text{ in } M_{g \times g}(\mathbb{F}_{\ell}\llbracket T \rrbracket), $$
and it follows that
\begin{equation} \label{eq_mod}
\bar{f}_{\psi}(T) = \bar{f}_{X,\alpha}(T) \text{ in } \mathbb{F}_{\ell}\llbracket T \rrbracket, 
\end{equation}
for all non-trivial characters $\psi$ of $G$.  We deduce that if $\mu(f_{X,\alpha}(T)) = 0$, then $\mu(f_{\psi}(T)) = 0$ for all non-trivial characters $\psi$ of $G$.  Combined with (\ref{essential_eq}) and (\ref{mul_inv}) above, one gets that if $\mu(f_{X,\alpha}(T)) = 0$, then $\mu(f_{Y,\alpha \circ p}(T)) = 0$.  Since the $\mu$ invariant is nonnegative, the converse is clear.  Under the assumption that $\mu(X,\alpha) = 0$, it follows from (\ref{eq_mod}) that
$$\lambda(f_{\psi}(T)) = \lambda(f_{X,\alpha}(T)), $$
for all non-trivial characters $\psi$ of $G$.  Therefore, (\ref{essential_eq}) and (\ref{mul_inv}) again imply the desired result.

\end{proof}

\subsection{Examples} \label{examples}

\begin{example}
Consider the bouquet with three loops $X = B_{3}$, and pick an orientation $S = \{s_{1},s_{2},s_{3}\}$.  Let $\ell=3$, and consider $\alpha:S \rightarrow \mathbb{Z} \subseteq \mathbb{Z}_{3}$ given by 
$$s_{1} \mapsto 1, s_{2} \mapsto 4, \text{ and } s_{3} \mapsto 20.$$
This is Example $3$ on page $451$ of \cite{Vallieres:2021}.  It was shown in \emph{loc. cit.} that 
$${\rm ord}_{3}(\kappa_{n}) = 5n - 2, $$
for all $n \ge 1$.  Thus,
$$\mu(B_{3},\alpha) = 0 \text{ and } \lambda(B_{3},\alpha) = 5. $$
The $\mathbb{Z}_{3}$-tower above $X = B_{3}$ is the following:
\begin{equation*}
\begin{tikzpicture}[baseline={([yshift=-1.7ex] current bounding box.center)}]
\node[draw=none,minimum size=3cm,regular polygon,regular polygon sides=1] (a) {};
\foreach \x in {1}
  \fill (a.corner \x) circle[radius=0.7pt];
\draw (a.corner 1) to [in=50,out=130,loop] (a.corner 1);
\draw (a.corner 1) to [in=50,out=130,distance = 0.8cm,loop] (a.corner 1);
\draw (a.corner 1) to [in=50,out=130,distance = 0.5cm,loop] (a.corner 1);
\end{tikzpicture}
\longleftarrow \, \, \,
\begin{tikzpicture}[baseline={([yshift=-0.6ex] current bounding box.center)}]
\node[draw=none,minimum size=2cm,regular polygon,regular polygon sides=3] (a) {};

\foreach \x in {1,2,3}
  \fill (a.corner \x) circle[radius=0.7pt];

\path (a.corner 1) edge [bend left=20] (a.corner 2);
\path (a.corner 1) edge [bend right=20] (a.corner 2);
\path (a.corner 2) edge [bend left=20] (a.corner 3);
\path (a.corner 2) edge [bend right=20] (a.corner 3);
\path (a.corner 3) edge [bend left=20] (a.corner 1);
\path (a.corner 3) edge [bend right=20] (a.corner 1);

\path (a.corner 1) edge  (a.corner 2);
\path (a.corner 2) edge  (a.corner 3);
\path (a.corner 3) edge  (a.corner 1);

\end{tikzpicture}
\longleftarrow \, \,
\begin{tikzpicture}[baseline={([yshift=-0.6ex] current bounding box.center)}]
\node[draw=none,minimum size=2cm,regular polygon,regular polygon sides=9] (a) {};

\foreach \x in {1,2,...,9}
  \fill (a.corner \x) circle[radius=0.7pt];
  
\foreach \y\z in {1/2,2/3,3/4,4/5,5/6,6/7,7/8,8/9,9/1}
  \path (a.corner \y) edge (a.corner \z);
  
\foreach \y\z in {1/3,2/4,3/5,4/6,5/7,6/8,7/9,8/1,9/2}
  \path (a.corner \y) edge (a.corner \z); 
  
\foreach \y\z in {1/5,2/6,3/7,4/8,5/9,6/1,7/2,8/3,9/4}
  \path (a.corner \y) edge (a.corner \z);

\end{tikzpicture}
\longleftarrow
\begin{tikzpicture}[baseline={([yshift=-0.6ex] current bounding box.center)}]
\node[draw=none,minimum size=2cm,regular polygon,regular polygon sides=27] (a) {};

\foreach \x in {1,2,...,27}
  \fill (a.corner \x) circle[radius=0.7pt];
  
\foreach \y\z in {1/2,2/3,3/4,4/5,5/6,6/7,7/8,8/9,9/10,10/11,11/12,12/13,13/14,14/15,15/16,16/17,17/18,18/19,19/20,20/21,21/22,22/23,23/24,24/25,25/26,26/27,27/1}
  \path (a.corner \y) edge (a.corner \z);
  
\foreach \y\z in {1/5,2/6,3/7,4/8,5/9,6/10,7/11,8/12,9/13,10/14,11/15,12/16,13/17,14/18,15/19,16/20,17/21,18/22,19/23,20/24,21/25,22/26,23/27,24/1,25/2,26/3,27/4}
  \path (a.corner \y) edge (a.corner \z); 
  
\foreach \y\z in {1/21,2/22,3/23,4/24,5/25,6/26,7/27,8/1,9/2,10/3,11/4,12/5,13/6,14/7,15/8,16/9,17/10,18/11,19/12,20/13,21/14,22/15,23/16,24/17,25/18,26/19,27/20}
  \path (a.corner \y) edge (a.corner \z);

\end{tikzpicture}
\, \, \longleftarrow \ldots
\end{equation*}
Now let $\beta:S \rightarrow \mathbb{Z}/3\mathbb{Z}\times \mathbb{Z}/ 3\mathbb{Z}$ be defined via
$$s_{1}, s_{3} \mapsto (\bar{1},\bar{0}) \text{ and } s_{2} \mapsto (\bar{0},\bar{1}).$$
Since $\beta(S)$ generates $\mathbb{Z}/3\mathbb{Z} \times \mathbb{Z}/3\mathbb{Z}$, the graph $ Y = X(\mathbb{Z}/3\mathbb{Z}\times \mathbb{Z}/3\mathbb{Z},S,\beta)$ is connected and thus the cover $p:Y \rightarrow X$ is Galois with Galois group isomorphic to $\mathbb{Z}/3\mathbb{Z} \times \mathbb{Z}/3\mathbb{Z}$: 

\begin{equation*}
\begin{tikzpicture}[baseline={([yshift=-1.7ex] current bounding box.center)}]
\node[draw=none,minimum size=3cm,regular polygon,regular polygon sides=1] (a) {};
\foreach \x in {1}
  \fill (a.corner \x) circle[radius=0.7pt];
\draw (a.corner 1) to [in=50,out=130,loop] (a.corner 1);
\draw (a.corner 1) to [in=50,out=130,distance = 0.8cm,loop] (a.corner 1);
\draw (a.corner 1) to [in=50,out=130,distance = 0.5cm,loop] (a.corner 1);
\end{tikzpicture}
\stackrel{p}{\longleftarrow} \, \, \,
\begin{tikzpicture}[baseline={([yshift=-0.6ex] current bounding box.center)}]
\node[draw=none,minimum size=2cm,regular polygon,regular polygon sides=9] (a) {};

\foreach \x in {1,2,...,9}
  \fill (a.corner \x) circle[radius=0.7pt];
  
\path (a.corner 1) edge [bend left=12] (a.corner 2);
\path (a.corner 1) edge [bend right=12] (a.corner 2);

\path (a.corner 2) edge [bend left=12] (a.corner 3);
\path (a.corner 2) edge [bend right=12] (a.corner 3);

\path (a.corner 1) edge [bend left=8] (a.corner 3);
\path (a.corner 1) edge [bend right=8] (a.corner 3);

\path (a.corner 4) edge [bend left=12] (a.corner 5);
\path (a.corner 4) edge [bend right=12] (a.corner 5);

\path (a.corner 5) edge [bend left=12] (a.corner 6);
\path (a.corner 5) edge [bend right=12] (a.corner 6);

\path (a.corner 4) edge [bend left=8] (a.corner 6);
\path (a.corner 4) edge [bend right=8] (a.corner 6);

\path (a.corner 7) edge [bend left=12] (a.corner 8);
\path (a.corner 7) edge [bend right=12] (a.corner 8);

\path (a.corner 8) edge [bend left=12] (a.corner 9);
\path (a.corner 8) edge [bend right=12] (a.corner 9);

\path (a.corner 7) edge [bend left=8] (a.corner 9);
\path (a.corner 7) edge [bend right=8] (a.corner 9);

\path (a.corner 1) edge  (a.corner 4);
\path (a.corner 2) edge  (a.corner 5);
\path (a.corner 3) edge  (a.corner 6);
\path (a.corner 4) edge  (a.corner 7);
\path (a.corner 5) edge  (a.corner 8);
\path (a.corner 6) edge  (a.corner 9);
\path (a.corner 7) edge  (a.corner 1);
\path (a.corner 8) edge  (a.corner 2);
\path (a.corner 9) edge  (a.corner 3);

\end{tikzpicture}
\end{equation*}
Consider now $S_{Y} = p^{-1}(S)$ and $\alpha \circ p: S_{Y} \rightarrow \mathbb{Z} \subseteq \mathbb{Z}_{3}$.  To show that $\alpha \circ p: S_{Y} \rightarrow \mathbb{Z}_{3}$ satisfies Assumption \ref{main assumption}, we can use Proposition \ref{combine_volt} and show instead that the graphs $X(\mathbb{Z}/3\mathbb{Z} \times \mathbb{Z}/3\mathbb{Z} \times \mathbb{Z}/3^{n}\mathbb{Z},S,\gamma_{/n})$ are connected for all $n \ge 1$.  But a simple calculation shows that $\gamma_{/n}(S)$ generates $\mathbb{Z}/3\mathbb{Z} \times \mathbb{Z}/3\mathbb{Z} \times \mathbb{Z}/3^{n}\mathbb{Z}$ for all $n \ge 1$, and this shows that Assumption \ref{main assumption} is satisfied for $\alpha \circ p: S_{Y} \rightarrow \mathbb{Z}_{3}$.  We obtain the $\mathbb{Z}_{3}$-tower above $Y$:
\begin{equation*}
\begin{tikzpicture}[baseline={([yshift=-0.6ex] current bounding box.center)}]
\node[draw=none,minimum size=2cm,regular polygon,regular polygon sides=9] (a) {};

\foreach \x in {1,2,...,9}
  \fill (a.corner \x) circle[radius=0.7pt];
  
\path (a.corner 1) edge [bend left=12] (a.corner 2);
\path (a.corner 1) edge [bend right=12] (a.corner 2);

\path (a.corner 2) edge [bend left=12] (a.corner 3);
\path (a.corner 2) edge [bend right=12] (a.corner 3);

\path (a.corner 1) edge [bend left=8] (a.corner 3);
\path (a.corner 1) edge [bend right=8] (a.corner 3);

\path (a.corner 4) edge [bend left=12] (a.corner 5);
\path (a.corner 4) edge [bend right=12] (a.corner 5);

\path (a.corner 5) edge [bend left=12] (a.corner 6);
\path (a.corner 5) edge [bend right=12] (a.corner 6);

\path (a.corner 4) edge [bend left=8] (a.corner 6);
\path (a.corner 4) edge [bend right=8] (a.corner 6);

\path (a.corner 7) edge [bend left=12] (a.corner 8);
\path (a.corner 7) edge [bend right=12] (a.corner 8);

\path (a.corner 8) edge [bend left=12] (a.corner 9);
\path (a.corner 8) edge [bend right=12] (a.corner 9);

\path (a.corner 7) edge [bend left=8] (a.corner 9);
\path (a.corner 7) edge [bend right=8] (a.corner 9);

\path (a.corner 1) edge  (a.corner 4);
\path (a.corner 2) edge  (a.corner 5);
\path (a.corner 3) edge  (a.corner 6);
\path (a.corner 4) edge  (a.corner 7);
\path (a.corner 5) edge  (a.corner 8);
\path (a.corner 6) edge  (a.corner 9);
\path (a.corner 7) edge  (a.corner 1);
\path (a.corner 8) edge  (a.corner 2);
\path (a.corner 9) edge  (a.corner 3);

\end{tikzpicture}
\, \, \, \, \,  \longleftarrow 
\parbox{.20\linewidth}{
    \centering
        \includegraphics[scale=0.25]{./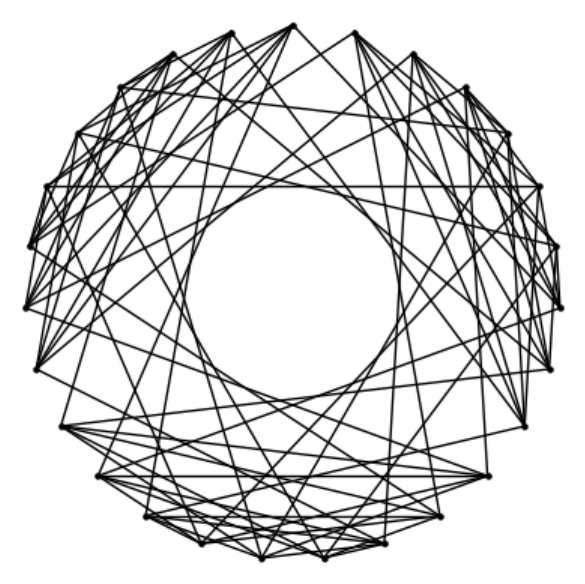}
}
\longleftarrow
\parbox{.20\linewidth}{
    \centering
        \includegraphics[scale=0.25]{./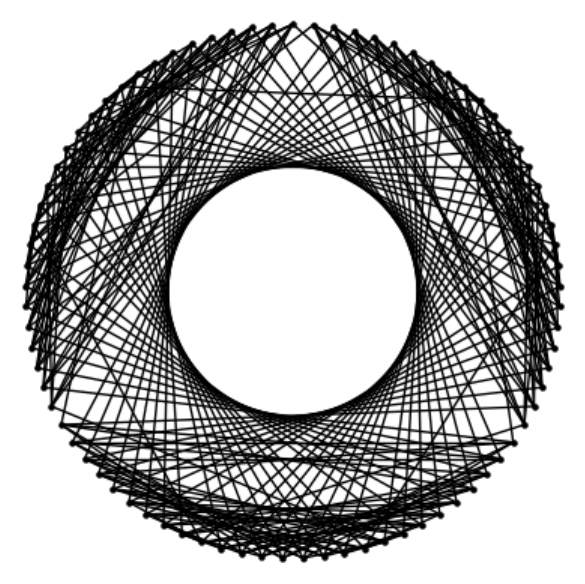}
}
\longleftarrow \, \, \ldots
\end{equation*}
Using \eqref{char_ser}, we calculate the characteristic series 
$$f_{Y,\alpha \circ p}(T) = - 886443588T^{2} + 886443588T^{3} - 7697155248T^{4} + \ldots  \in \mathbb{Z}\llbracket T \rrbracket \subseteq \mathbb{Z}_{3}\llbracket T \rrbracket,$$ 
for which the first coefficient that is not divisible by $3$ is the coefficient of $T^{54}$.  Thus, we obtain
$$\mu(Y,\alpha \circ p)=0 \text{ and } \lambda(Y,\alpha \circ p) = 53. $$
Using SageMath \cite{SAGE}, we calculate
$$\kappa_{0} =2^{2} \cdot 3^{10}, \kappa_{1} = 2^{12} \cdot 3^{31}, \kappa_{2} = 2^{24} \cdot 3^{74}\cdot 17^{6} \cdot 19^6,$$
and 
$$\kappa_{3} = 2^{24} \cdot 3^{127} \cdot 17^{6} \cdot 19^{6} \cdot 102761^{6} \cdot 134243^{6} \cdot 176417^{6}. $$
We have
$${\rm ord}_{3}(\kappa_{n}) = 53n - 32,$$
for $n \ge 2$.  Here, we have $[Y:X] = 9$ and the relation
$$\lambda(Y,\alpha \circ p) + 1 = [Y:X] \cdot (\lambda(B_{3},\alpha) + 1), $$
as expected by Theorem \ref{main}.
\end{example}

\begin{example}
Consider again the bouquet with three loops $X=B_{3}$, and pick an orientation $S = \{ s_{1},s_{2},s_{3}\}$.  Let now $\ell=2$, and consider $\alpha:S \rightarrow \mathbb{Z} \subseteq \mathbb{Z}_{2}$ given by
$$s_{1},s_{2},s_{3} \mapsto 1. $$
One obtains a $\mathbb{Z}_{2}$-tower
\begin{equation*}
\begin{tikzpicture}[baseline={([yshift=-1.7ex] current bounding box.center)}]
\node[draw=none,minimum size=3cm,regular polygon,regular polygon sides=1] (a) {};
\foreach \x in {1}
  \fill (a.corner \x) circle[radius=0.7pt];
\draw (a.corner 1) to [in=50,out=130,loop] (a.corner 1);
\draw (a.corner 1) to [in=50,out=130,distance = 0.8cm,loop] (a.corner 1);
\draw (a.corner 1) to [in=50,out=130,distance = 0.5cm,loop] (a.corner 1);
\end{tikzpicture}
\, \, \, \,   \longleftarrow 
\parbox{.20\linewidth}{
    \centering
        \includegraphics[scale=0.25]{./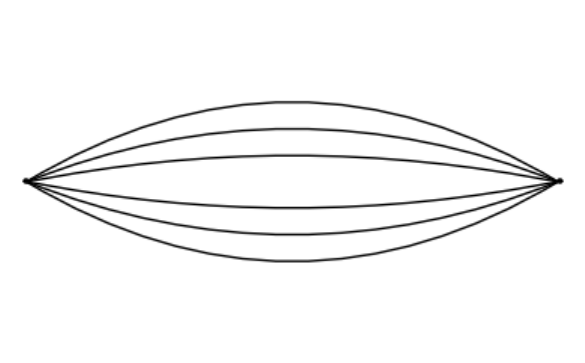}
}
\longleftarrow
\parbox{.20\linewidth}{
    \centering
        \includegraphics[scale=0.25]{./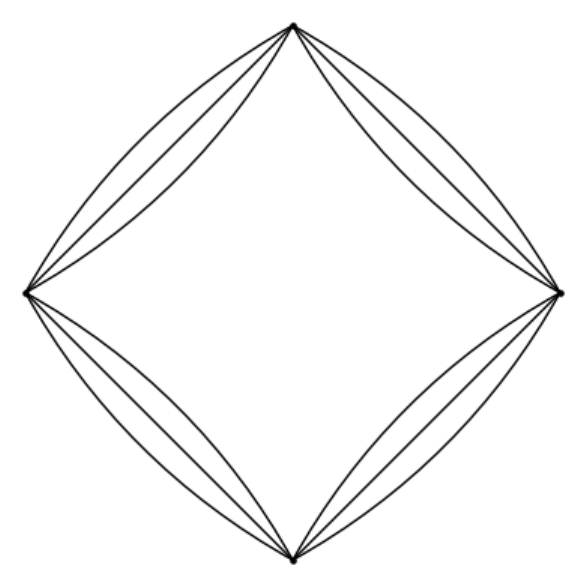}
}
\longleftarrow
\parbox{.20\linewidth}{
    \centering
        \includegraphics[scale=0.25]{./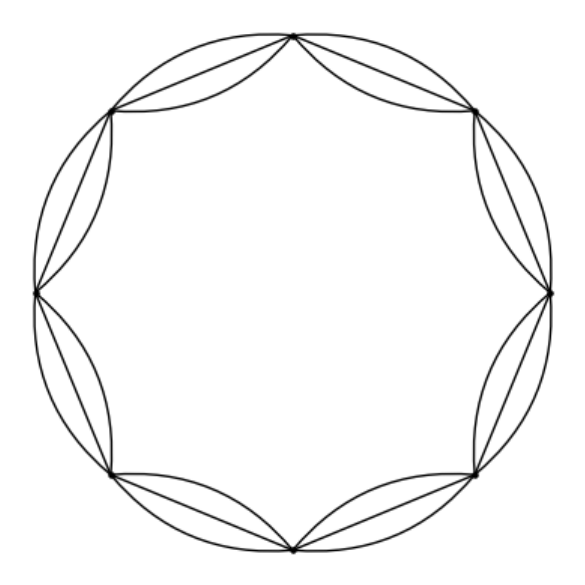}
}
\longleftarrow \ldots
\end{equation*}
The characteristic series is
$$f_{X,\alpha}(T) = -3T^{2} + 3T^{3} -\ldots \in \mathbb{Z}\llbracket T \rrbracket \subseteq \mathbb{Z}_{2}\llbracket T \rrbracket. $$
Thus
$$\mu(B_{3},\alpha) = 0 \text{ and } \lambda(B_{3},\alpha) = 1. $$
Using SageMath, we calculate
$$\kappa_{1} =2 \cdot 3, \kappa_{2} = 2^{2} \cdot 3^{3}, \kappa_{3} = 2^{3} \cdot 3^{7}, \kappa_{4} = 2^{4} \cdot 3^{15}.$$
We have
$${\rm ord}_{2}(\kappa_{n}) = n,$$
for $n \ge 0$.  Consider the dihedral group $D_{8} = \langle \rho, \tau \rangle$ (of cardinality $8$), viewed as a subgroup of the permutation group on four elements, where $\rho = (1 \; 2 \;3 \; 4)$ is of order $4$ and $\tau = (1 \; 4)(2 \; 3)$ is of order $2$.  Consider $\beta:S \rightarrow D_{8}$ given by
$$s_{1} \mapsto \rho, s_{2} \mapsto \tau, \text{ and } s_{3} \mapsto 1, $$
where $1$ is the neutral element of $D_{8}$.  Since $\beta(S)$ generates $D_{8}$, the graph $Y = X(D_{8},S,\beta)$ is connected and the cover $p:Y \rightarrow X$ is Galois with Galois group isomorphic to $D_{8}$:
\begin{equation*}
\begin{tikzpicture}[baseline={([yshift=-1.7ex] current bounding box.center)}]
\node[draw=none,minimum size=3cm,regular polygon,regular polygon sides=1] (a) {};
\foreach \x in {1}
  \fill (a.corner \x) circle[radius=0.7pt];
\draw (a.corner 1) to [in=50,out=130,loop] (a.corner 1);
\draw (a.corner 1) to [in=50,out=130,distance = 0.8cm,loop] (a.corner 1);
\draw (a.corner 1) to [in=50,out=130,distance = 0.5cm,loop] (a.corner 1);
\end{tikzpicture}
\, \, \, \,   \stackrel{p}{\longleftarrow}
\parbox{.20\linewidth}{
    \centering
        \includegraphics[scale=0.25]{./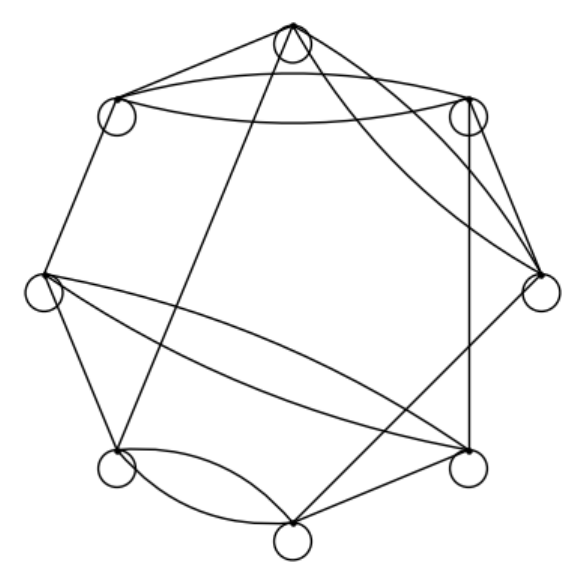}
}
\end{equation*}
An argument similar to the one made in the previous example shows that $\alpha \circ p:S \rightarrow \mathbb{Z}_{2}$ satisfies Assumption \ref{main assumption}.  We obtain the $\mathbb{Z}_{2}$-tower above $Y$:
\begin{equation*}
\parbox{.20\linewidth}{
    \centering
        \includegraphics[scale=0.25]{./ex2t_graph0.pdf}
}
\longleftarrow 
\parbox{.20\linewidth}{
    \centering
        \includegraphics[scale=0.25]{./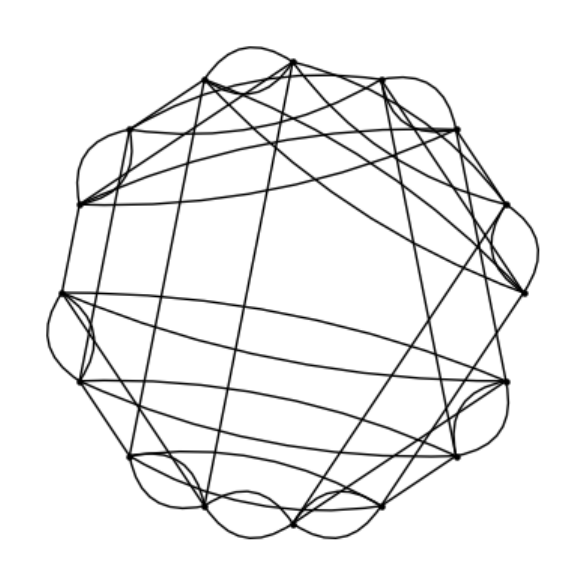}
}
\longleftarrow
\parbox{.20\linewidth}{
    \centering
        \includegraphics[scale=0.25]{./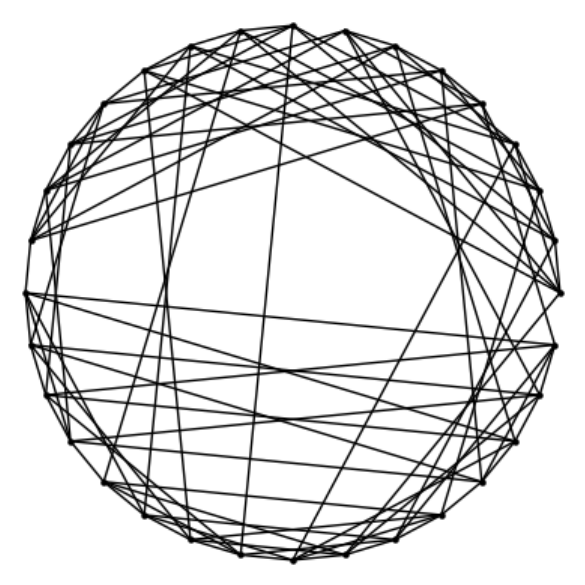}
}
\longleftarrow \, \, \ldots
\end{equation*}
We calculate the characteristic series
$$f_{Y,\alpha \circ p}(T) = - 55296T^{2} + 55296T^{3} + 39168T^{4} + \ldots  \in \mathbb{Z}\llbracket T \rrbracket \subseteq \mathbb{Z}_{2}\llbracket T \rrbracket,$$ 
for which the first coefficient that is not divisible by $2$ is the coefficient of $T^{16}$.  Thus, we obtain
$$\mu(Y,\alpha \circ p)=0 \text{ and } \lambda(Y,\alpha \circ p) = 15. $$
Using SageMath, we calculate
$$\kappa_{0} = 2^{8} \cdot 3^{2}, \kappa_{1} = 2^{21} \cdot 3^{5} \cdot 5^{2}, \kappa_{2} = 2^{48} \cdot 3^{13}\cdot 5^{2}, \kappa_{3} = 2^{63} \cdot 3^{17} \cdot 5^{2} \cdot 7^{4} \cdot 17^{6} \cdot 31^{4},$$
and 
$$\kappa_{4} = 2^{78} \cdot 3^{25} \cdot 5^{2} \cdot 7^{4} \cdot 17^{10} \cdot 31^{4} \cdot 97^{4} \cdot 113^{4} \cdot 577^{6}. $$
We have
$${\rm ord}_{2}(\kappa_{n}) = 15n + 18,$$
for $n \ge 2$.  Here, we have $[Y:X] = 8$ and the relation
$$\lambda(Y,\alpha \circ p) + 1 = [Y:X] \cdot (\lambda(B_{3},\alpha) + 1), $$
as expected by Theorem \ref{main}.

\end{example}

\section{Growth of Iwasawa invariants in noncommutative pro-$\ell$ towers}\label{section 5}
\subsection{Growth in uniform pro-$\ell$ towers} 
\par The Iwasawa theory of graphs was motivated by the study of growth properties of the number of spanning trees in $\Z_\ell$-towers of graphs. These $\Z_\ell$-towers are described by voltage assignments, as described in \S \ref{vo_assign}. Of natural interest is to study similar questions for more general pro-$\ell$ towers of graphs, subject to suitable conditions. As a first step, it seems of natural interest to consider $\Z_\ell^d$-towers of connected graphs as is done for instance in \cite{Sage/Vallieres:2022}. We study a related theme, and our results are motivated by the results of Cuoco \cite{cuoco1980growth}, who studied the growth of Iwasawa invariants in $\Z_\ell^2$-extensions of number fields. We do however work in a more general setting, which we now explain.  

\par Let $G$ be an infinite pro-$\ell$ group. The lower central $\ell$-series is defined recursively as follows $G_0:=G$, and $G_{n+1}:=\left(G_{n}\right)^\ell[G_{n}, G]$. Set $G^{(n)}$ to denote $G/G_n$. A set of elements $A_1, \dots, A_r\in G$ is a set of topological generators if the closure of the subgroup generated by them is $G$. The group $G$ is said to be finitely generated if there is a finite set of topological generators. We recall the notion of a uniform pro-$\ell$ group (cf. \cite[Definition 4.1]{dixon2003analytic}). 
\begin{definition}\label{uniform def}
The group $G$ is \emph{uniform} if the following conditions are all satisfied 
\begin{enumerate}
\item\label{uniform def 1} $G$ is finitely generated,
\item\label{uniform def 2} $[G,G]\subseteq G^\ell$ if $\ell$ is odd and $[G,G]\subseteq G^4$ if $\ell=2$, 
\item\label{uniform def 3} $[G_n:G_{n+1}]=[G:G_1]$ for all $n\geq 1$. 
\end{enumerate} 
\end{definition}

Let $d\in \Z_{\geq 1}$ be so that $[G:G_1]=\ell^d$, then, we find that $[G:G_n]=\ell^{nd}$. We refer to $d$ as the \emph{dimension} of $G$. A $G$-tower over a graph $X$ is a sequence of connected graphs related by covering maps 
\begin{equation} 
X = Y_0 \leftarrow Y_1\leftarrow \ldots \leftarrow Y_n \leftarrow \ldots,
\end{equation}
such that the map $Y_n\rightarrow X$ obtained by composing the maps $Y_n\rightarrow Y_{n-1}\rightarrow \dots \rightarrow X$ is a Galois cover with Galois group $G^{(n)}$. In \S \ref{section 5.2}, we shall construct an example for which $G$ is the pro-$\ell$ group $\op{ker}\left(\op{SL}_2(\Z_\ell)\rightarrow \op{SL}_2(\Z/\ell \Z)\right)$. Such covers can be constructed from voltage assignments. In greater detail, let $g_1, \dots, g_k$ be a set of topological generators for $G$, and choose an enumeration $s_1, \dots, s_t$ of $S$. Assume that $t\geq k$, and let $\alpha:S\rightarrow G$ be the voltage assignment that sends $s_i\mapsto g_i$ for $i\leq k$, and to any arbitrary elements of $G$ for $i>k$. Let $\alpha_{/n}:S\rightarrow G^{(n)}$ be the map obtained upon composing $\alpha$ with the quotient map $G\rightarrow G^{(n)}$. Let $Y_n$ be the graph given by $X(G^{(n)}, S, \alpha_{/n})$, and note that there are maps $Y_n\rightarrow Y_{n-1}$ for all $n\geq 1$. The map $Y_n\rightarrow X$ obtained upon composition is Galois provided $Y_n$ is connected. In our next discussion, let us assume that this is indeed the case. 
Suppose that we are given a $\Z_\ell$-tower
\begin{equation}\label{X tower last}X=X_0\leftarrow X_1\leftarrow \dots \leftarrow X_n\leftarrow \dots\end{equation} over $X$. We set $\Gamma^{(n)}:=\op{Gal}(X_n/X)$ and $\Gamma:=\varprojlim_n \Gamma^{(n)}$. Note that there are isomorphisms $\Gamma^{(n)}\simeq \Z/\ell^n\Z$ and $\Gamma\simeq  \Z_\ell$. Set $\G:=G\times \Gamma$ and $\G^{(n, m)}:=G^{(n)}\times \Gamma^{(m)}$. Furthermore, there is a voltage assignment $\beta:S\rightarrow \Gamma$ such that $X_n=X(\Gamma^{(n)}, G, \beta_{/n})$. Let $\gamma:S\rightarrow \G$ be the voltage assignment given by $\gamma(s)=(\alpha(s), \beta(s))$. Set $\gamma_{n,m}$ to denote the composite of $\gamma$ with the quotient map to $\G^{(n, m)}$, and set also $Y_{n,m}:=X(\G^{(n, m)}, S, \gamma_{n,m})$. Denote by $\cY_n$ the tower over $Y_n$ given by
\[Y_n=Y_{n,0}\leftarrow Y_{n,1}\leftarrow Y_{n, 2}\leftarrow Y_{n,3}\leftarrow \dots \leftarrow Y_{n,m}\leftarrow \dots,\]
and let $\mu_n$ (resp. $\lambda_n$) denote the $\mu$-invariant (resp. $\lambda$-invariant) for the tower $\cY_n$. Set $\mu:=\mu_0$ and $\lambda:=\lambda_0$.

\begin{theorem}\label{lastthm}
With respect to the notation above, assume that the following conditions hold
\begin{enumerate}
    \item the group $G$ is uniform of dimension $d$,
    \item the graph $Y_{n,m}$ is connected for all $m$ and $n$,
    \item the $\mu$-invariant $\mu=\mu_0$ is equal to $0$.
\end{enumerate}
Then, for all values of $n$, $\mu_n=0$, and furthermore, as a function of $n$, 
\begin{equation}\label{lastthm eqn}\lambda_n=\ell^{dn} (\lambda+1)-1.\end{equation}
\end{theorem}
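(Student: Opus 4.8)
The plan is to deduce Theorem~\ref{lastthm} from the graph-theoretic Kida formula, Theorem~\ref{main}, applied layer-by-layer: for each fixed $n$ I would regard $p_n\colon Y_n\to X$ as an $\ell$-power Galois cover and recognise the tower $\cY_n$ over $Y_n$ as the pullback of the $\Z_\ell$-tower (\ref{X tower last}) along $p_n$. Theorem~\ref{main} then relates the invariants of $\cY_n$ to those of (\ref{X tower last}), and letting $n$ vary produces (\ref{lastthm eqn}).

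First I would record the structure of $p_n$. Since $Y_n=X(G^{(n)},S,\alpha_{/n})$ is connected by hypothesis~(2), the cover $p_n$ is Galois with $\op{Gal}(Y_n/X)\cong G^{(n)}=G/G_n$, a finite $\ell$-group. Uniformity enters precisely here: Definition~\ref{uniform def}(\ref{uniform def 3}) gives $[G_k:G_{k+1}]=[G:G_1]=\ell^d$ for all $k\geq 1$, so that
\begin{equation*}
[Y_n:X]=|G^{(n)}|=[G:G_n]=\ell^{dn}.
\end{equation*}
(Throughout, $\chi(X)\neq 0$ is assumed, whence $\chi(Y_n)=\ell^{dn}\chi(X)\neq 0$ by (\ref{RH_graph}) and all the Iwasawa invariants below are defined.)

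Next I would identify $\cY_n$ with the pullback tower over $Y_n$. By Proposition~\ref{concrete_fiber} the pullback $Y_n\times_X X_m$ is the cover of $Y_n$ attached to the voltage assignment $\beta\circ p_n\colon S_{Y_n}\to \Z_\ell$ reduced mod $\ell^m$, and by Proposition~\ref{combine_volt} this cover is isomorphic to $Y_{n,m}=X(G^{(n)}\times\Gamma^{(m)},S,\gamma_{n,m})$, compatibly as $m$ varies. The isomorphism of Proposition~\ref{combine_volt} rests only on the combinatorial identification $(V_X\times G^{(n)})\times\Gamma^{(m)}\leftrightarrow V_X\times(G^{(n)}\times\Gamma^{(m)})$ of vertex sets (and similarly for edges), which does not use commutativity of $G^{(n)}$; hence the possible nonabelianness of $G^{(n)}$ causes no difficulty. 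Consequently $\cY_n$ is the pullback of (\ref{X tower last}) along $p_n$, its voltage assignment is $\beta\circ p_n$, and by definition $\mu_n=\mu(Y_n,\beta\circ p_n)$, $\lambda_n=\lambda(Y_n,\beta\circ p_n)$; the case $n=0$ (where $G^{(0)}$ is trivial and $Y_{0,m}=X_m$) records $\mu=\mu(X,\beta)$ and $\lambda=\lambda(X,\beta)$. Hypothesis~(2) says exactly that $\beta\circ p_n$ satisfies Assumption~\ref{main assumption}.

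Finally I would apply Theorem~\ref{main} to $p_n$. Since $\mu=\mu(X,\beta)=0$ by hypothesis~(3), the theorem gives $\mu_n=0$ for all $n$, together with
\begin{equation*}
\lambda_n+1=[Y_n:X]\,(\lambda+1)=\ell^{dn}(\lambda+1),
\end{equation*}
which is (\ref{lastthm eqn}). All the substantive content is carried by Theorem~\ref{main}; the work here is bookkeeping, and I expect the only genuine (if minor) obstacle to be checking that $\cY_n$ really is the pullback tower---that is, the voltage-assignment identification $Y_{n,m}\cong Y_n\times_X X_m$---and that this holds compatibly in $m$, so that the two sequences agree as towers rather than merely level by level.
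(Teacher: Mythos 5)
Your proposal is correct and follows essentially the same route as the paper: the paper's proof likewise identifies $\cY_n$ as the pullback of the tower (\ref{X tower last}) along $Y_n\to X$ and then invokes Theorem \ref{main} together with $[Y_n:X]=|G^{(n)}|=\ell^{dn}$. The only difference is that you spell out the bookkeeping the paper leaves implicit (the voltage-assignment identification $Y_{n,m}\cong Y_n\times_X X_m$ via Propositions \ref{concrete_fiber} and \ref{combine_volt}, including the observation that the latter's combinatorial isomorphism needs no commutativity of $G^{(n)}$, and the degree count from uniformity), which is a faithful elaboration rather than a different argument.
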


\begin{proof}
The $\Z_\ell$-tower $\cY_n$ is the pullback of \eqref{X tower last} under the map $Y_n\rightarrow X$. It thus follows from Theorem \ref{thmA} that $\lambda_n+1=[Y_n:X](\lambda+1)=|G^{(n)}|(\lambda+1)=\ell^{nd}(\lambda+1)$, and the result follows.
\end{proof}

\subsection{An Example}\label{section 5.2}
We illustrate the above result through an example for which the assumptions are shown to be satisfied. Assume that $\ell$ is an odd prime. Let $G$ denote the kernel of the mod-$\ell$ reduction map 
\[G:=\op{ker}\left(\op{SL}_2(\Z_\ell)\rightarrow \op{SL}_2(\Z/\ell\Z)\right).\]
We find that for all $n$, \[G_n=\op{ker}\left(\op{SL}_2(\Z_\ell)\rightarrow \op{SL}_2(\Z/\ell^{n+1}\Z)\right)\]
and $G^{(n)}=G/G_n=\op{ker}\left(\op{SL}_2(\Z/\ell^{n+1}\Z)\rightarrow \op{SL}_2(\Z/\ell\Z)\right)$.
\par Proposition \ref{last prop} below shows that the group $G$ is uniform pro-$\ell$ with dimension $d=3$ (in the sense of Definition \ref{uniform def}). Furthermore, we shall obtain an explicit set of generators for $G$. First, we set up some preliminary notation. Let $\op{sl}_2(\Z/\ell\Z)$ denote the $3$-dimensional $\Z/\ell\Z$-vector space of $2\times 2$ square matrices with trace zero. Given a matrix $A\in \op{sl}_2(\Z/\ell\Z)$, let $\widetilde{A}$ be a lift of $A$ to a matrix with coefficients in $\Z_\ell$. The element $\ell^n A\in M_2(\Z/\ell^{n+1}\Z)$ is taken to be $\ell^n \widetilde{A}$, and is clearly independent of the choice of lift of $A$. Identify $G_n/G_{n+1}$ with the kernel of the mod-$\ell^n$ reduction map $\op{SL}_2(\Z/\ell^{n+1}\Z)\rightarrow \op{SL}_2(\Z/\ell^n\Z)$. The map \[\op{exp}_n:\op{sl}_2(\Z/\ell\Z)\rightarrow G_n/G_{n+1}\] sending $A\mapsto \op{exp}_n(A):=\op{I}+\ell^n A$ is an isomorphism of groups.
\begin{proposition}\label{last prop}
    With respect to notation above, the group $G$ is a uniform pro-$\ell$ group of dimension $d=3$ and is topologically generated by the matrices \[A_1:=\mtx{1}{\ell}{0}{1}, A_2:=\mtx{1+\ell}{0}{0}{(1+\ell)^{-1}}\text{ and }A_3:=\mtx{1}{0}{\ell}{1}.\]
\end{proposition}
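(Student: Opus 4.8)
The plan is to identify the lower central $\ell$-series $\{G_n\}$ with the congruence filtration of $\op{SL}_2(\Z_\ell)$ and then read off the three conditions of Definition \ref{uniform def} together with the dimension. Write $\Gamma(m) := \op{ker}\left(\op{SL}_2(\Z_\ell) \to \op{SL}_2(\Z/\ell^m\Z)\right)$, so that $G = \Gamma(1)$. The essential first step is the exponential--logarithm correspondence for $\ell$ odd: the series $\op{exp}(X) = \sum_{k\ge 0} X^k/k!$ and $\op{log}(\op{I}+Y) = \sum_{k\ge 1}(-1)^{k+1}Y^k/k$ converge on $\ell^m\op{sl}_2(\Z_\ell)$ and on $\Gamma(m)$ respectively for every $m\ge 1$ (the odd-prime valuation estimates $v_\ell(k!)\le (k-1)/(\ell-1)$ and $v_\ell(k)\le \log_\ell k$ guarantee this), and they are mutually inverse bijections between $\ell^m\op{sl}_2(\Z_\ell)$ and $\Gamma(m)$. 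The determinant-one condition matches the trace-zero condition through the identity $\det\circ\op{exp} = \op{exp}\circ\tr$, so $\op{exp}$ indeed lands in $\Gamma(m)$ and $\op{log}$ in $\ell^m\op{sl}_2(\Z_\ell)$.

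Using this correspondence I would compute the effect of $\ell$-th powers and of commutators on the filtration. From $g^\ell = \op{exp}(\ell\,\op{log} g)$ one gets that the set of $\ell$-th powers of $\Gamma(m)$ is exactly $\op{exp}(\ell^{m+1}\op{sl}_2(\Z_\ell)) = \Gamma(m+1)$; in particular it is already a subgroup, so $\Gamma(m)^\ell = \Gamma(m+1)$. A Baker--Campbell--Hausdorff (or direct matrix) computation gives $[\Gamma(a),\Gamma(b)]\subseteq \Gamma(a+b)$. An induction on $n$ then yields $G_n = \Gamma(n+1)$: assuming $G_n = \Gamma(n+1)$, the recursion $G_{n+1} = G_n^\ell[G_n,G]$ becomes $\Gamma(n+1)^\ell[\Gamma(n+1),\Gamma(1)]$, whose first factor equals $\Gamma(n+2)$ and whose second factor lies in $\Gamma(n+2)$, forcing $G_{n+1} = \Gamma(n+2)$.

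The three conditions of Definition \ref{uniform def} now follow. Since $\op{exp}_m$ identifies $\Gamma(m)/\Gamma(m+1)$ with $\op{sl}_2(\F_\ell)$, which has order $\ell^3$, one obtains $[G_n:G_{n+1}] = [\Gamma(n+1):\Gamma(n+2)] = \ell^3 = [G:G_1]$ for all $n$, giving condition (3) and simultaneously $[G:G_1] = \ell^d$ with $d=3$. Finiteness of the Frattini quotient $G/G_1$ makes $G$ topologically finitely generated, which is condition (1), and $[G,G] = [\Gamma(1),\Gamma(1)]\subseteq \Gamma(2) = G^\ell$ is condition (2) for $\ell$ odd.

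For the explicit generators I would invoke the Burnside basis theorem for pro-$\ell$ groups: $A_1,A_2,A_3$ topologically generate $G$ if and only if their images span the Frattini quotient $G/\Phi(G) = \Gamma(1)/\Gamma(2)\cong \op{sl}_2(\F_\ell)$. Under the identification $\op{I}+\ell A \mapsto A \bmod \ell$, and using $(1+\ell)^{-1}\equiv 1-\ell \pmod{\ell^2}$ for $A_2$, the images are $\mtx{0}{1}{0}{0}$, $\mtx{1}{0}{0}{-1}$, and $\mtx{0}{0}{1}{0}$, i.e. the standard $\op{sl}_2$-triple $e,h,f$, which is an $\F_\ell$-basis; hence the three elements generate the Frattini quotient and therefore topologically generate $G$. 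The main obstacle is the first step: setting up the $\op{log}/\op{exp}$ correspondence with the correct convergence for $\ell$ odd and deducing $\Gamma(m)^\ell = \Gamma(m+1)$. Everything afterward is formal, and it is precisely here that the hypothesis $\ell$ odd is indispensable, the analogous statements failing at $\ell=2$ where condition (2) must be replaced by $[G,G]\subseteq G^4$.
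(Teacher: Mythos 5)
Your proposal is correct, and it takes a genuinely different route from the paper's proof. The paper handles topological generation by a hands-on induction up the congruence filtration: assuming the images of $A_1,A_2,A_3$ generate $G^{(n)}$, it lifts the generators $B_n,C_n,D_n$ of $G_{n-1}/G_n$ into the image of the subgroup $H$ and corrects them using explicit commutator identities such as $A_2\tilde{B}_nA_2^{-1}\tilde{B}_n^{-1}=\mtx{1}{2\ell^{n+1}}{0}{1}$, with the oddness of $\ell$ entering only through the invertibility of $2$; conditions \eqref{uniform def 2} and \eqref{uniform def 3} of Definition \ref{uniform def} are then read off from the truncated exponentials $\op{exp}_n$, and the identification of the lower central $\ell$-series with the congruence kernels is asserted in the setup preceding the proposition rather than argued. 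You instead build the full $\ell$-adic $\op{exp}$/$\op{log}$ dictionary, prove $\Gamma(m)^\ell=\Gamma(m+1)$ and $[\Gamma(a),\Gamma(b)]\subseteq\Gamma(a+b)$ (in your notation $\Gamma(m)$ for the congruence kernels), deduce $G_n=\Gamma(n+1)$ by induction, and then obtain the generation statement in one stroke from the Burnside basis theorem, since the images of $A_1,A_2,A_3$ in $G/\Phi(G)\cong\op{sl}_2(\F_\ell)$ form the standard basis $e,h,f$. What your route buys: it is more conceptual, replaces the commutator induction by a single linear-algebra check in $\op{sl}_2(\F_\ell)$, and actually proves the filtration identification $G_n=\op{ker}\left(\op{SL}_2(\Z_\ell)\rightarrow \op{SL}_2(\Z/\ell^{n+1}\Z)\right)$ on which the uniformity conditions depend but which the paper merely states. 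What the paper's route buys: it is elementary and self-contained, requiring neither convergence estimates for $\ell$-adic power series nor the Frattini-quotient formalism, and its explicit commutator identities do double duty in verifying condition \eqref{uniform def 2} (where, incidentally, the paper's assertion that $A_1^\ell,A_2^\ell,A_3^\ell$ generate ``$G_2$'' should read $G_1$, consistent with your computation that these matrices lie in, and generate, $\Gamma(2)$).
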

\begin{proof}
Let us first verify the condition \eqref{uniform def 1} of Definition \ref{uniform def}, and show that $A_1, A_2, A_3$ is a set of generators. Let $H$ be the closure of the subgroup of $G$ generated by $A_1, A_2$ and $A_3$. Denote by $\pi_n:G\rightarrow G^{(n)}$ the natural quotient map. It suffices to show that $\pi_n(H)=G^{(n)}$ for all $n$. We prove this by induction on $n$.

\par Note that $G^{(1)}=G/G_1$ is generated by 
\[\op{exp}_1\left(\mtx{0}{1}{0}{0}\right), \op{exp}_1\left(\mtx{1}{0}{0}{-1}\right)\text{ and }\op{exp}_1\left(\mtx{0}{0}{1}{0}\right).\]The above matrices are $\pi_1(A_1), \pi_1(A_2)$ and $\pi_1(A_3)$ respectively. Therefore, $\pi_1(H)=G^{(1)}$.
\par For the inductive step, assume that $\pi_n(A_1), \pi_n(A_2)$ and $\pi_n(A_3)$ generate $G^{(n)}$. We show that $\pi_{n+1}(A_1), \pi_{n+1}(A_2)$ and $\pi_{n+1}(A_3)$ generate $G^{(n+1)}$. Identifying $G_{n-1}/G_{n}$ with $\op{sl}_2(\Z/\ell\Z)$, we find that $G_{n-1}/G_{n}$ is generated by the matrices\[\begin{split}&B_n:=\op{exp}_{n}\left(\mtx{0}{1}{0}{0}\right)=\mtx{1}{\ell^{n}}{0}{1}, \\
&C_n:=\op{exp}_{n}\left(\mtx{1}{0}{0}{-1}\right)=\mtx{1+\ell^{n}}{0}{0}{1-\ell^{n}}, \\
&D_n:=\op{exp}_{n}\left(\mtx{0}{1}{0}{1}\right)=\mtx{1}{\ell^{n}}{0}{1}\\
\end{split}\]
which are considered modulo $G_{n}$. Note that since $\pi_{n}(H)=G^{(n)}$, the matrices $B_n$, $C_n$ and $D_n$ are contained in $\pi_n(H)$. Therefore, we may choose lifts $\tilde{B}_n, \tilde{C}_n, \tilde{D}_n$ of $B_n$, $C_n$ and $D_n$ respectively to $\pi_{n+1}(H)$. The following relations hold in $G^{(n+1)}$
\[\begin{split}& A_2 \tilde{B}_n A_2^{-1} \tilde{B}_n^{-1}=\mtx{1}{2\ell^{n+1}}{0}{1}\\
& A_2 \tilde{D}_n A_2^{-1} \tilde{D}_n^{-1}=\mtx{1}{0}{-2\ell^{n+1}}{1}\\
& A_1 \tilde{D}_n A_1^{-1} \tilde{D}_n^{-1}=\mtx{1+\ell^{n+1}}{0}{0}{1-\ell^{n+1}},\\
\end{split}\]
see the proof of \cite[Lemma 2.8]{ray2021constructing}. The above matrices are therefore contained in $\pi_{n+1}(H)$. Since $\ell$ is assumed to be odd, we find that they generate $G_n/G_{n+1}=\op{exp}_n\left(\op{sl}_2(\Z/\ell\Z)\right)$. It follows that $\pi_{n+1}(H)$ contains $G_n/G_{n+1}$. Since $\pi_n(H)=G/G_n$ by assumption, and $\pi_{n+1}(H)$ contains $G_n/G_{n+1}$ it follows that $\pi_{n+1}(H)=G^{(n+1)}$. This completes the inductive step. We conclude that $H=G$, and therefore, $G$ is topologically generated by $A_1, A_2$ and $A_3$. In particular, \eqref{uniform def 1} of Definition \ref{uniform def} is satisfied. 
\par The same arguments as above show that the matrices
\[\begin{split}& A_1^\ell=\mtx{1}{\ell^2}{0}{1},\\
& A_2^{\ell}=\mtx{(1+\ell^2-\frac{1}{2}\ell^2\dots)}{0}{0}{(1-\ell^2-\frac{1}{2}\ell^2\dots)},\\
& A_3^\ell=\mtx{1}{0}{\ell^2}{1}
\end{split}\]
generate $G_2$. Thus the property \eqref{uniform def 2} of Definition \ref{uniform def} is satisfied. Finally, since the exponential map 
\[\op{exp}_n: \op{sl}_2(\Z/\ell\Z)\rightarrow G_n/G_{n+1}\] is seen to be an isomorphism for all $n$, it follows that the property \eqref{uniform def 3} is satisfied as well.
\end{proof}

Let $X$ be the bouquet with $4$ undirected edges. Let $s_1, \dots, s_4$ denote the edges in $S$, and $\alpha:S\rightarrow G$ denote the map taking $s_i\mapsto A_i$ for $i\leq 3$ and $s_4\mapsto 1$. Let $\beta:S\rightarrow \Gamma$ denote the map taking $s_i\mapsto 0$ for $i\leq 3$ and $s_4\mapsto 1$. Then, the map $\gamma:S\rightarrow \G=G\times \Gamma$ maps $s_1, \dots, s_4$ to a set of generators, moreover, all graphs $Y_{n, m}$ are connected since $\gamma_{n,m}(S)$ generates $G^{(n)} \times \Gamma^{(m)}$ (cf. Theorem \ref{connectedness}). Consider the tower 
\[X=X_0\leftarrow X_1 \leftarrow \dots\leftarrow X_n\leftarrow \dots ,\]
where $X_n$ is taken to be $X(\Gamma^{(n)}, S, \beta_{/n})$. We see that the characteristic series is 
$$f_{X,\beta}(T) = -T^2+T^3-T^4+\dots,$$ 
and hence, $\mu=0$ and $\lambda=1$.  Recall that $\mu_n$ and $\lambda_n$ are the $\mu$ and $\lambda$-invariants of the tower $\Y_n$. According to the formula \eqref{lastthm eqn} of Theorem \ref{lastthm}, we find that $\mu_n=0$ for all $n$, and that $\lambda_n=2\ell^{3n}-1$ for all $n$.
\bibliographystyle{alpha}
\bibliography{references}
\end{document}